\def\thesection{\arabic{section}}
\def\theequation{\thesection.\arabic{equation}}
\newcommand{\ds} {\displaystyle}
\newcommand{\e}{\epsilon}
\newcommand{\pa} {\partial}
\newcommand{\al} {\alpha}
\newcommand{\ba} {\beta}
\newcommand{\ga} {\gamma}
\newcommand{\Om} {\Omega}
\newcommand{\ra} {\rightarrow}
\newcommand{\ov}{\overline}
\newcommand{\de} {\delta}
\newcommand{\la} {\lambda}
\newcommand{\La} {\Lambda}
\newcommand{\noi} {\noindent}
\newcommand{\na} {\nabla}
\newcommand{\ul} {\underline}
\newcommand{\mb} {\mathbb}
\newcommand{\lra} {\longrightarrow}
\def\theequation{\@arabic{\c@section}.\@arabic{\c@equation}}
\def\QED{\hfill {$\square$}\goodbreak \medskip}
\newtheorem{Theorem}{Theorem}[section]
\newtheorem{Lemma}[Theorem]{Lemma}
\newtheorem{Proposition}[Theorem]{Proposition}
\begin{document}
{\vspace{0.01in}
\title
{Singular elliptic problems with unbalanced growth and critical exponent}

\author{ {\bf Deepak Kumar$\,^{1,}$\footnote{e-mail: {\tt deepak.kr0894@gmail.com}},  Vicen\c tiu D. R\u adulescu$\,^{2,3,}$\footnote{e-mail: {\tt radulescu@inf.ucv.ro}},
		   K. Sreenadh$\,^{1,}$\footnote{e-mail: {\tt sreenadh@maths.iitd.ac.in}}} \\ $^1\,$Department of Mathematics, Indian Institute of Technology Delhi,\\
	Hauz Khaz, New Delhi-110016, India\\
$^2\,$Faculty of Applied Mathematics, AGH University of Science and Technology,\\
 al. Mickiewicza 30, 30-059 Krak\'ow, Poland \\
 $^3\,$Department of Mathematics, University of Craiova, 200585 Craiova, Romania}

\date{}

\maketitle

\begin{abstract}
In this article, we study the existence and multiplicity of solutions of the following $(p,q)$-Laplace equation with singular nonlinearity:
\begin{equation*}
\left\{\begin{array}{rllll}
-\Delta_{p}u-\ba\Delta_{q}u & = \la u^{-\de}+ u^{r-1}, \  u>0, \  \text{ in } \Om \\ u&=0 \quad \text{ on } \pa\Om,
\end{array}
\right.
\end{equation*}
 where $\Om$ is a bounded domain in $\mathbb{R}^n$ with smooth boundary, $1<  q< p<r \leq p^{*}$, where $p^{*}=\ds \frac{np}{n-p}$, $0<\de< 1$, $n> p$ and $\la,\, \ba>0$ are  parameters. We prove existence, multiplicity and regularity of weak solutions of $(P_\la)$ for suitable range of $\la$. We also prove the global existence result for problem $(P_\la)$.

\medskip

\noi \textbf{Key words:}  $(p,q)$-Laplace equation, double-phase energy, Sobolev critical exponent, singular problem.

\medskip

\noi \textit{2010 Mathematics Subject Classification:} 35B65, 35J35, 35J75, 35J92

\end{abstract}

\section{Introduction}
 \noindent In this paper, we are concerned with the study of a nonlinear problem whose features are the following:\\ (i) the presence of several differential operators with different growth, which generates a {\it double phase} associated energy;\\ (ii) the reaction combines the multiple effects generated by a singular term and a nonlinearity with critical growth;\\ (iii) we establish a global existence property, which describes an exhaustive bifurcation picture. Roughly speaking, this result shows that the problem has a solution if and only if the positive parameter associated to the singular nonlinearity is sufficiently small. Summarizing, this paper
 is concerned with the refined qualitative and bifurcation analysis of solutions for a class of {\it singular} problems driven by differential operators with {\it unbalanced growth}.

 We recall in what follows some of the outstanding contributions of the Italian school  to the study of unbalanced integral functionals and double phase problems.  We first refer to the pioneering contributions of Marcellini \cite{marce1,marce2,marce3} who studied lower semicontinuity and regularity properties of minimizers of certain quasiconvex integrals. Problems of this type arise in nonlinear elasticity and are connected with the deformation of an elastic body, cf. Ball \cite{ball1,ball2}. We also refer to Fusco and Sbordone \cite{fusco} for the study of regularity of minima of anisotropic integrals.

In order to recall the roots of double phase problems, let us assume that $\Omega$ is a bounded domain in ${\mathbb R}^n$ ($N\geq 2$) with smooth boundary. If $u:\Omega\to{\mathbb R}^n$ is the displacement and if $Du$ is the $n\times n$  matrix of the deformation gradient, then the total energy can be represented by an integral of the type
\begin{equation}\label{paolo}I(u)=\int_{\Omega} f(x,Du(x))dx,\end{equation}
where the energy function $f=f(x,\xi):\Omega\times{\mathbb R}^{n\times n}\to{\mathbb R}$ is quasiconvex with respect to $\xi$. One of the simplest examples considered by Ball is given by functions $f$ of the type
$$f(\xi)=g(\xi)+h({\rm det}\,\xi),$$
where ${\rm det}\,\xi$ is the determinant of the $n\times n$ matrix $\xi$, and $g$, $h$ are nonnegative convex functions, which satisfy the growth conditions
$$g(\xi)\geq c_1\,|\xi|^p;\quad\lim_{t\to+\infty}h(t)=+\infty,$$
where $c_1$ is a positive constant and $1<p<n$. The condition $p< n$ is necessary to study the existence of equilibrium solutions with cavities, that is, minima of the integral \eqref{paolo} that are discontinuous at one point where a cavity forms; in fact, every $u$ with finite energy belongs to the Sobolev space $W^{1,p}(\Omega,{\mathbb R}^n)$, and thus it is a continuous function if $p>n$. In accordance with these problems arising in nonlinear elasticity, Marcellini \cite{marce1,marce2} considered continuous functions $f=f(x,u)$ with {\it unbalanced growth} that satisfy
$$c_1\,|u|^p\leq |f(x,u)|\leq c_2\,(1+|u|^q)\quad\mbox{for all}\ (x,u)\in\Omega\times{\mathbb R},$$
where $c_1$, $c_2$ are positive constants and $1\leq p\leq q$. Regularity and existence of solutions of elliptic equations with $p,q$--growth conditions were studied in \cite{marce2}.

The study of non-autonomous functionals characterized by the fact that the energy density changes its ellipticity and growth properties according to the point has been continued in a series of remarkable papers by Mingione {\it et al.} \cite{baroni0}--\cite{baroni}, \cite{colombo0}--\cite{colombo1}. These contributions are in relationship with the works of Zhikov \cite{zhikov1}, in order to describe the
behavior of phenomena arising in nonlinear
elasticity.
In fact, Zhikov intended to provide models for strongly anisotropic materials in the contect of homogenisation.
 In particular, Zhikov considered the following model of
functional  in relationship to the Lavrentiev phenomenon:
$$
{\mathcal P}_{p,q}(u) :=\int_{\Omega} (|\nabla u|^p+a(x)|\nabla u|^q)dx,\quad 0\leq a(x)\leq L,\ 1<p<q.
$$
In this functional, the modulating coefficient $a(x)$ dictates the geometry of the composite made by
two differential materials, with hardening exponents $p$ and $q$, respectively.

The functional ${\mathcal P}_{p,q}$ falls in the realm of the so-called functionals with
nonstandard growth conditions of $(p, q)$--type, according to Marcellini's terminology. This is a functional of the type in \eqref{paolo}, where the energy density satisfies
$$|\xi|^p\leq f(x,\xi)\leq  |\xi|^q+1,\quad 1\leq p\leq q.$$

Another significant model example of a functional with $(p,q)$--growth studied by Mingione {\it et al.} is given by
$$u\mapsto \int_{\Omega} |\nabla u|^p\log (1+|\nabla u|)dx,\quad p\geq 1,$$
which is a logarithmic perturbation of the $p$-Dirichlet energy.

General models with $(p,q)$-growth in the context of
geometrically constrained problems have been recently studied by De Filippis \cite{cristina}. This seems to be the first work dealing with $(p,q)$-conditions with manifold constraint. Refined regularity results
are proved
in \cite{cristina},
by using an approximation technique relying on estimates obtained through a careful use of difference quotients.

 The purpose of this paper is to study the existence and multiplicity of solutions of the following $(p,q)$-Laplacian problem
\begin{equation*}
 ( P_\la) \;\left\{\begin{array}{rllll}
   -\Delta_{p}u-\ba\Delta_{q}u & = \la u^{-\de}+ u^{r-1}, \;  u> 0 \text{ in } \Om \\ u&=0 \quad \text{ on } \partial\Om,
\end{array}
\right.
\end{equation*}
\noi where $\Om$ is a bounded domain in $\mb R^n$ with smooth boundary, $1<  q< p<r \leq p^{*}$, with $p^{*}=\ds \frac{np}{n-p}, \ 0< \de\le 1, n> p$ and $\la,\ba>0$ are real parameters. Here, $\Delta_{p}$ is the $p$-Laplace operator, defined as
$\Delta_{p} u= \nabla\cdot(|\nabla u|^{p-2}\nabla u)$.

The differential operator $A_{p,q}:=-\Delta_{p}-\ba\Delta_{q}$ is known as $(p,q)$-Laplacian, which arises from a wide range of important applications such as biophysics \cite{fife}, plasma physics \cite{wilhel}, reaction-diffusion \cite{cherfils}. For more details on applications readers are referred to survey article \cite{maranorcn}.

The study of elliptic equations with singular nonlinearities has drawn the attention of many researchers since the pioneering work of Crandall, Rabinowitz and Tartar \cite{crandall}, where authors studied purely singular problem associated to $-\Delta$ with Dirichlet boundary condition. More generally, the equation of type
  \begin{equation}\label{eqb1}
  	 -\Delta u=\la\;a(x)u^{-\de}+b(x)u^{r-1}, \ \ u>0 \ \mbox{ in }\Om; \ u=0 \ \mbox{ on }\pa\Om,
  \end{equation}
  has been studied in a large number of papers, for instance Coclite and Palmieri \cite{coclite} obtained global existence result for \eqref{eqb1}. Using  Nehari manifold method Yijing, Shaoping and Yiming \cite{yijing} proved existence of two solutions of \eqref{eqb1} when $0<\de<1$ and $r<2^*$. The critical case was dealt by Haitao \cite{haitao} and Hirano, Saccon and Shioji \cite{hirano}. In \cite{haitao},  for $a=1=b$ and $0<\de<1$, Haitao proved global existence of solutions using Perron's method and saddle point theorem while authors in \cite{hirano} used Nehari manifold technique to prove the existence of two solutions.  Adimurthi and Giacomoni \cite{adimurthi} considered problem \eqref{eqb1} for the case $n=2$, $0<\de<3$ with Trudinger-Moser type critical nonlinearities. Ghergu and R\u adulescu \cite{ghergu1, ghergu2} considered singular elliptic equations with gradient term, while Dupaigne, Ghergu and R\u adulescu \cite{ghergu0} studied singular Lane-Emden-Fowler equations with convection and singular potential. For a thorough analysis of semilinear elliptic equations with singular nonlinearities we refer to the monograph by Ghergu and R\u adulescu \cite{ghergu}.

  For general $p$, Giacomoni, Schindler and Tak\'a\v{c} \cite{giacomoni} studied the following singular problem
    \begin{equation*}
    	-\Delta_p u=\la\;u^{-\de}+u^{r-1}, \ \ u>0 \ \mbox{ in }\Om; \ u=0 \ \mbox{ on }\pa\Om,
    \end{equation*}
  where $0<\de<1$ and $1<p<r\le p^*$. In this work authors proved the existence of multiple solutions in $C^{1,\al}(\ov\Om)$ using variational method developed in \cite{garcia} and \cite{figueiredo}. Here, a multiplicity result was obtained for all $p>1$ in the subcritical case and for $p\in\big(\frac{2n}{n+2},2\big]\cup \big(\frac{3n}{n+3},3\big)$ in the critical case. For more work on singular quasilinear elliptic equations we refer to \cite{goyal, mohammed}.

The $(p,q)$-Laplace equation with concave-convex type nonlinearities has been studied by many researchers. For instance,  Yin and Yang \cite{yin} considered the problem
 \begin{align*}
 	-\Delta_{p}u-\Delta_{q} u=|u|^{p^*-2}u+\theta V(x)|u|^{r-2}u+\la f(x,u) \ \mbox{ in }\Om, \ \ u=0 \ \mbox{ on }\pa\Om,
 \end{align*}
 where  $1<r<q<p<n$ and $f(x,u)$ is a subcritical perturbation, to prove multiplicity of solutions using Lusternik-Schnirelman theory, while Gasi\'nski and Papageorgiou \cite{gasinski} obtained the existence of two positive solutions of the problem with concave nonlinearity and carath\'eodory perturbation having subcritical growth (which need not satisfy Ambrosetti-Rabinowitz condition) for the case $2\le q\le p<\infty$. Subsequently,  Marano et. al  \cite{marano} studied this   problem with  Carath\'eodory function having critical growth. Using critical point theory with truncation arguments and comparison principle authors also proved bifurcation type result. For $(p,q)$-Laplacian problem with concave-convex nonlinearities in $\mb R^n$ we refer \cite{huang}.

Regarding the regularity results for weak solutions of $(p,q)$-Laplacian problem we cite the work of  He and Li \cite{He} who proved that weak solutions of
\begin{align*}
	-\Delta_{p}u-\Delta_{q} u=f(x) \ \mbox{ in } \mb R^n
\end{align*}
belong to $L^\infty_\text{loc}(\mb R^n)\cap C^{1,\al}_\text{loc}(\mb R^n)$ for some $\al\in (0,1)$ if $f(x)\in L^{\infty}_{loc}(\mathbb R^n).$ Here, the authors extended their results to equations with general nonlinearity $f(x,u)$ having critical growth with respect to $p$. Furthermore, Baroni, Colombo and Mingione \cite{baroni} proved $C^{1,\al}_\text{loc}(\Om)$ regularity result for minimizers of general double phase equation. For more details on regularity results, interested readers may refer to \cite{colombo,liberman}.

\section{Main results}
Inspired by the above mentioned works, we study in this paper $(p,q)$-Laplacian problem involving singular nonlinearity. Following the approach of \cite{haitao}, which uses Perron's method to obtain a weak solution of singular problem between a sub and super solution, we prove global (for all $\la, \ba$) existence result for $(P_\la)$.  Using Stampacchia's truncation argument and Moser iteration technique for $(p,q)$-Laplacian problem we prove that the weak solutions of $(P_\la)$ are in $L^\infty(\Om)$ and applying some properties established in \cite[Theorem 1]{He} we show the following regularity theorem.
\begin{Theorem}\label{thm4}
	Each weak solution $u$ of problem $(P_\la)$ belongs to $L^\infty(\Om)\cap C^{1,s}_\text{loc}(\Om)$, for some $s\in(0,1)$. Moreover, there exists $\e_\la>0$ such that $u\ge\e_\la\hat{\phi}$ in $\Om$.
\end{Theorem}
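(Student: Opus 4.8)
The plan is to establish the three assertions in turn: global boundedness $u\in L^\infty(\Om)$, the interior lower bound $u\ge\e_\la\hat\phi$, and then the interior $C^{1,s}$ regularity, the last following from \cite[Theorem 1]{He} once the first two are available.

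\emph{Step 1: $u\in L^\infty(\Om)$.} For an upper bound the singular term is harmless: on each superlevel set $\{u>k\}$ with $k\ge1$ one has $u^{-\de}\le1$, so $\la u^{-\de}+u^{r-1}\le\la+u^{r-1}$ there. Testing the weak formulation with nonnegative functions that are nondecreasing in $u$ (the truncated powers $(u-k)^+$ for Stampacchia's argument, or $u\,u_M^{p(\ga-1)}$ with $u_M=\min\{u,M\}$ for Moser's) makes the $\ba\Delta_q$-contribution nonnegative, so it may be dropped and the estimate reduces to the one for the critical $p$-Laplacian. When $r<p^*$ a direct iteration on the level sets yields $u\in L^\infty(\Om)$. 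The delicate case is $r=p^*$: writing the critical nonlinearity as $u^{p^*-1}=u^{p^*-p}\,u^{p-1}$, the coefficient $u^{p^*-p}$ lies in the critical space $L^{n/p}(\Om)$ because $u\in W_0^{1,p}(\Om)\hookrightarrow L^{p^*}(\Om)$; the absolute continuity of $\int_{\{u>M\}}u^{p^*}$ then lets one absorb the critical term into the Sobolev (gradient) term on the region where $u$ is large, raising the integrability from $L^{p^*}$ to $L^{p^*\ga}$ and, upon iterating, to $L^\infty(\Om)$. I expect this absorption to be the main technical obstacle.

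\emph{Step 2: the lower bound.} Since $u^{r-1}\ge0$, the solution is a weak supersolution of the purely singular problem $-\Delta_p w-\ba\Delta_q w=\la w^{-\de}$. I would then verify that $\e\hat\phi$ is a subsolution of the same problem for $\e>0$ small, by balancing the two scales: the left-hand side $A_{p,q}(\e\hat\phi)=\e^{p-1}(-\Delta_p\hat\phi)+\ba\,\e^{q-1}(-\Delta_q\hat\phi)$ is of order $\e^{q-1}$ (here $1<q<p$ is used), while the right-hand side $\la(\e\hat\phi)^{-\de}$ is of order $\e^{-\de}$ and, in addition, blows up like $d(x,\pa\Om)^{-\de}$ near $\pa\Om$; hence for $\e$ small the subsolution inequality holds both in the interior and near the boundary. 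Because the reaction $w\mapsto\la w^{-\de}$ is decreasing, the weak comparison principle applies without obstruction: testing $A_{p,q}(\e\hat\phi)-A_{p,q}u$ against $(\e\hat\phi-u)^+$, strict monotonicity of $A_{p,q}$ renders the left side nonnegative while the decreasing reaction renders the right side nonpositive, forcing $\big|\{\e\hat\phi>u\}\big|=0$, i.e.\ $u\ge\e_\la\hat\phi$.

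\emph{Step 3: interior $C^{1,s}$ regularity.} With both bounds in hand, fix a compact $K\Subset\Om$. The lower bound gives $u\ge\e_\la\min_K\hat\phi>0$ on $K$, so $u^{-\de}\in L^\infty(K)$, while $u\in L^\infty(\Om)$ gives $u^{r-1}\in L^\infty(\Om)$; therefore $f:=\la u^{-\de}+u^{r-1}\in L^\infty_\text{loc}(\Om)$. The equation then reads $-\Delta_p u-\ba\Delta_q u=f$ with $f\in L^\infty_\text{loc}(\Om)$, so \cite[Theorem 1]{He} yields $u\in C^{1,s}_\text{loc}(\Om)$ for some $s\in(0,1)$, which together with Steps 1 and 2 completes the proof.
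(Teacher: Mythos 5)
Your proposal is correct in outline, and Steps 1 and 3 follow essentially the paper's own path: the paper proves $u\in L^\infty(\Om)$ (Lemma \ref{lemm14}) by exactly your devices of dropping the (nonnegative) $q$-Laplacian contribution, taming the singular term by working on the set where $u\ge 1$ (it uses $(u-1)_+$), bootstrapping the integrability of the critical term (the paper quotes \cite[Theorem 2]{He} where you would run the Brezis--Kato absorption by hand) and finishing with Stampacchia's truncation lemma; the $C^{1,s}_\text{loc}$ statement is obtained, as you do, by observing that $f=\la u^{-\de}+u^{r-1}\in L^\infty_\text{loc}(\Om)$ once the two bounds are known and citing \cite[Theorem 1]{He} (Lemma \ref{lemm15}).

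Step 2 is where you genuinely diverge. The paper does not compare $u$ with $\e\hat{\phi}$ directly: it first solves the purely singular problem $(S_\la)$ by minimizing the strictly convex, coercive functional $\tilde{I_\la}$, shows the minimizer $\ul u_\la$ satisfies $\ul u_\la\ge\e_\la\hat{\phi}$ by a convexity/directional-derivative argument (Lemma \ref{lemm10}), and then proves a separate comparison lemma $\ul u_\la\le u$ for every supersolution $u$ of $(P_\la)$ (Lemma \ref{lemm18}), using a smoothed Heaviside function of $\ul u_\la-u$ as test function rather than the positive part. Your single weak-comparison step (monotone operator plus decreasing reaction tested against $(\e\hat{\phi}-u)^+$) is shorter and sound as a comparison argument, and the admissibility of the test function is covered by the paper's definition \eqref{eqbw} of weak solution. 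What the paper's detour buys is the existence and uniqueness of $\ul u_\la$, which is reused as the subsolution in the sub/supersolution scheme of Section 6; your route proves only what Theorem \ref{thm4} asserts.

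One caution, common to both arguments but more exposed in yours: $A_{p,q}$ is not homogeneous, so $A_{p,q}(\e\hat{\phi})=\e^{p-1}(-\Delta_p\hat{\phi})+\ba\,\e^{q-1}(-\Delta_q\hat{\phi})$, and your claim that this is ``of order $\e^{q-1}$'' does not follow from $-\Delta_p\hat{\phi}-\ba\Delta_q\hat{\phi}=\hat{\la}\hat{\phi}^{q-1}$ alone: one needs separate control (e.g.\ a sign) of $-\Delta_p\hat{\phi}$, which a priori is only a distribution. The paper's display \eqref{eqb61} makes the same identification, so this is not a gap relative to the paper's own standard, but it is the one point of your sketch that must be written out with care before the subsolution inequality for $\e\hat{\phi}$ can be asserted.
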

To prove our existence results, we use the Nehari manifold technique to obtain minimizers of the energy functional associated to $(P_\la)$ over some subsets of the Nehari manifold.  First we prove that these minimizers are in fact weak solutions of $(P_\la)$. Furthermore, by analyzing the energy levels and identifying the first critical level we  prove multiplicity results for the critical case for all $q<p\le p^*$ by choosing $\ba$ small. We also establish these results if $\ba>0$  and $p\in(2n/(n+2),3)$.

We denote by $\|\cdot\|_{L^m(\Om)}$ the norm on $L^m(\Om)$ by $\|\cdot\|_m$ for $1\le m\le\infty$.
Let $W^{1,p}_0(\Om)$ be the Sobolev space equipped with the norm $\|.\|$ given by
$\|u\|=\|\nabla u\|_p$ for all $u\in W^{1,p}_0(\Om)$ and $S$ be the Sobolev constant defined as
\begin{align*}
	S=\inf_{u\in W^{1,p}_0(\Om)\setminus\{0\}} \frac{\|u\|^p}{\|u\|_{p^*}^p}.
\end{align*}
We denote by $\la_1(q,\ba)>0$  the first eigenvalue of $-\beta \Delta_{q}$ with homogeneous Dirichlet boundary condition:
  \begin{align*}
  	-\beta \Delta_{q} u=\la u^{q-1}, \ \ u>0 \ \mbox{ in }\Om \ \text{ and }u=0 \ \mbox{ in }\pa\Om.
  \end{align*}
We say that $u\in W^{1,p}_0(\Om)$ is a weak solution of problem $(P_\la)$ if $u> 0$ a.e. in $\Om$ and
\begin{equation}\label{eqbw}
	\int_\Om (|\na u|^{p-2}\na u\na\phi-\ba|\na u|^{q-2}\na u\na\phi-\la u^{-\de}\phi-u^{r-1}\phi)dx=0 \quad \mbox{for all }\phi\in W^{1,p}_0(\Om).
\end{equation}
The Euler functional associated to the problem $(P_\la)$, $I_\la:W^{1,p}_0(\Om)\to \mb R$ is defined as
\begin{align*}
	I_\la(u)=\frac{1}{p}\int_\Om |\na u|^p+\ba\frac{1}{q}\int_{\Om}|\na u|^q-\frac{\la}{1-\de}\int_\Om |u|^{1-\de}~dx-\frac{1}{r}\int_{\Om}|u|^r~dx.
\end{align*}
Set $W^{1,p}_0(\Om)_+:=\{u\in W^{1,p}_0(\Om): u\ge 0 \ \mbox{a.e. in }\Om\}.$
We show the following existence and multiplicity results:
\begin{Theorem}\label{thm1}
	Let $r < p^{*}$. Then there exists $\la_*>0 $ such that  for all $\ba>0$ and $\la \in(0, \la_*)$, $(P_\la)$ has at least two solutions.
\end{Theorem}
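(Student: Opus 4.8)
The plan is to use the Nehari manifold method with a fibering-map decomposition, as announced in the introduction. I would work on the positive cone $W^{1,p}_0(\Om)_+$ and set $\ld I_\la'(u),u\rd=\|\na u\|_p^p+\ba\|\na u\|_q^q-\la\int_\Om|u|^{1-\de}-\int_\Om|u|^r$, so that the Nehari manifold is $\mc N_\la=\{u\in W^{1,p}_0(\Om)\setminus\{0\}:\ld I_\la'(u),u\rd=0\}$. For fixed $u\neq 0$ I would study the fibering map $\phi_u(t)=I_\la(tu)$ and split $\mc N_\la$ according to the sign of $\phi_u''(1)$ into $\mc N_\la^+$ (local minima of the fibres), $\mc N_\la^0$, and $\mc N_\la^-$ (local maxima). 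Note that here no smallness of $\ba$ is needed: the restriction $r<p^*$ is what makes the whole scheme compact.

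First I would carry out the fibering analysis. Writing $\phi_u'(t)=t^{-\de}\big(m_u(t)-\la\int_\Om|u|^{1-\de}\big)$ with $m_u(t)=t^{p-1+\de}\|\na u\|_p^p+\ba\,t^{q-1+\de}\|\na u\|_q^q-t^{r-1+\de}\int_\Om|u|^r$, observe that since $q<p<r$ one has $m_u(0)=0$, while $m_u$ is positive and increasing near $0$ (the exponent $q-1+\de$ is the smallest and its coefficient $\ba$ is positive), attains a unique maximum $M_u>0$, and then decreases to $-\infty$. Hence whenever $\la\int_\Om|u|^{1-\de}<M_u$ the equation $\phi_u'(t)=0$ has exactly two roots $0<t^+(u)<t^-(u)$, and since the sign of $\phi_u''$ at a Nehari point equals the sign of $m_u'$, we get $t^+(u)u\in\mc N_\la^+$ and $t^-(u)u\in\mc N_\la^-$. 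Using the subcritical Sobolev embedding $W^{1,p}_0(\Om)\hookrightarrow L^r(\Om)$ and a scaling argument on the unit sphere, I would then prove the key separation lemma: there exists $\la_*>0$ such that for all $\la\in(0,\la_*)$ one has $\mc N_\la^0=\emptyset$, so that $\mc N_\la=\mc N_\la^+\cup\mc N_\la^-$ with both pieces nonempty.

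Next I would set up the two minimization problems $\theta^\pm=\inf_{\mc N_\la^\pm}I_\la$. On $\mc N_\la$ the Nehari relation gives $I_\la(u)=\big(\tfrac1p-\tfrac1r\big)\|\na u\|_p^p+\ba\big(\tfrac1q-\tfrac1r\big)\|\na u\|_q^q-\la\big(\tfrac1{1-\de}-\tfrac1r\big)\int_\Om|u|^{1-\de}$; since $q,p<r$ the leading two terms are positive and dominate the sublinear ($1-\de<1$) singular term, so $I_\la$ is coercive and bounded below there. For $\theta^+$ I expect $\theta^+<0$, and a minimizing sequence, bounded by coercivity, converges strongly along a subsequence by compactness of $W^{1,p}_0(\Om)\hookrightarrow L^r(\Om)$ and $\hookrightarrow L^{1-\de}(\Om)$; the limit $u_1\in\mc N_\la^+$ realizes $\theta^+$. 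The same scheme on $\mc N_\la^-$ produces $u_2$, and since $\mc N_\la^+\cap\mc N_\la^-=\emptyset$ the two are automatically distinct. One must only rule out the minimizing sequences drifting to $0$ or to $\mc N_\la^0=\emptyset$, which follows from uniform lower bounds on $\|u\|$ on each piece.

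The main obstacle, as always for singular problems, is showing that the constrained minimizers $u_1,u_2$ are genuine weak solutions in the sense of \eqref{eqbw}. The functional is not $C^1$ because of the term $\int_\Om|u|^{1-\de}$, so the Lagrange-multiplier argument is not directly available. I would instead test with nonnegative variations: for $\phi\in W^{1,p}_0(\Om)_+$ and small $\e>0$, using that a suitable multiple of $u_i+\e\phi$ lands on $\mc N_\la$ and that $u_i$ is a minimizer, I would derive $\ld I_\la'(u_i),\phi\rd\ge 0$ after letting $\e\to 0$ (Fatou applied to the singular term). Crucially, the lower bound $u_i\ge\e_\la\hat\phi$ furnished by Theorem \ref{thm4} guarantees that $u_i^{-\de}$ is integrable against test functions, so the singular integral is finite. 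A standard argument replacing $\phi$ by general directions, together with a density/approximation step, upgrades the inequality to the equality \eqref{eqbw}; positivity and $u_1\neq u_2$ then complete the proof.
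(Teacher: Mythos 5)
Your overall architecture is exactly the paper's: fibering decomposition of the Nehari manifold, $N_\la^0=\emptyset$ for $\la<\la_*$ via a Sobolev-embedding estimate, minimization of $I_\la$ separately over $N_\la^+$ and $N_\la^-$, and one-sided variations to show the constrained minimizers solve \eqref{eqbw}. Two steps, however, are asserted rather than proved, and one of them as stated would fail.

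First, the claim that a bounded minimizing sequence ``converges strongly along a subsequence by compactness of $W^{1,p}_0(\Om)\hookrightarrow L^r(\Om)$'' is not correct as written: compactness of the embedding gives strong convergence of the lower-order terms only, and the weak limit $u$ of $\{u_k\}\subset N_\la^\pm$ need not lie on $N_\la$ if gradient norm is lost in the limit. The paper's Proposition \ref{prop1} devotes real work to this point: assuming $\|\na(u_k-u)\|_p\to a_1>0$, it uses the Brezis--Lieb lemma to get $J_u'(1)+a_1^p+a_2^q=0$, then compares the values of the perturbed fibering function $f(t)=J_u(t)+t^pa_1^p/p+t^qa_2^q/q$ at $1$ and at the two critical points $\ul s<\ov s$ of $J_u$ to reach a contradiction. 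You need this (or an equivalent) argument; it is not free. Second, your appeal to Theorem \ref{thm4} to guarantee $u_i^{-\de}\phi\in L^1(\Om)$ is circular: the bound $u\ge\e_\la\hat\phi$ in Theorem \ref{thm4} is proved (via Lemmas \ref{lemm10} and \ref{lemm18}) only for functions already known to be weak solutions or supersolutions of $(P_\la)$, which is precisely what you are trying to establish for $u_i$ at this stage. The paper avoids this by extracting $u^{-\de}w\in L^1(\Om)$ directly from the minimization property: the difference quotients $\e^{-1}\big(|u+\e w|^{1-\de}-|u|^{1-\de}\big)/(1-\de)$ increase monotonically as $\e\downarrow 0$, and the monotone convergence theorem, combined with the finiteness of the remaining terms in $\e^{-1}\big(I_\la(u+\e w)-I_\la(u)\big)\ge 0$, yields both the integrability and the variational inequality of Lemma \ref{lemm4} without any pointwise lower bound on $u$. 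With these two repairs your outline coincides with the paper's proof.
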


 \begin{Theorem}\label{thm2}
 	Let $r=p^*$, then there exists $\La>0$ such that for all $\la\in(0,\La)$ and $\ba>0$ problem $(P_\la)$ admits at least one solution.
 \end{Theorem}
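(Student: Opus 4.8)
The plan is to realize the solution as a global minimizer of $I_\la$ over the ``$+$'' part $\mc N_\la^+$ of the Nehari manifold, on which the energy stays strictly negative and is therefore well below the threshold at which Sobolev compactness is lost at the critical exponent. For $u\in W^{1,p}_0(\Om)_+\setminus\{0\}$ set $\psi_u(t)=I_\la(tu)$ for $t>0$, so that
\begin{align*}
	\psi_u'(t)=t^{p-1}\|\na u\|_p^p+\ba\, t^{q-1}\|\na u\|_q^q-\la\, t^{-\de}\int_\Om u^{1-\de}\,dx-t^{r-1}\int_\Om u^{r}\,dx,
\end{align*}
and $tu\in\mc N_\la$ precisely when $\psi_u'(t)=0$. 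Since $0<1-\de<q<p<r=p^{*}$, the map $\psi_u$ leaves the origin through negative values and tends to $-\infty$; analysing $t^{\de}\psi_u'(t)$ and using the Sobolev constant $S$ to control the critical term, I expect to produce a threshold $\La>0$ (depending only on $S,|\Om|,\de,p,q,r$) so that for $0<\la<\La$ every such $u$ gives exactly two positive critical points $t^+(u)<t^-(u)$, with $\psi_u''(t^+)>0$, $\psi_u''(t^-)<0$ and $\psi_u(t^+(u))<0$. This shows $\mc N_\la^0=\emptyset$, splits $\mc N_\la=\mc N_\la^+\sqcup\mc N_\la^-$, and yields $\theta^+:=\inf_{\mc N_\la^+}I_\la<0$ with $I_\la$ coercive on $\mc N_\la^+$.

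Next I would extract a minimizing sequence. Applying Ekeland's variational principle on $\ov{\mc N_\la^+}$ gives $(u_k)\subset\mc N_\la^+$ with $I_\la(u_k)\to\theta^+$; testing the resulting Lagrange-multiplier relation with $u_k$ and using $\mc N_\la^0=\emptyset$ (so the constraint derivative stays bounded away from $0$ along the sequence) forces the multiplier to vanish, whence $\ld I_\la'(u_k),\phi\rd\to0$ in the appropriate generalized sense. Coercivity gives boundedness, so along a subsequence $u_k\rightharpoonup u_0$ in $W^{1,p}_0(\Om)$, $u_k\to u_0$ in $L^{m}(\Om)$ for $m<p^{*}$ and a.e. The heart of the proof is upgrading this to strong convergence. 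Writing $v_k=u_k-u_0$, the $q$-term is a compact (weakly continuous) perturbation, while the Brezis--Lieb lemma splits the critical term $\int_\Om|u_k|^{p^{*}}$; combining this with the concentration estimate governed by $S$ and the fact that $\theta^+<0<\tfrac1n S^{n/p}$ rules out concentration of $|\na v_k|^{p}$, so $v_k\to0$ strongly. Then $u_0\in\mc N_\la^+$ and $I_\la(u_0)=\theta^+$. I expect this compactness step at $r=p^{*}$ to be the main obstacle, with the negativity of $\theta^+$ being exactly what saves it.

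Finally, to see that the minimizer solves $(P_\la)$, I would exploit that $u_0\in\mc N_\la^+$ is a point with $\psi_{u_0}''>0$, so it is a genuine interior constrained minimizer: for $\phi\in W^{1,p}_0(\Om)$ with $\phi\ge0$ and small $\e>0$ the competitor $u_0+\e\phi$ can be projected back onto $\mc N_\la^+$ (the projection $t_\e\to1$ as $\e\to0$ by the fibering structure), and letting $\e\to0$ yields the variational inequality $\ld I_\la'(u_0),\phi\rd\ge0$ for all $\phi\ge0$. A comparison argument with a suitable subsolution furnishes the lower bound $u_0\ge\e_\la\hat\phi>0$ (as in Theorem \ref{thm4}), which guarantees $\int_\Om u_0^{-\de}\phi\,dx<\infty$ and legitimizes testing; inserting test functions of the form $(u_0+\e\psi)^{+}$ for arbitrary $\psi\in W^{1,p}_0(\Om)$ and passing to the limit (using Fatou on the singular term) then promotes the inequality to the equality \eqref{eqbw} for all $\phi\in W^{1,p}_0(\Om)$. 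Thus $u_0$ is a positive weak solution of $(P_\la)$ for every $\la\in(0,\La)$ and every $\ba>0$.
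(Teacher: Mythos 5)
Your overall architecture (minimize $I_\la$ on $N_\la^+$, use $\theta_\la^+<0$ and coercivity, Brezis--Lieb plus the Sobolev constant to restore compactness, then the perturbation/projection argument with monotone convergence on the singular term to show the constrained minimizer solves \eqref{eqbw}) is the same as the paper's, and your final step is essentially Lemmas \ref{lemm3}--\ref{lemm4} and Theorem \ref{thmA}. The genuine gap is in the middle: you propose to run Ekeland's variational principle on $\ov{N_\la^+}$ and extract a Lagrange multiplier to conclude $\ld I_\la'(u_k),\phi\rd\to 0$. But $I_\la$ is not Gateaux differentiable on $W^{1,p}_0(\Om)$ because of the term $\int_\Om|u|^{1-\de}$ with $0<\de<1$: the would-be derivative $\phi\mapsto\la\int_\Om u^{-\de}\phi$ is not a bounded functional (indeed, establishing that $u^{-\de}w\in L^1(\Om)$ for the minimizer is itself the content of Lemma \ref{lemm4} and requires the minimality), so neither the Lagrange-multiplier rule nor the claim ``the constraint derivative stays bounded away from $0$'' is available. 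Without a bona fide Palais--Smale sequence you do not get the identity $l_1^p+\ba l_2^q=d^{p^*}$ for the lost mass; from the constraint $J'_{u_k}(1)=0$ alone one only obtains $l_1^p+\ba l_2^q-d^{p^*}=-J'_{u_0}(1)$, whose sign is unknown, and then the case $\tfrac{l_1^p}{p}-\tfrac{d^{p^*}}{p^*}<0$ cannot be excluded by the inequality $\theta_\la^+<0<\tfrac1n S^{n/p}$. (Even granting a PS sequence, you would still need a lower bound on $I_\la(u_0)$ for the weak limit, e.g.\ Lemma \ref{lemm7}, before the energy-level argument closes.)

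The paper closes this case by a different mechanism, which is why its threshold is $\La=\min\{\la_*,\tilde\la_*\}$ rather than an energy-level condition: by Lemma \ref{lemm1}(i) the set $N_\la^+$ shrinks as $\la\downarrow 0$, and $\tilde\la_*$ is chosen so that $\sup\{\|u\|^p:u\in N_\la^+\}\le(p^*/p)^{p/(p^*-p)}S^{p^*/(p^*-p)}$; if concentration with $\tfrac{l_1^p}{p}<\tfrac{d^{p^*}}{p^*}$ occurred, the relation $Sd^p\le l_1^p$ would force $l_1^p$ to exceed this bound, which is impossible since $l_1^p\le\limsup\|u_k\|^p$. The remaining cases are handled purely by fibering-map comparisons applied to $f(t)=J_{u_\la}(t)+\tfrac{l_1^p}{p}t^p+\ba\tfrac{l_2^q}{q}t^q-\tfrac{d^{p^*}}{p^*}t^{p^*}$, with no differentiability of $I_\la$ ever invoked. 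If you want to keep your Ekeland route you would have to first prove a generalized PS property for this singular functional; otherwise you should replace that step by the paper's norm-bound argument.
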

In the critical case, we have the following multiplicity results for ''small $\ba$'' and with no further restriction on $q$:
\begin{Theorem}\label{thm3}
   Let $r=p^*$, then there exist positive constants $\ba_*, \La_0$ and $\ba_0$ such that problem $(P_\la)$ has at least two solutions in each of the following cases:\begin{enumerate}
   \item[(i)] for all $\la\in(0,\La)$ and $\ba\in(0,\ba_*)$, when $\frac{2n}{n+2}<p<3$,
   \item[(ii)] for all $\la\in(0,\La_{0})$ and $\ba\in(0, \ba_{0})$, when $p\in(1,\frac{2n}{n+2}\big]\cup[3,n)$. \end{enumerate}
	\end{Theorem}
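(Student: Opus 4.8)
The plan is to realize the two solutions as minimizers of $I_\la$ over the two nontrivial components of the Nehari manifold $\mathcal{N}_\la=\{u\in W^{1,p}_0(\Om)\setminus\{0\}:\ld I_\la'(u),u\rd=0\}$. For $u\in W^{1,p}_0(\Om)_+$ I would study the fibering map $\phi_u(t)=I_\la(tu)$. Since the singular term contributes $-\frac{\la t^{1-\de}}{1-\de}\int_\Om u^{1-\de}$ with $1-\de\in(0,1)$, one has $\phi_u(t)<0$ for small $t$, while $\phi_u(t)\to-\infty$ as $t\to\infty$ because $r>p>q$. Writing $\phi_u'(t)=t^{-\de}\big(\psi_u(t)-\la\int_\Om u^{1-\de}\big)$ with $\psi_u(t)=t^{p-1+\de}\|\na u\|_p^p+\ba t^{q-1+\de}\|\na u\|_q^q-t^{r-1+\de}\int_\Om u^{r}$, the function $\psi_u$ vanishes at $0$, is positive for small $t$, and tends to $-\infty$; for $\la$ below an explicit threshold it meets the level $\la\int_\Om u^{1-\de}$ at exactly two points $t^+(u)<t^-(u)$. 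Because $\mathrm{sgn}\,\phi_u''(t)=\mathrm{sgn}\,\psi_u'(t)$ at a critical point, these produce the splitting $\mathcal{N}_\la=\mathcal{N}_\la^+\cup\mathcal{N}_\la^0\cup\mathcal{N}_\la^-$ according to the sign of $\phi_u''(1)$, and I would first check that $\mathcal{N}_\la^0=\emptyset$ for $\la$ small, so that every constrained minimizer is an unconstrained critical point.

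For the first solution I would minimize $I_\la$ over $\mathcal{N}_\la^+$, where the energy is negative and coercive; a minimizing sequence is bounded and its level lies strictly below the first non-compactness level, so no Sobolev bubble can form and a weak limit $u_1$ survives. The only delicate point is to verify that $u_1$ is a weak solution in the sense of \eqref{eqbw}: using the lower bound $u_1\ge\e_\la\hat\phi$ from Theorem \ref{thm4}, the quantity $u_1^{-\de}\phi$ is integrable, and passing to the limit in the singular integral is handled by a monotonicity/truncation argument together with Fatou's lemma. This step does not feel the critical exponent and works for all admissible $p,q,\ba$.

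The second solution, obtained by minimizing $I_\la$ over $\mathcal{N}_\la^-$, is the heart of the matter, since here $I_\la$ satisfies only a \emph{local} Palais--Smale condition: a $(PS)_c$ sequence is relatively compact provided $c$ lies below a threshold of the form $c_1+\frac1n S^{n/p}$, where $c_1=\inf_{\mathcal{N}_\la^+}I_\la<0$ and $S$ is the $p$-Sobolev constant (the lower-order $q$-term does not enter the threshold). Establishing this amounts to a Brezis--Nirenberg type concentration analysis for the $(p,q)$-operator: decomposing a bounded $(PS)_c$ sequence, one shows that the only obstruction to strong convergence is the bubbling of a Sobolev instanton carrying energy $\frac1n S^{n/p}$, which is excluded once
\[
\inf_{\mathcal{N}_\la^-}I_\la<c_1+\frac1n S^{n/p}.
\]

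To prove this strict inequality I would test along the fibre through $u_1+t\,u_\e$, where $u_\e$ are the suitably truncated and normalized Aubin--Talenti extremals for $W^{1,p}_0(\Om)\hookrightarrow L^{p^*}(\Om)$, and expand $I_\la(u_1+t\,u_\e)$ as $\e\to0$. This expansion is the main obstacle, and it is precisely where the restrictions on $p$ and the smallness of $\ba$ enter. The cross terms produced by $\int_\Om|u_1+t\,u_\e|^{p^*}$ have an order in $\e$ that changes according to the sign of $p^*-2$, i.e. according to whether $p$ exceeds $\frac{2n}{n+2}$, while the algebraic inequalities controlling the $p$-Dirichlet energy split between the cases $p<2$ and $p\ge2$; the window $\frac{2n}{n+2}<p<3$ of case (i) is exactly where the gain from the cross term and from the singular perturbation dominates the error, whereas outside it (case (ii)) the strict inequality is recovered only after also shrinking $\la$ and $\ba$, which forces the smaller thresholds $\La_0,\ba_0$. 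The smallness of $\ba$ serves to keep the positive contribution $\frac{\ba}{q}\|\na u_\e\|_q^q$ of the $q$-Laplacian below the available margin, so that the $p$-Sobolev level $\frac1n S^{n/p}$ remains the governing threshold; this is what fixes $\ba_*$ (resp. $\ba_0$). Once the strict inequality holds, the local $(PS)$ condition yields a minimizer $u_2\in\mathcal{N}_\la^-$, distinct from $u_1$ because $\mathcal{N}_\la^+\cap\mathcal{N}_\la^-=\emptyset$, and the regularity and positivity of both solutions follow from Theorem \ref{thm4}.
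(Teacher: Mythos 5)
Your overall architecture for case (i) matches the paper's: Nehari splitting with $N_\la^0=\emptyset$ for small $\la$, a minimizer on $N_\la^+$ as the first solution, and a second minimizer on $N_\la^-$ obtained by verifying $\theta_\la^-<\theta_\la^++\frac1n S^{n/p}$ through an expansion of $I_\la(u_\la+tu_\e)$ along truncated Aubin--Talenti bubbles, with $\ba$ small so the $q$-term is absorbed into the error (the paper takes $\ba=\e^{\al_3}$ with $\al_3>\frac{n-p}{p}$). You also correctly locate the source of the restriction $\frac{2n}{n+2}<p<3$ in the Garc\'ia-Azorero--Peral expansions of $\int|\na(u_\la+tu_\e)|^p$ and $\int(u_\la+tu_\e)^{p^*}$.

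There is, however, a genuine gap in your treatment of case (ii). You propose to keep testing along the translated path $u_1+tu_\e$ and to recover the strict inequality ``after also shrinking $\la$ and $\ba$.'' But the whole reason case (ii) is separated out is that the pointwise expansions \eqref{eqb78}--\eqref{eqb79} underlying that path are only available for $p\in(\frac{2n}{n+2},3)$; no amount of shrinking $\la$ or $\ba$ restores them for $p\le\frac{2n}{n+2}$ or $p\ge3$. The paper's resolution is structurally different: it abandons the translated path, estimates $\sup_{t\ge0}I_\la(tu_\e)$ alone (Lemma \ref{lemm9}), and compensates for the loss of the term $I_\la(u_\la)$ by first proving the uniform lower bound $I_\la(u)\ge-D_0\la^{p/(p-1+\de)}$ on all of $N_\la$ (Lemma \ref{lemm7}), so that it suffices to show $\sup_{t\ge0}I_\la(tu_\e)<\frac1n S^{n/p}-D_0\la^{p/(p-1+\de)}$. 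That inequality is then obtained by balancing $C_3\e^{\frac{n-p}{p-1}}$ against the singular term's contribution $\int_{B_\mu}U_\e^{1-\de}$, whose rate depends on the sign of $n-\frac{n-p}{p-1}(1-\de)$, and by tying $\e$ to $\la$ (e.g.\ $\e=\la^{\frac{p}{p-1+\de}\cdot\frac{p-1}{n-p}}$); this is what forces the smaller thresholds $\La_0,\ba_0$. Your proposal contains neither the switch of test path nor the lower-bound lemma, and without them case (ii) does not close. A secondary point: you invoke the bound $u_1\ge\e_\la\hat\phi$ from Theorem \ref{thm4} to justify integrability of $u_1^{-\de}\phi$ before $u_1$ is known to be a weak solution, which is circular as stated; the paper instead derives $u^{-\de}w\in L^1(\Om)$ for the $N_\la^\pm$ minimizers directly from the one-sided variational inequality via monotone convergence (Lemma \ref{lemm4}), and only afterwards applies the comparison with $\ul u_\la$.
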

 We also show the following multiplicity result for "all $\ba>0$" but with restriction on $p$ and $q$:
  \begin{Theorem}\label{thm6}
Let $p\in\big(\frac{2n}{n+2},3)$ and $r=p^*$, then there exists at least two solutions of $(P_\la)$ for all $\la\in(0,\La)$ and $\ba>0$ in each of the following cases:
	\begin{enumerate}
		\item[(1)]  $\max\{p-1,1\}<q<\frac{n(p-1)}{n-1}$,
		\item[(2)] $\frac{n(p-1)}{n-1}<q<\frac{n(p-1)+p}{n}$.
	\end{enumerate}
\end{Theorem}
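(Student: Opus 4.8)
The plan is to produce the two solutions as minimizers of $I_\la$ over the two natural pieces of the Nehari manifold. Writing the fibering map $\phi_u(t)=I_\la(tu)$ for $u\in W^{1,p}_0(\Om)_+\setminus\{0\}$ and $t>0$, the Nehari set $N_\la=\{u:\phi_u'(1)=0\}$ splits as $N_\la^+\cup N_\la^0\cup N_\la^-$ according to the sign of $\phi_u''(1)$. Since $1-\de<q<p<p^*$, for each $u$ the map $\phi_u$ starts from $\phi_u(0)=0$ decreasing like $-t^{1-\de}$, so that for $\la$ small one checks $N_\la^0=\{0\}$ and $N_\la^\pm$ are genuine $C^1$ manifolds on which $I_\la$ is bounded below. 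The first solution $u_0$ is the minimizer over $N_\la^+$ (the local minimum of the fibering map, with $I_\la(u_0)<0$); this is the solution produced in Theorem \ref{thm2}, and it comes with the lower bound $u_0\ge\e_\la\hat\phi$ and the $L^\infty\cap C^{1,s}_\text{loc}$ regularity of Theorem \ref{thm4}, which are needed to control the singular term along the test-function families below.

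For the second solution I would minimize $I_\la$ over $N_\la^-$, setting $c^-=\inf_{N_\la^-}I_\la>0$. The only difficulty is compactness: since $r=p^*$, a minimizing (Palais--Smale) sequence may lose mass through a concentrating bubble. By the concentration--compactness principle for the $p$-Laplacian, one bubble carries energy $\frac1n S^{n/p}$ (recall $\frac1p-\frac1{p^*}=\frac1n$), so any such sequence is relatively compact provided its energy stays strictly below the threshold $I_\la(u_0)+\frac1n S^{n/p}$. Thus it suffices to prove
\[
c^-<I_\la(u_0)+\tfrac1n S^{n/p}.
\]
Once this holds the infimum is attained at some $u_1\in N_\la^-$, which is shown to be a weak solution by testing with $(u_1+t\varphi)^+$ and using the sign of the singular term exactly as in the proof of Theorem \ref{thm2}; since $N_\la^+$ and $N_\la^-$ are disjoint, $u_1\neq u_0$.

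The heart of the matter is the energy estimate, and this is where the hypotheses on $p$ and $q$ enter. I would test with $u_0+tU_\e$, where $U_\e(x)=\e^{-(n-p)/p}U(x/\e)$ are the Aubin--Talenti instantons normalised so that $\int|\na U_\e|^p=\int U_\e^{p^*}=S^{n/p}$, and estimate $\sup_{t\ge0}I_\la(u_0+tU_\e)$. Because $u_0$ solves $(P_\la)$ and $p\in(\tfrac{2n}{n+2},3)$ (so that $p^*>2$ and the relevant algebraic expansions of $(u_0+tU_\e)^{p^*}$ and $|\na u_0+t\na U_\e|^p$ are valid), the $p$-Laplacian and critical parts reproduce the Brezis--Nirenberg expansion: the truncation losses are of order $\e^{(n-p)/(p-1)}$, while the decisive interaction term $\int_\Om u_0\,U_\e^{p^*-1}$ is of order $\e^{(n-p)/p}$ and lowers the energy, so the net gain over $\frac1n S^{n/p}$ is positive of order $\e^{(n-p)/p}$. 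The extra term to control is $\ba\,\frac1q\int_\Om|\na U_\e|^q$, and a scaling computation gives
\[
\int_\Om|\na U_\e|^q\,dx=O\!\left(\e^{\,n(p-q)/p}\right)\ \text{if }q>\tfrac{n(p-1)}{n-1},
\qquad
\int_\Om|\na U_\e|^q\,dx=O\!\left(\e^{\,q(n-p)/(p(p-1))}\right)\ \text{if }q<\tfrac{n(p-1)}{n-1},
\]
the dichotomy reflecting whether $|\na U_\e|^q\sim|x|^{-q(n-1)/(p-1)}$ is integrable at infinity. Comparing exponents with the gain order $\frac{n-p}{p}$, one finds $n(p-q)/p>\frac{n-p}{p}\iff q<\frac{n(p-1)+p}{n}$ in case (2), and $q(n-p)/(p(p-1))>\frac{n-p}{p}\iff q>p-1$ in case (1); these are precisely the stated $q$-windows. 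Hence the $q$-contribution is genuinely higher order, and for $\e$ small the strict inequality holds \emph{for every fixed} $\ba>0$, rather than only for $\ba$ small as in Theorem \ref{thm3}.

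The step I expect to be the main obstacle is exactly this uniform-in-$\ba$ expansion of $\sup_t I_\la(u_0+tU_\e)$: one must keep careful track of all cross terms between the regular part $u_0$ and the concentrating $U_\e$ in the critical term, the singular term $\int(u_0+tU_\e)^{1-\de}$, and the $q$-gradient term, and verify that the positive $q$-Laplacian correction is dominated by the negative critical gain with $\e$ chosen after $\ba$. The positivity of the gain (which relies on $p^*>2$ and $u_0\ge\e_\la\hat\phi>0$) and the precise matching of the two exponents are what confine the result to the windows (1)--(2) for $q$ and to $p\in(\tfrac{2n}{n+2},3)$.
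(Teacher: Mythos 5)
Your proposal is correct and follows essentially the same route as the paper: both produce the two solutions as minimizers over $N_\la^+$ and $N_\la^-$, and both obtain the second one by showing $\theta_\la^-<I_\la(u_\la)+\frac1n S^{n/p}$ via the test family $u_\la+tu_\e$, with the decisive point being that $\ba\int_\Om|\na u_\e|^q$ decays like $\e^{q(n-p)/(p(p-1))}$ for $q<\frac{n(p-1)}{n-1}$ and like $\e^{n(p-q)/p}$ for $q>\frac{n(p-1)}{n-1}$, both strictly faster than the gain $\e^{(n-p)/p}$ from $\int_\Om u_\la u_\e^{p^*-1}$ exactly on the stated $q$-windows (this is the paper's Lemma \ref{lemm19} with estimate \eqref{eqb84}). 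The only ingredient you flag but do not carry out --- the cross term $\int_\Om|\na u_\la|^{q-\nu}|\na u_\e|^\nu$ in the expansion of the $q$-gradient energy --- is handled in the paper by the same exponent comparison after choosing $\nu$ in the corresponding window, so it introduces no new obstruction.
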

Finally, we have the following global existence result for $(P_\la)$.
\begin{Theorem}\label{thm5}
	There exists $\La^*>0$ such that problem $(P_\la)$ has a solution for all $\la\in(0,\La^*]$ and no solution if $\la>\La^*$.
\end{Theorem}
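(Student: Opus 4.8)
The plan is to set $\La^*:=\sup\{\la>0:(P_\la)\ \text{admits a weak solution}\}$ and to prove that the solvability set is exactly the interval $(0,\La^*]$. I would first establish a sweeping (monotonicity) property: if $(P_\mu)$ has a solution $u_\mu$ for some $\mu>0$, then for each $\la\in(0,\mu)$ one has $\la u_\mu^{-\de}+u_\mu^{r-1}\le\mu u_\mu^{-\de}+u_\mu^{r-1}$, so $u_\mu$ is a weak supersolution of $(P_\la)$. Pairing it with the small subsolution $\ul u=\e\hat\phi$ ($\e>0$ small), which sits below $u_\mu$ and is a genuine subsolution of $(P_\la)$ thanks to the barrier estimate behind Theorem \ref{thm4}, Perron's method in the spirit of \cite{haitao} yields a weak solution of $(P_\la)$ sandwiched between $\ul u$ and $u_\mu$. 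Hence $(0,\mu)$ lies in the solvability set, which is therefore an interval; since Theorem \ref{thm2} provides a solution for every small $\la$, the set is nonempty and $\La^*>0$.

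Next I would show $\La^*<\infty$. Because $r>p>q$, minimizing in $t$ gives
\[
C(\la):=\inf_{t>0}\frac{\la t^{-\de}+t^{r-1}}{t^{p-1}}=c_0\,\la^{\frac{r-p}{r-1+\de}}\longrightarrow\infty\quad\text{as }\la\to\infty ,
\]
so every weak solution $u$ of $(P_\la)$ obeys $\la u^{-\de}+u^{r-1}\ge C(\la)u^{p-1}$ a.e.; that is, $u$ is a positive supersolution of $-\De_pw-\ba\De_qw=C(\la)w^{p-1}$. Testing \eqref{eqbw} with $\varphi_1^{\,p}/u^{p-1}\ge0$, where $\varphi_1>0$ is the first eigenfunction of $-\De_p$ with eigenvalue $\la_1(p)$, and using the nonlinear Picone inequality, the $p$-part is controlled by $\la_1(p)\int_\Om\varphi_1^{\,p}$, which for $C(\la)>\la_1(p)$ contradicts the lower bound coming from the right-hand side. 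Thus no solution exists for large $\la$ and $\La^*<\infty$.

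To include the endpoint I would take $\la_n\uparrow\La^*$ and let $u_n$ be the minimal solutions produced above; by comparison $u_n$ increases in $n$ and stays above a fixed barrier $\e_0\hat\phi>0$ given by Theorem \ref{thm4}, so $u_n\uparrow u^*$ a.e. with $u^*\ge\e_0\hat\phi>0$. A uniform $W^{1,p}_0(\Om)$ bound follows by testing each equation with $u_n$ and forming $I_{\la_n}(u_n)-\frac1r\ld I_{\la_n}'(u_n),u_n\rd$, which annihilates the (critical) term $\int u_n^r$ and leaves
\[
\Big(\tfrac1p-\tfrac1r\Big)\|u_n\|^p\le I_{\la_n}(u_n)+\la_n\Big(\tfrac1{1-\de}-\tfrac1r\Big)\int_\Om u_n^{1-\de}\,dx\le C\,\|u_n\|^{1-\de},
\]
using $I_{\la_n}(u_n)\le0$ for the minimal (local minimizer) solution together with $\la_n\le\La^*$; since $1-\de<p$ this bounds $\|u_n\|$. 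Then $u_n\rightharpoonup u^*$ in $W^{1,p}_0(\Om)$, the singular term passes to the limit by dominated convergence (dominated by $\La^*(\e_0\hat\phi)^{-\de}|\phi|\in L^1(\Om)$ since $\de<1$ and $\hat\phi\sim d(x)$), and monotonicity upgrades $u_n\to u^*$ to strong $L^r(\Om)$ convergence, so $\int_\Om u_n^{r-1}\phi\to\int_\Om (u^*)^{r-1}\phi$. Hence $u^*$ solves $(P_{\La^*})$.

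The main obstacle is the finiteness $\La^*<\infty$. Because the $(p,q)$-operator is non-homogeneous it possesses no single principal eigenvalue, so the Picone comparison must genuinely control the $\ba\De_q$ contribution; the clean remedy is to split $\la t^{-\de}+t^{r-1}\ge\frac12C_p(\la)t^{p-1}+\frac12C_q(\la)t^{q-1}$ with $C_p(\la),C_q(\la)\to\infty$ and compare each operator against its own principal eigenvalue ($\la_1(p)$ and $\la_1(q,\ba)$) by means of a $(p,q)$-Picone inequality, or, alternatively, to run a comparison/bootstrap argument based on the $(p,q)$-torsion function and the superlinear growth $r>p$. By contrast, the endpoint existence is \emph{not} obstructed even in the critical case $r=p^*$, precisely because the approximating sequence is monotone: this yields strong $L^r$ convergence and lets the limit be identified through monotone and dominated convergence rather than through a Palais--Smale or concentration-compactness analysis.
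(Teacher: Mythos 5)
Your overall architecture is the paper's: define $\La^*$ as the supremum of solvable parameters, show solvability propagates downward by sandwiching a Perron-type solution between a small subsolution and the solution at a larger parameter (the paper uses the purely singular solution $\ul u_\la$ of $(S_\la)$ rather than $\e\hat\phi$, but this is cosmetic; see Lemmas \ref{lemm18}, \ref{lemm12} and \ref{lemm17}), prove $\La^*<\infty$ by comparison with a homogeneous eigenvalue problem, and recover the endpoint $\la=\La^*$ via a uniform energy bound. However, two steps are not actually closed.

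The finiteness of $\La^*$ is the genuine gap, and you say so yourself. Picone against $\la_1(p)$ alone does not control $\ba\int_\Om|\na u|^{q-2}\na u\cdot\na(\varphi_1^{p}/u^{p-1})$; the mixed $(p,q)$-Picone inequalities that exist for $q<p$ produce a factor of the type $(\varphi_1/u)^{p-q}$ whose uniform control near $\pa\Om$, as well as the admissibility of $\varphi_1^{p}/u^{p-1}$ as a test function, require boundary gradient bounds that the paper never establishes (Theorem \ref{thm4} gives only $C^{1,s}_{\text{loc}}(\Om)$). Your proposed splitting into two Picone comparisons is a plausible repair but is left entirely unexecuted. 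The paper's Lemma \ref{lemm16} avoids Picone altogether by dropping to the \emph{lowest} homogeneity: since $q<p<r$, $\inf_{t>0}(\la t^{-\de}+t^{r-1})/t^{q-1}\to\infty$ as $\la\to\infty$, so for $\la$ large any solution $u_\la$ is a supersolution of $-\De_p w-\ba\De_q w=(\la_1(q,\ba)-\e)w^{q-1}$; pairing it with the subsolution $\varrho\hat\phi$ and running a monotone iteration yields a positive solution of that problem with parameter strictly below $\la_1(q,\ba)$, which is impossible (testing with the solution itself forces $\|\na w\|_p^p<0$). Working at the $q$-homogeneous level is exactly what neutralizes the non-homogeneity of $-\De_p-\ba\De_q$ that obstructs your route.

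At the endpoint you invoke \emph{minimal} solutions that ``increase by comparison.'' No minimal solution is constructed, and for this superlinear singular right-hand side there is no comparison principle giving monotonicity of solutions in $\la$; the paper's Lemma \ref{lemm18} only compares solutions against $\ul u_\la$. This claim is fortunately dispensable: your energy estimate $\big(\tfrac1p-\tfrac1r\big)\|u_k\|^p\le C\|u_k\|^{1-\de}$, obtained from $I_{\la_k}(u_k)<0$ together with the Nehari identity, is precisely what the paper uses to bound $\{u_k\}$ in $W^{1,p}_0(\Om)$, after which one passes to the limit using the uniform lower barrier $u_k\ge\ul u_{\la_1}$ for the singular term and a.e.\ convergence for the critical term, with no monotonicity needed.
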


The analysis developed in this paper shows that the singular nonlinearity $u^{-\delta}$ (with $0<\delta<1$) in problem $(P_\la)$ can be replaced by a general nonlinearity $g\in C^1(0,\infty)$ which is positive, decreasing, satisfying $\lim_{t\rightarrow 0^+}g(t)=+\infty$ and such that
\begin{equation}\label{ko}\int_0^1g(t)dt<+\infty.\end{equation}
We observe that \eqref{ko} implies the following Keller-Osserman type condition around the origin:
\begin{equation}\label{ko1}
\int_0^1\left(\int_0^t g(s)ds\right)^{-1/2}dt<+\infty.\end{equation}
As proved by B\'enilan, Brezis and Crandall \cite{beni}, condition \eqref{ko1} is equivalent to the {\it property of compact support}, that is, for every $h\in L^1({\mathbb R}^n)$ with compact support, there exists a unique $u\in W^{1,1}({\mathbb R}^n)$ with compact support such that $\Delta u\in L^1({\mathbb R}^n)$ and
$$-\Delta u+g(u)=h\quad\mbox{in}\ {\mathbb R}^n.$$

The paper is organized as follows: In Section 2, we prove existence result for purely singular problem associated with $(P_\la)$ and prove Theorem \ref{thm4}. In Section 3, we study the fibering maps and Nehari manifold associated with $(P_\la)$. We prove some technical results here. In Section 4, we prove Theorem \ref{thm1}. In Section 5, we prove Theorem \ref{thm2}, Theorem \ref{thm3} and Theorem \ref{thm6}. In Section 6, we give proof of Theorem \ref{thm5}.

\section{A regularity result}
In this Section we study regularity of weak solutions of problem $(P_\la)$ and obtain a weak solution of purely singular problem associated to $(P_\la)$.
  \begin{Lemma}\label{lemm13}\cite{stampacchia}
	Let $\psi$ be a function such that \begin{enumerate}
		\item[(1)] $\psi(t)\ge 0$,
		\item[(2)] $\psi$ is non-increasing,
		\item[(3)] if $h>k>k_0$, then $\psi(h)\le \frac{C}{(h-k)^\rho}(\psi(k))^\ga$, for some $\ga>1$.
	\end{enumerate}
	Then,
	  $\psi(k_0+d)=0$, where $d^\rho:=C\; 2^\frac{\rho\ga}{\ga-1}\big(\psi(k_0)\big)^{\ga-1}$.
\end{Lemma}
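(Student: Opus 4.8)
The plan is to run the classical Stampacchia geometric-iteration argument on a sequence of levels that increases to $k_0 + d$. First I would introduce
\[
k_s = k_0 + d\left(1 - 2^{-s}\right), \qquad s = 0, 1, 2, \ldots,
\]
so that $k_0$ is the starting level, the sequence is strictly increasing, $k_s \ra k_0 + d$ as $s \ra \infty$, and the consecutive gap is $k_{s+1} - k_s = d\,2^{-(s+1)}$. The whole argument will be to show that $\psi(k_s)$ decays geometrically to $0$ along this sequence.

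The core step is to feed these levels into hypothesis (3). Applying it with $h = k_{s+1}$ and $k = k_s$ gives
\[
\psi(k_{s+1}) \le \frac{C}{(k_{s+1}-k_s)^\rho}\big(\psi(k_s)\big)^\ga = \frac{C\,2^{(s+1)\rho}}{d^\rho}\big(\psi(k_s)\big)^\ga .
\]
I would then prove by induction the decay estimate
\[
\psi(k_s) \le \frac{\psi(k_0)}{\mu^{s}}, \qquad \mu := 2^{\rho/(\ga-1)} > 1 .
\]
The base case $s=0$ is trivial. In the inductive step one substitutes the hypothesis $\psi(k_s)\le \psi(k_0)\mu^{-s}$ into the recursion above; a direct bookkeeping of the exponents of $2$ shows that the step closes precisely when $d^\rho \ge C\,2^{\rho\ga/(\ga-1)}\big(\psi(k_0)\big)^{\ga-1}$, which is exactly the value of $d$ fixed in the statement.

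Finally, since $\rho>0$ and $\ga>1$ give $\mu>1$, the estimate yields $\psi(k_s) \ra 0$. As $\psi$ is non-increasing and $k_s < k_0 + d$, monotonicity and nonnegativity give $0 \le \psi(k_0+d) \le \psi(k_s) \ra 0$, whence $\psi(k_0 + d) = 0$. The main obstacle, and really the only delicate point, is the exponent bookkeeping: one must check that the doubling factor $2^{(s+1)\rho}$ produced by the shrinking gaps is exactly absorbed by the contraction coming from the power $\ga>1$, which is what pins down both $\mu = 2^{\rho/(\ga-1)}$ and the stated $d$. A minor technicality is that (3) is written for $h > k > k_0$, so the first application ($s=0$, where $k=k_0$) should be justified either by admitting $k=k_0$ in (3) or by running the iteration from $k_0+\eta$ and letting $\eta \ra 0^+$ through the monotonicity of $\psi$.
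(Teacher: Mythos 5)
Your proof is correct: it is the classical Stampacchia iteration, which is precisely the argument in the reference the paper cites for this lemma (the paper itself gives no proof, only the citation), and your exponent bookkeeping for $\mu=2^{\rho/(\ga-1)}$ and the stated value of $d$ checks out. Your remark about the first step ($k=k_0$ not being strictly greater than $k_0$ as hypothesis (3) literally requires) is a fair catch of a small imprecision in the statement, and either of your fixes is acceptable; in the paper's application ($\psi(j)=|\Om_j|$ with $k_0=0$) this is immaterial.
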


\begin{Lemma}\label{lemm14}
	Each weak solution $u$ of $(P_\la)$ belongs to $L^{\infty}(\Om)$.
\end{Lemma}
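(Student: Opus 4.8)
The plan is to follow the two-step strategy announced in the introduction: Stampacchia's truncation (Lemma~\ref{lemm13}) handles the subcritical range $r<p^*$, while a Moser iteration is needed for the critical exponent $r=p^*$. In both cases the first move is to dispose of the singular term. Testing \eqref{eqbw} with $(u-k)^+\in W^{1,p}_0(\Om)$ and writing $A_k:=\{x\in\Om:\,u(x)>k\}$ yields
\[
\int_{A_k}|\na u|^p+\ba\int_{A_k}|\na u|^q=\int_{A_k}\big(\la u^{-\de}+u^{r-1}\big)(u-k)^+\,dx,
\]
so in particular $\|\na(u-k)^+\|_p^p$ is controlled by the right-hand side (the $q$-term, $q<p$, enters with the same sign and only helps). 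For $k\ge 1$ one has $u^{-\de}\le k^{-\de}\le 1$ on $A_k$, so the singular nonlinearity contributes a bounded, lower-order term there; the whole difficulty is concentrated in the power $u^{r-1}$.

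For $r<p^*$, combining the above with the Sobolev inequality $\|(u-k)^+\|_{p^*}^p\le S^{-1}\|\na(u-k)^+\|_p^p$ and splitting $u^{r-1}\le C\big(k^{r-1}+((u-k)^+)^{r-1}\big)$ on $A_k$, Hölder's inequality converts the $L^{p^*}$-norm into a power of $\psi(k):=|A_k|$. Because $r-1<p^*-1$ there is a strictly positive gap in the exponents, and passing to higher levels $h>k$ produces the recursive inequality
\[
\psi(h)\le \frac{C}{(h-k)^{\rho}}\,\psi(k)^{\gamma}\qquad(\gamma>1).
\]
Lemma~\ref{lemm13} then forces $\psi(k_0+d)=0$, that is $u\le k_0+d$ a.e.\ in $\Om$, giving $u\in L^\infty(\Om)$.

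For $r=p^*$ the gain $\gamma>1$ above collapses to $\gamma=1$ and Stampacchia's lemma fails, so one runs a Moser iteration exploiting that $u\in W^{1,p}_0(\Om)\hookrightarrow L^{p^*}(\Om)$. For $L>0$, $\gamma>1$ set $u_L:=\min\{u,L\}$ and test \eqref{eqbw} with $\phi=u\,u_L^{p(\gamma-1)}\in W^{1,p}_0(\Om)$. Discarding the nonnegative $q$-contribution and the favorable cross term arising from $\na u_L$, and using Sobolev, one is led, after writing $u^{p^*}=u^{p^*-p}\,(u\,u_L^{\gamma-1})^p$, to an estimate of the form
\[
\big\|u\,u_L^{\gamma-1}\big\|_{p^*}^p\le C\gamma^p\int_\Om u^{p^*-p}\big(u\,u_L^{\gamma-1}\big)^p\,dx+(\text{lower order}).
\]
The weight $u^{p^*-p}$ is then split over $\{u\le M\}$ and $\{u>M\}$: the first piece is bounded by $M^{p^*-p}$, while the second, by Hölder, carries the factor $\big(\int_{\{u>M\}}u^{p^*}\big)^{(p^*-p)/p^*}$, which tends to $0$ as $M\to\infty$ since $u^{p^*}\in L^1(\Om)$; choosing $M$ large absorbs this piece into the left-hand side. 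This upgrades $u$ from $L^{p^*}(\Om)$ to $L^{s_0}(\Om)$ for some $s_0>p^*$; thereafter $u^{p^*-p}$ lies in a subcritical Lebesgue class, Hölder genuinely improves the exponent at each step, and iterating over $\gamma_n=(p^*/p)^n$ (with $L\to\infty$) bootstraps $u$ to $L^\infty(\Om)$.

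The main obstacle is the critical case $r=p^*$: there the a priori integrability $u\in L^{p^*}(\Om)$ is exactly borderline, so no single application of Hölder and Sobolev closes the estimate. The decisive point is to set up the first Moser step so that the critical weight $u^{p^*-p}$, via the splitting above and the smallness of $\int_{\{u>M\}}u^{p^*}$, can be partly absorbed into the left-hand side — and then to verify that the constants $C\gamma_n^p$ stay controllable along the iteration, so that the limiting $L^\infty$ bound is finite.
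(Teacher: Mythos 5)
Your critical-case argument (the Brezis--Kato splitting of the weight $u^{p^*-p}$ over $\{u\le M\}$ and $\{u>M\}$, absorption of the small piece, then iteration) is sound and is essentially the mechanism the paper imports from \cite[Theorem 2]{He}. The paper's own route is a hybrid and treats $r\le p^*$ uniformly: it first derives the differential inequality \eqref{eqb11} for $(u-1)_+$ with right-hand side $C\big(1+(u-1)_+^{p^*-1}\big)$ (which disposes of the singular term), then cites the He--Li Moser iteration to conclude $(u-1)_+\in L^m(\Om)$ for every finite $m$, and only then runs Stampacchia's truncation with $\al$ chosen so large that $\ga=\big(1-\frac{p^*}{\al}\big)\frac{p^*}{p}>1$.

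The genuine gap is in your subcritical branch. Starting only from the a priori information $u\in L^{p^*}(\Om)$, a single Stampacchia pass does not close for all $r<p^*$. Concretely, testing with $(u-k)^+$ and estimating $\int_{A_k}u^{r-1}(u-k)^+\le \|u\|_{p^*}^{r}|A_k|^{1-r/p^*}$ (using $u^{r-1}(u-k)^+\le u^r$ on $A_k$), Sobolev gives $(h-k)^{p}|A_h|^{p/p^*}\le C|A_k|^{1-r/p^*}$, i.e.\ $\psi(h)\le C(h-k)^{-p^*}\psi(k)^{(p^*-r)/p}$, and the exponent $(p^*-r)/p$ exceeds $1$ only when $r<p^*-p$ --- the bare gap $r<p^*$ is not enough. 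If instead you keep one power of $\|(u-k)^+\|_{p^*}$ on the left, you do get $\ga=\frac{p^*-1}{p-1}>1$, but with a constant proportional to $k^{(r-1)p^*/(p-1)}$, which is not covered by Lemma \ref{lemm13} as stated. Equivalently: the datum $f=\la u^{-\de}+u^{r-1}$ lies a priori only in $L^{p^*/(r-1)}(\Om)$, and $p^*/(r-1)>n/p$ fails for $r$ close to $p^*$ whenever $p>1$. So even in the subcritical case you must first bootstrap the integrability of $u$ to $L^\al(\Om)$ for $\al$ large --- exactly the preliminary step the paper performs before invoking Lemma \ref{lemm13}, and which you reserve only for $r=p^*$. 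Once $(u-1)_+\in L^\al(\Om)$ with $\big(1-\frac{p^*}{\al}\big)\frac{p^*}{p}>1$, your truncation argument goes through verbatim.
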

\begin{proof}
	Let $u$ be a weak solution of $(P_\la)$.
	We follow approach of \cite[Lemma A.6]{giacomoni} to prove
	\begin{equation}\label{eqb11}
	  \int_{\Om}\big(|\na (u-1)_+|^{p-2}\na (u-1)_+ +\ba|\na (u-1)_+|^{q-2}\na (u-1)_+\big)\na w\le C\int_{\Om} (1+(u-1)_+^{p^*-1})w,
	\end{equation}
	for every $w\in W^{1,p}_0(\Om)_+$. Let $\varphi:\mb R\to [0,1]$ be a $C^1$ cut-off function such that $\varphi(t)=0$ for $t\le 0$, $\varphi^\prime(t)\ge 0$ for $0\le t\le 1$ and $\varphi(t)=1$ for $t\ge 1$. For any $\e>0$, define $\varphi_\e(t):=\varphi\big(\frac{t-1}{\e}\big)$ for $t\in \mb R$. Hence $\varphi_\e( u)\in W^{1,p}_0(\Om)$ with $\na(\varphi_\e(u))=(\varphi_\e^\prime(u))\na u.$ Let $w\in C_c^\infty(\Om)$ be such that $w\ge 0$, then using $\varphi_\e(u)w$ as test function in \eqref{eqbw}, we obtain
	\begin{align*}
		\int_\Om \big(|\na u|^{p-2}\na u\na\big(\varphi_\e(u)w\big)+\ba|\na u|^{q-2}\na u\na\big(\varphi_\e(u)w\big)\big)dx=\int_{\Om}\big(\la u^{-\de}+u^{r-1}\big)\varphi_\e(u)wdx.
	\end{align*}
  Hence, \begin{align*}
  	\int_\Om &\big(|\na u|^p+\ba |\na u|^q\big)\varphi_\e^\prime(u)wdx+\int_\Om \big(|\na u|^{p-2}+\ba |\na u|^{q-2}\big)(\na u\cdot\na w)\varphi_\e (u)dx \\ &=\int_{\Om}\big(\la u^{-\de}+u^{r-1}\big)\varphi_\e (u)wdx,
  \end{align*}
  using the fact $\varphi_\e^\prime(u)\ge 0$, above equation yields
  \begin{align*}
  	\int_\Om \big(|\na u|^{p-2}+\ba |\na u|^{q-2}\big)(\na u\cdot\na w) \varphi_\e(u)dx \le \int_{\Om}\big(\la u^{-\de}+u^{r-1}\big)\varphi_\e(u)wdx.
  \end{align*}
  Letting $\e\ra 0^+$, we see that there exists a constant $C>0$ which may depend on $\la$, such that
    \begin{align*}
   \int_{\Om\cap\{u\ge 1\}} \big(|\na u|^{p-2}+\ba |\na u|^{q-2}\big)(\na u\cdot\na w)~dx&\le \int_{\Om\cap\{u\ge 1\}}\big(\la u^{-\de}+u^{r-1}\big)w~dx\\
    &\le C\int_{\Om} (1+(u-1)_+^{p^*-1})w~dx,
   \end{align*}
   this gives us
   \begin{equation*}
   \int_{\Om}\big(|\na (u-1)_+|^{p-2}\na (u-1)_+ +\ba|\na (u-1)_+|^{q-2}\na (u-1)_+\big)\na w\le C\int_{\Om} (1+(u-1)_+^{p^*-1})w,
   \end{equation*}
   for every $w\in C_c^\infty(\Om)$ with $w\ge 0$. Proof of \eqref{eqb11} can be completed proceeding similar as in the proof of \cite[Lemma A.5]{giacomoni}.
   By the  proof of \cite[Theorem 2]{He}, we get $(u-1)_+\in L^{(p^*)^2/p}_\text{loc}(\Om)$ and since $\Om$ is a bounded domain, we conclude that  $(u-1)_+\in L^{(p^*)^2/p}(\Om)$. Repeating the arguments used in proof of \cite[Theorem 2]{He} and using interpolation identity for $L^m$ spaces one can show that $(u-1)_+\in L^m(\Om)$ for all $1\le m<\infty$. Now we will prove that $u\in L^\infty(\Om)$. Set $\bar{u}=(u-1)_+\ge 0$. Consider the truncation function $T_k(s):=(s-k)\chi_{[k,\infty)}$, for $k>0$, which was introduced in \cite{stampacchia}. Let $\Om_k:=\{x\in\Om: \bar{u}(x)\ge k\}$, then taking $T_k(\bar{u})$ as a test function in \eqref{eqb11}, we obtain
	\begin{equation}\label{eqb12}
		\int_{\Om}\big(|\na\bar{u}|^{p-2}\na\bar{u} +\ba|\na\bar{u}|^{q-2}\na\bar{u}\big)\na T_k(\bar{u})\le C\int_{\Om} (1+\bar{u}^{p^*-1})T_k(\bar{u}).
	\end{equation}
	Let $\al>0$ be fixed (to be specified latter). Using the fact $|T_k(\bar{u})|\le \bar{u}$ and $\bar{u}\in L^{\al}(\Om)$ together with H\"older inequality and Sobolev embeddings, we deduce that
	  \begin{align*}
	  	C\int_{\Om} (1+\bar{u}^{p^*-1})T_k(\bar{u}) ~dx\le C\int_{\Om_k}(|\bar{u}|+|\bar{u}|^{p^*})~dx&\le C\left(\int_{\Om_k}(|\bar{u}|+|\bar{u}|^{p^*})^\frac{\al}{p^*}\right)^\frac{p^*}{\al} |\Om_k|^{1-\frac{p^*}{\al}} \\
	  	&\le C_1|\Om_k|^{1-\frac{p^*}{\al}},
	  \end{align*}
	and
	\begin{align*}
		\int_{\Om}|\na\bar{u}|^{p-2}\na\bar{u}\;\na T_k(\bar{u})\ge C_2\int_{\Om_k}|\na T_k(\bar{u})|^p\ge C_3\left(\int_{\Om_k} |T_k(\bar{u})|^{p^*}\right)^\frac{p}{p^*}.
	\end{align*}
	Similarly we can show that
	 \begin{align*}
	 	\int_{\Om}|\na\bar{u}|^{q-2}\na\bar{u}\;\na T_k(\bar{u})~dx\ge 0,
	 \end{align*}
	and for $0<k<h$
	  \begin{align*}
	  	\int_{\Om_k} |T_k(\bar{u})|^{p^*}\ge (h-k)^{p^*}|\Om_h|,
	  \end{align*}
due to the fact $\Om_h\subset\Om_k$. Using all these informations in \eqref{eqb12}, we obtain
	  \begin{align*}
	  	\psi(h)\le\frac{C_4}{(h-k)^{p^*}}(\psi(k))^{\big(1-\frac{p^*}{\al}\big)\frac{p^*}{p}},
	  \end{align*}
	where $\psi(j)=|\Om_j|$ for $j\ge 0$. We choose $\al>0$ such that $\big(1-\frac{p^*}{\al}\big)\frac{p^*}{p}>1$. Then from Lemma \ref{lemm13}, for $k_0=0$, $\rho=p^*$, $\ga=\big(1-\frac{p^*}{\al}\big)\frac{p^*}{p}>1$ and $C= C_4$, we get
	   $\psi(d)=0$, where $d^{p^*}=C_4\; 2^\frac{p^*\ga}{\ga-1}|\Om|^{\ga-1}$, that is $|\Om_d|=|\{x\in\Om: \bar{u}\ge d\}|=0$. Hence, $\bar{u}\in L^\infty(\Om)$ and because $u$ is non-negative we get $u\in L^\infty(\Om)$.
	\QED
\end{proof}

Let us fix $\hat{\la}>\la_1(q,\ba)$, then from \cite[Theorem 2]{tanaka}, we know that the problem
\begin{align*}
-\Delta_{p} u-\ba\Delta_{q} u =\hat{\la} |u|^{q-2}u \ \mbox{ in } \Om, \quad u=0 \ \mbox{on }\pa\Om,
\end{align*}
has a positive solution $\hat{\phi}\in C^{1,\sigma}(\bar{\Om})$ for some $\sigma\in(0,1)$. Now we consider purely singular problem associated to $(P_\la)$,
\begin{equation*}
(S_\la)\quad \left\{
-\Delta_{p}u-\ba\Delta_{q}u  = \la u^{-\de}, \ u>0 \;  \text{ in } \Om, \;  u=0 \quad \text{ on } \pa\Om.
\right.
\end{equation*}
\begin{Lemma} \label{lemm10}
	Problem $(S_\la)$ has a unique solution $\ul u_\la$ in $W^{1,p}_0(\Om)$ for all $\la>0$. Moreover, $\ul u_\la\ge\e_\la\hat{\phi}$ a.e. in $\Om$ for some $\e_\la>0$.
\end{Lemma}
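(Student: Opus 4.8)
The plan is to realize $\ul u_\la$ as the monotone limit of solutions of a family of regularized, non-singular problems, and then to recover positivity, the lower bound, and uniqueness by comparison. For $k\in\N$ I would consider
\begin{equation*}
-\Delta_{p} u - \ba\Delta_{q} u = \la\,(u^+ + \tfrac1k)^{-\de} \ \text{ in } \Om, \quad u = 0 \ \text{ on } \pa\Om.
\end{equation*}
Here the right-hand side is bounded and continuous in $u$, so the energy $E_k(u)=\frac1p\|\na u\|_p^p+\frac{\ba}{q}\|\na u\|_q^q-\la\int_\Om G_k(u)$, with $G_k(s)=\int_0^{s^+}(t+\frac1k)^{-\de}\,dt$, is of class $C^1$, coercive (since $G_k$ grows at most linearly) and weakly lower semicontinuous on $W^{1,p}_0(\Om)$. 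I would take a global minimizer, replace it by its modulus to make it nonnegative, read off a weak solution $u_k\ge 0$ from the Euler--Lagrange equation, and invoke the strong maximum principle to get $u_k>0$ in $\Om$.

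Testing this regularized equation with $u_k$ and using $(u_k+\frac1k)^{-\de}u_k\le u_k^{1-\de}$ together with the Sobolev embedding and $1-\de<1<p$ yields $\|\na u_k\|_p\le C$ uniformly in $k$; since the regularized right-hand sides increase with $k$, the weak comparison principle for $A_{p,q}$ gives $u_k\le u_{k+1}$, so $u_k\uparrow\ul u_\la$ a.e.\ and $u_k\rightharpoonup\ul u_\la$ in $W^{1,p}_0(\Om)$. The lower bound is obtained already at this level: $u_1$ solves $-\Delta_{p}u_1-\ba\Delta_{q}u_1=\la(u_1+1)^{-\de}>0$, hence is a positive supersolution of $A_{p,q}w=0$, and Hopf's boundary lemma for $A_{p,q}$ on the $C^2$ domain $\Om$ gives $u_1\ge c_0\,d(\cdot,\pa\Om)$; since $\hat\phi\in C^{1,\sigma}(\ov\Om)$ vanishes on $\pa\Om$ one has $\hat\phi\le C\,d(\cdot,\pa\Om)$, so $u_k\ge u_1\ge\e_\la\hat\phi$ for all $k$.

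With this uniform lower bound I can pass to the limit. The product $(u_k+\frac1k)^{-\de}|u_k-\ul u_\la|$ is dominated by $\e_\la^{-\de}\hat\phi^{-\de}\,\ul u_\la$, which is integrable because the comparable boundary behaviour $\e_\la\hat\phi\le\ul u_\la\le C\,d(\cdot,\pa\Om)$ makes it $O(d^{1-\de})$; hence testing with $u_k-\ul u_\la$ and exploiting the $(S_+)$-property of $A_{p,q}$ upgrades the convergence to strong convergence of the gradients, after which one passes to the limit in the operator and, by dominated convergence, in the singular term. Thus $\ul u_\la$ is a weak solution of $(S_\la)$, it inherits $\ul u_\la\ge\e_\la\hat\phi$, and a density argument extends the identity from $C_c^\infty(\Om)$ to all test functions in $W^{1,p}_0(\Om)$.

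For uniqueness I would use the standard monotonicity trick: if $u_1,u_2$ both solve $(S_\la)$, testing the difference of their weak formulations with $(u_1-u_2)^+\in W^{1,p}_0(\Om)$ makes the left-hand side nonnegative by monotonicity of the maps $\xi\mapsto|\xi|^{p-2}\xi$ and $\xi\mapsto|\xi|^{q-2}\xi$, while the right-hand side $\la\int_\Om(u_1^{-\de}-u_2^{-\de})(u_1-u_2)^+$ is nonpositive because $s\mapsto s^{-\de}$ is decreasing; hence $\na(u_1-u_2)^+=0$, so $u_1\le u_2$, and symmetrically $u_1=u_2$. Both integrals are finite since the boundary control $\e_\la\hat\phi\le u_i\le C\,d(\cdot,\pa\Om)$ makes the singular integrand $O(d^{1-\de})$, integrable for $\de\le 1$. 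I expect the main obstacle to be precisely this boundary analysis: establishing the Hopf/comparison estimate $\ul u_\la\ge\e_\la\hat\phi$ rigorously for the $(p,q)$-operator and securing enough control of $\ul u_\la$ near $\pa\Om$ (sharp when $\de=1$) to justify dominated convergence in the singular term. Everything else is routine once a weak comparison principle and the $(S_+)$-property for $A_{p,q}$ are available.
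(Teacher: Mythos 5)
Your proposal is correct in outline, but it follows a genuinely different route from the paper. You build $\ul u_\la$ as the increasing limit of solutions of the regularized problems with right-hand side $\la(u^++\tfrac1k)^{-\de}$, and you extract the key lower bound from Hopf's boundary lemma applied to $u_1$. The paper instead minimizes the singular energy $\tilde I_\la(u)=\frac1p\|\na u\|_p^p+\frac{\ba}{q}\|\na u\|_q^q-\frac{\la}{1-\de}\int_\Om u_+^{1-\de}$ directly (it is coercive and weakly lower semicontinuous even though it is not differentiable at $0$), gets a nontrivial global minimizer since $\tilde I_\la(\e\hat\phi)<0$, and then proves $\ul u_\la\ge\e_\la\hat\phi$ by a pure convexity argument: since $\tilde I_\la'(\e\hat\phi)=\hat\la(\e\hat\phi)^{q-1}-\la(\e\hat\phi)^{-\de}\le-\frac{\la}{2}(\e\hat\phi)^{-\de}<0$ for small $\e$, the one-dimensional function $\xi(t)=\tilde I_\la(\ul u_\la+t(\e_\la\hat\phi-\ul u_\la)_+)$ would have $0\le\xi'(1)<0$ if $(\e_\la\hat\phi-\ul u_\la)_+\not\equiv0$; uniqueness then comes from strict convexity of $\tilde I_\la$ on the nonnegative cone. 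The trade-off is clear: the paper's argument is essentially self-contained and needs no regularity theory, whereas your route leans on several nontrivial external facts for the operator $-\Delta_p-\ba\Delta_q$ — global $C^{1,\al}(\ov\Om)$ regularity of $u_1$, the Hopf boundary point lemma, the weak comparison principle, and the $(S_+)$-property — each of which is available in the literature but must be cited carefully for the $(p,q)$-case. Two small points you should tighten if you pursue your version: the domination $(\e_\la\hat\phi)^{-\de}\ul u_\la\in L^1(\Om)$ is cleanest via Hardy's inequality ($\ul u_\la/d\in L^p$, so $\ul u_\la d^{-\de}=(\ul u_\la/d)\,d^{1-\de}\in L^1$) rather than via an upper bound $\ul u_\la\le C\,d$, which you have not yet established at that stage; and your monotonicity step $u_k\le u_{k+1}$ should be justified by testing the difference of the two equations with $(u_k-u_{k+1})^+$ and using that $s\mapsto(s+\tfrac1k)^{-\de}$ is decreasing while $(s+\tfrac1k)^{-\de}\le(s+\tfrac1{k+1})^{-\de}$, since a bare appeal to ``the weak comparison principle'' for a non-monotone right-hand side is not automatic.
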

\begin{proof}
	The energy functional corresponding to $(S_\la)$ is given by
	\begin{align*}
	  \tilde{I_\la}(u):=\frac{1}{p}\int_\Om |\na u|^p+\frac{\ba}{q}\int_{\Om}|\na u|^q -\frac{\la}{1-\de}\int_\Om u_+^{1-\de}~dx, \ \  u\in W^{1,p}_0(\Om).
	\end{align*}
	It is easy to verify that $\tilde{I_\la}$ is coercive and weakly lower semicontinuous on $W^{1,p}_0(\Om)$. Therefore,  $\tilde{I_\la}$ has a global minimizer $\ul u_\la\in W^{1,p}_0(\Om)$. Moreover, due to the fact $\tilde{I_\la}(0)=0>\tilde{I_\la}(\e\hat{\phi})$ for sufficiently small $\e>0$, we have $\ul u_\la\neq 0$ in $\Om$ and hence without loss of generality we may assume $\ul u_\la\ge 0$.
	Next we will show that $\ul u_\la\ge \e\hat{\phi}$ a.e. in $\Om$ for some constant $\e>0$. First we observe that G\v ateaux derivative $\tilde{I_\la}^\prime(\e\hat{\phi})$ of $\tilde{I_\la}$ exists at $\e\hat{\phi}$ and satisfies weakly
	\begin{equation}\label{eqb61}
	\begin{aligned}
	\tilde{I_\la}^\prime(\e\hat{\phi})=-\Delta_{p}(\e\hat{\phi})-\ba\Delta_{q}(\e\hat{\phi})-\la (\e\hat{\phi})^{-\de}
	&=\hat{\la}(\e\hat{\phi})^{q-1}-\la(\e\hat{\phi})^{-\de}\\&=(\e\hat{\phi})^{-\de}\big(\hat{\la}(\e\hat{\phi})^{q-1+\de}-\la\big)\\
	&\le -\frac{\la}{2}(\e\hat{\phi})^{-\de}<0
	\end{aligned}
	\end{equation}
	whenever $\e>0$ is small enough, say, $0<\e<\e_\la$. Suppose the function $v=(\ul u_\la-\e_\la\hat{\phi})_-=(\e_\la\hat{\phi}-\ul u_\la)_+$ does not vanish identically on some positive measure subset of $\Om$. Set $\xi(t):=\tilde{I_\la}(\ul u_\la+tv)$ for $t\ge 0$. We note that $\xi$ is convex and $\xi(t)\ge\xi(0)$ for all $t>0$. Furthermore, due to the fact $\ul u_\la+tv\ge\max\{\ul u_\la,t\e_\la\hat{\phi}\}\ge t\e_\la\hat{\phi}$ for $t>0$, the G\v ateaux derivative $\tilde{I_\la}^\prime(\ul u_\la+tv)$ of $\tilde{I_\la}$ exists at $\ul u_\la+tv$  and
	\begin{align*}
	\xi^\prime(t)=\langle \tilde{I_\la}^\prime(\ul u_\la+tv),v\rangle
	\end{align*}
	for all $t>0$. Due to convexity of $\xi$ and the fact $\xi(t)\ge\xi(0)$ for all $t>0$ we see that $\xi^\prime$ is nonnegative and nondecreasing. Therefore, with the help of \eqref{eqb61}, we have
	\begin{align*}
	0\le\xi^\prime(1)&=\int_{\Om}\big(|\na (\ul u_\la+v)|^{p-2}\na (\ul u_\la+v)\na v+\ba|\na (\ul u_\la+v)|^{q-2}\na (\ul u_\la+v)\na v\big)dx\\
	&\qquad-\la\int_{\Om} (\ul u_\la+v)^{-\de}v~dx\\
	&\le -\frac{\la}{2}\int_{\{v>0\}}(\e_\la\hat{\phi})^{-\de}v<0,
	\end{align*}
	which is a contradiction. Thus $v\equiv 0$ in $\Om$, that is, $\ul u_\la\ge \e_\la\hat{\phi}$ a.e. in $\Om$. Since $\tilde{I_\la}$ is strictly convex on $W^{1,p}_0(\Om)_+$, we conclude that such a $\ul u_\la$ is unique.\QED
    \end{proof}

\begin{Lemma}\label{lemm18}
	Let $\ul u_\la$ be the solution of problem $(S_\la)$ and $\bar{u}$ be any weak supersolution (or solution) of $(P_\la)$, then the following comparison principle holds
	  $$ \ul u_\la\le \bar{u} \ \mbox{ a.e. in } \Om.$$
\end{Lemma}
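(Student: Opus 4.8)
The plan is to run the standard comparison argument for the $(p,q)$-operator by testing with the positive part of the difference. First I would observe that since $\bar u^{r-1}\ge 0$, every weak supersolution $\bar u$ of $(P_\la)$ is automatically a weak supersolution of the purely singular problem $(S_\la)$; hence it suffices to show that $\ul u_\la$ lies below any supersolution of $(S_\la)$, which isolates the singular term as the only genuine source of difficulty. I would then set $w:=(\ul u_\la-\bar u)_+$, so that $w\in W^{1,p}_0(\Om)_+$ and $\na w=(\na\ul u_\la-\na\bar u)\chi_{\{\ul u_\la>\bar u\}}$.

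Next I would use $w$ as a test function in the weak formulation of $(S_\la)$ satisfied with equality by $\ul u_\la$ and in the supersolution inequality satisfied by $\bar u$, and subtract. On the left this leaves the sum of $\int_{\{\ul u_\la>\bar u\}}\big(|\na\ul u_\la|^{p-2}\na\ul u_\la-|\na\bar u|^{p-2}\na\bar u\big)\cdot(\na\ul u_\la-\na\bar u)\,dx$ and its $q$-analogue (with factor $\ba$), which is nonnegative by the standard (strict) monotonicity inequalities for $\xi\mapsto|\xi|^{p-2}\xi$ and $\xi\mapsto|\xi|^{q-2}\xi$, valid for all $p,q>1$. On the right one obtains $\la\int_{\{\ul u_\la>\bar u\}}(\ul u_\la^{-\de}-\bar u^{-\de})\,w\,dx$ together with a further nonpositive contribution $-\int \bar u^{r-1}w\,dx$ if one keeps $\bar u$ as a supersolution of $(P_\la)$. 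Since $t\mapsto t^{-\de}$ is strictly decreasing, on $\{\ul u_\la>\bar u\}$ we have $\ul u_\la^{-\de}-\bar u^{-\de}\le 0$, so the right-hand side is $\le 0$. Sandwiching, $0\le\mathrm{LHS}\le\mathrm{RHS}\le 0$ forces both sides to vanish; strict monotonicity then gives $\na\ul u_\la=\na\bar u$ a.e.\ on $\{\ul u_\la>\bar u\}$, hence $\na w\equiv 0$, and since $w\in W^{1,p}_0(\Om)$ the Poincar\'e inequality yields $w\equiv 0$, i.e.\ $\ul u_\la\le\bar u$ a.e.\ in $\Om$.

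The main obstacle is the admissibility of $w$ as a test function, specifically the integrability of the singular terms $\ul u_\la^{-\de}w$ and $\bar u^{-\de}w$. For $\ul u_\la$ this is immediate, since $w\le\ul u_\la$ gives $\ul u_\la^{-\de}w\le\ul u_\la^{1-\de}\in L^1(\Om)$. For $\bar u$ the product $\bar u^{-\de}w$ need not be a priori integrable near $\{\bar u=0\}$; to control it I would either invoke the lower bound $\ul u_\la\ge\e_\la\hat{\phi}$ from Lemma \ref{lemm10}, together with the Hopf-type boundary behaviour of $\hat{\phi}$, to obtain pointwise control of the singular term, or, more robustly, run the entire argument with truncated test functions $w_n:=\min\{w,n\}$ (or $w$ restricted to $\{\bar u\ge 1/n\}$), on which all integrals are finite, and then pass to the limit. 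The left-hand side converges by monotone/dominated convergence, while on the right the integrand $(\ul u_\la^{-\de}-\bar u^{-\de})w_n$ has a fixed sign on $\{\ul u_\la>\bar u\}$, so Fatou's lemma preserves the inequality $\mathrm{RHS}\le 0$ in the limit. This truncation step, guaranteeing the singular integral is handled with the correct sign, is the only delicate point; the remainder is the routine monotonicity-and-sandwich conclusion.
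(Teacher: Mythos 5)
Your proposal is correct and follows essentially the same route as the paper: subtract the two weak formulations tested on the set $\{\ul u_\la>\bar u\}$, use the monotonicity of $\xi\mapsto|\xi|^{p-2}\xi$ (and its $q$-analogue) to discard the principal part, and conclude from the strict monotonicity of $t\mapsto t^{-\de}$ that $|\{\ul u_\la>\bar u\}|=0$. The only difference is cosmetic: the paper tests with the bounded smoothed indicator $\eta_\e(\ul u_\la-\bar u)$ and lets $\e\to 0$, which automatically sidesteps the integrability of $\bar u^{-\de}w$ that you handle by truncating $(\ul u_\la-\bar u)_+$; both devices are standard and equally valid here.
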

\begin{proof}
 Since $\bar{u}$ is a weak supersolution of $(P_\la)$, we have
  \begin{equation}\label{eqb73}
  	 \int_\Om\big(|\na\bar{ u}|^{p-2}\na \bar{u}+\ba|\na \bar{u}|^{q-2}\na \bar{u}\big)\na\phi ~dx\ge \int_\Om (\la {\bar u}^{-\de}+ {\bar u}^{r-1})\phi~dx,
  \end{equation}
	for all $\phi\in W^{1,p}_0(\Om)$ with $\phi\ge 0$. Let $\eta$ be a smooth function such that $\eta(t)=1$, for $t\ge 1$, $\eta(t)=0$, for $t\le 0$ and $\eta^\prime(t)\ge 0$ for $t\ge 0$. For $\e>0$, set $\eta_\e(t)=\eta\big(\frac{t}{\e}\big)$, then using $\eta_\e(\ul u_\la-\bar{u})$ as a test function in \eqref{eqb73} and in the weak formulation of $(S_\la)$, we deduce that
	\begin{equation}\label{eqb72}
	  \begin{aligned}
	   \int_\Om &\big(|\na \bar{u}|^{p-2}\na \bar{u}-|\na\ul u_\la|^{p-2}\na\ul u_\la\big) \na\eta_\e(\ul u_\la-\bar{u})~dx\\
	   & \quad+\ba \int_\Om \big(|\na \bar{u}|^{q-2}\na \bar{u}-|\na\ul u_\la|^{q-2}\na\ul u_\la\big) \na\eta_\e(\ul u_\la-\bar{u})~dx \\
	   &\ge \int_\Om\big(\la\bar{u}^{-\de}+\bar{u}^{r-1}-\la\ul u_\la^{-\de}\big)\eta_\e(\ul u_\la-\bar{u})~dx \ge\la \int_\Om\big(\bar{u}^{-\de}-\ul u_\la^{-\de}\big)\eta_\e(\ul u_\la-\bar {u})~dx.
	 \end{aligned}
	\end{equation}
	Using the fact $\na\eta_\e(\ul u_\la-\bar{u})=\eta_\e^\prime(\ul u_\la-\bar{u}) \na(\ul u_\la-\bar{u})$ and $\eta^\prime(t)\ge 0$, we get
	\begin{align*}
	    \int_\Om \big(|\na\bar{u}|^{p-2}\na\bar{u} &-|\na\ul u_\la|^{p-2}\na\ul u_\la\big) \na\eta_\e(\ul u_\la-\bar{u})\\
	    &=-\int_\Om \big(|\na\ul u_\la|^{p-2}\na\ul u_\la -|\na\bar{u}|^{p-2}\na\bar{u}\big) \na(\ul u_\la-\bar{u})\eta_\e^\prime(\ul u_\la-\bar{u})\\
	     &\le -C_p\begin{cases}
 	      \ds\int_\Om |\na(\ul u_\la-\bar{u})|^p \eta_\e^\prime(\ul u_\la-\bar{u}), \ \mbox{ if }p\ge 2,\\
	      \ds\int_\Om \frac{|\na(\ul u_\la-\bar{u})|^2}{\big(|\na\ul u_\la|+|\na\bar{u}|\big)^{2-p}}\eta_\e^\prime(\ul u_\la-\bar{u}), \ \mbox{ if }1<p< 2
	  \end{cases}\\
	   &\le 0.
	\end{align*}
Here we used the inequality: there exists a constant $C_p>0$ such that for $a,b\in\mb R^n$,
 \begin{align*}
 |a|^{p-2}|a|-|b|^{p-2}|b|\ge C_p\begin{cases}
 |a-b|^p, \qquad   \mbox{ if }p\ge 2 \\
 \frac{|a-b|^2}{(|a|+|b|)^{2-p}}, \quad \mbox{ if } 1<p<2.
 \end{cases}
 \end{align*}
  Similar result holds for the other term on LHS of \eqref{eqb72}, thereby we infer
	\begin{align*}
	  \la \int_\Om\big(\bar{u}^{-\de}-\ul u_\la^{-\de}\big)\eta_\e(\ul u_\la-\bar{u})\le 0.
	\end{align*}
	Letting $\e\ra 0$, we obtain
	\begin{align*}
	\int_{\{\ul u_\la> \bar{u}\}}(\bar{u}^{-\de}-\ul u_\la^{-\de})dx\le 0,
	\end{align*}
	which implies that $|\{\ul u_\la>\bar{u} \}|=0$, therefore $\ul u_\la\le \bar{u}$ a.e. in $\Om$. \QED
\end{proof}

\begin{Lemma}\label{lemm15}
	For each weak solution $u$ of $(P_\la)$, $|\na u|\in L^{\infty}_\text{loc}(\Om)$ and there exists $s\in(0,1)$ such that $u\in C^{1,s}_\text{loc}(\Om)$.
\end{Lemma}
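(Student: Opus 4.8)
The plan is to reduce the statement to the interior regularity theory of He and Li \cite[Theorem 1]{He} by showing that, once $u$ is known to be bounded and bounded away from zero on compact interior sets, the right-hand side of $(P_\la)$ is a locally bounded datum, so that $u$ solves a $(p,q)$-Laplacian equation with an $L^\infty_\text{loc}$ source.

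First I would fix an arbitrary compact set $K\subset\subset\Om$ and argue that the full nonlinearity $f:=\la u^{-\de}+u^{r-1}$ belongs to $L^\infty(K)$. The non-singular part is immediate: by Lemma \ref{lemm14} we have $u\in L^\infty(\Om)$, so $u^{r-1}\le\|u\|_\infty^{r-1}$ throughout $\Om$. The singular term $u^{-\de}$ is the only delicate ingredient, and it is controlled by a positive lower bound for $u$ on $K$. Indeed, since any weak solution $u$ of $(P_\la)$ is in particular a weak supersolution, Lemma \ref{lemm18} combined with Lemma \ref{lemm10} yields $u\ge\ul u_\la\ge\e_\la\hat{\phi}$ a.e.\ in $\Om$. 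Because $\hat{\phi}\in C^{1,\sigma}(\ov\Om)$ is strictly positive in the interior of $\Om$, it attains a positive minimum $m_K:=\min_K\hat{\phi}>0$ on $K$, whence $u\ge\e_\la m_K>0$ on $K$ and therefore $u^{-\de}\le(\e_\la m_K)^{-\de}$ there. Consequently $f\in L^\infty(K)$.

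Once this is established, I would invoke the interior regularity result of \cite[Theorem 1]{He}: since $u\in W^{1,p}_0(\Om)$ solves $-\De_p u-\ba\De_q u=f$ in the weak sense with $f\in L^\infty_\text{loc}(\Om)$, that theorem (the fixed constant $\ba>0$ in front of $\De_q$ being harmless for the local estimates, as it may be absorbed into the structure constants) produces some $s\in(0,1)$ with $u\in C^{1,s}_\text{loc}(\Om)$ and $|\na u|\in L^\infty_\text{loc}(\Om)$. As $K$ was arbitrary, the conclusion holds on all of $\Om$.

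The main obstacle is the singular term $u^{-\de}$, which blows up precisely where $u$ vanishes, that is, on $\pa\Om$. The key point of restricting to interior regularity is that this difficulty never enters the argument: on every compact subset away from the boundary the lower barrier $\e_\la\hat{\phi}$ keeps $u$ uniformly bounded below by a positive constant, so the singularity is locally invisible and the datum is genuinely $L^\infty_\text{loc}$. Thus the entire weight of the proof rests on the comparison estimate $u\ge\e_\la\hat{\phi}$ furnished by Lemmas \ref{lemm10} and \ref{lemm18}; the subsequent passage to $C^{1,s}_\text{loc}(\Om)$ is then a direct citation of the available $(p,q)$-Laplacian regularity theory.
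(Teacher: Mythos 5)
Your argument is correct and follows essentially the same route as the paper: the paper's proof simply sets $f=\la u^{-\de}+u^{r-1}$, asserts $f\in L^\infty_{\text{loc}}(\Om)$, and cites \cite[Theorem 1]{He}. You have merely filled in the (correct) justification of the local boundedness of $f$ via $u\in L^\infty(\Om)$ from Lemma \ref{lemm14} and the lower barrier $u\ge\ul u_\la\ge\e_\la\hat{\phi}$ from Lemmas \ref{lemm10} and \ref{lemm18}, which is exactly the mechanism the paper relies on (and makes explicit in the proof of Theorem \ref{thm4}).
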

\begin{proof}
	Let $f(x):=f(x,u(x))=u^{-\de}+u^{r-1}$. Then it is easy to see that $f\in L^\infty_\text{loc}(\Om)$, therefore the result follows from \cite[Theorem 1]{He}.\QED
\end{proof}

\textbf{Proof of Theorem \ref{thm4}}:
Proof of the regularity results follow from Lemmas \ref{lemm14} and \ref{lemm15}. Using Lemmas \ref{lemm10} and \ref{lemm18} we complete proof of the Theorem.\QED

 \section{The Nehari manifold}
 It is easy to verify that the energy functional $I_\la$ is not bounded below on $W^{1,p}_0(\Om)$. For each $u\in W^{1,p}_0(\Om)\setminus\{0\}$ define fibering map $J_u:\mb R_+\to\mb R$ associated to the energy functional $I_\la$ as
 $ J_u(t)=I_\la(tu)$ that is,
 \begin{align}
 	J_u(t)&=\frac{t^p}{p}\int_\Om |\na u|^p+\ba\frac{t^q}{q}\int_{\Om}|\na u|^q -\la\frac{t^{1-\de}}{1-\de}\int_\Om |u|^{1-\de}~dx-\frac{t^r}{r}\int_\Om|u|^r~dx \nonumber \\
 	J_u^\prime(t)&=t^{p-1}\int_\Om |\na u|^p+\ba t^{q-1}\int_{\Om}|\na u|^q-\la t^{-\de}\int_\Om |u|^{1-\de}~dx-t^{r-1}\int_\Om|u|^r~dx \label{eqb21} \\
 	J_u^{\prime\prime}(t)&=(p-1)t^{p-2}\int_\Om |\na u|^p+\ba (q-1)t^{q-2}\int_{\Om}|\na u|^q+\la\de t^{-\de-1}\int_\Om |u|^{1-\de}~dx \label{eqb22}\\ &\quad-(r-1)t^{r-2}\int_\Om|u|^r~dx.\nonumber
 \end{align}
 We define the Nehari manifold $N_\la$ associated to problem $(P_\la)$ as
 \begin{align*}
 N_\la=\{u\in W^{1,p}_0(\Om): u\neq 0, \ J^\prime_u(1)=0\}.
 \end{align*}

 \begin{Lemma}
 	The functional $I_\la$ is coercive and bounded below on $N_\la$.
 \end{Lemma}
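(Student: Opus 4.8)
The plan is to exploit the Nehari constraint to eliminate the highest-order term $\int_\Om|u|^r$ from $I_\la$, after which the surviving expression has sign-definite coefficients that make coercivity transparent. For $u\in N_\la$ the identity $J_u^\prime(1)=0$ from \eqref{eqb21} reads
\begin{equation*}
\int_\Om |\na u|^p + \ba\int_\Om |\na u|^q = \la\int_\Om |u|^{1-\de} + \int_\Om |u|^r .
\end{equation*}
Solving this for $\int_\Om|u|^r$ and substituting into the definition of $I_\la(u)$, I would obtain
\begin{align*}
I_\la(u)=\Big(\frac1p-\frac1r\Big)\int_\Om|\na u|^p + \ba\Big(\frac1q-\frac1r\Big)\int_\Om|\na u|^q - \la\Big(\frac{1}{1-\de}-\frac1r\Big)\int_\Om|u|^{1-\de} .
\end{align*}

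Because $1<q<p<r$, both gradient coefficients $\frac1p-\frac1r$ and $\frac1q-\frac1r$ are strictly positive, and since $0<\de<1<r$ the coefficient $\frac{1}{1-\de}-\frac1r$ is positive as well, so only the singular term carries a negative sign. To control it I would apply H\"older's inequality (valid since $1-\de<1<p^*$) followed by the Sobolev embedding $W^{1,p}_0(\Om)\hookrightarrow L^{p^*}(\Om)$, giving
\begin{equation*}
\int_\Om|u|^{1-\de}\le |\Om|^{1-\frac{1-\de}{p^*}}\,\|u\|_{p^*}^{1-\de}\le C\,\|u\|^{1-\de}
\end{equation*}
for a constant $C=C(\Om,n,p,\de)>0$. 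Discarding the non-negative $q$-gradient term then yields
\begin{equation*}
I_\la(u)\ge \Big(\frac1p-\frac1r\Big)\|u\|^p - \la C\Big(\frac{1}{1-\de}-\frac1r\Big)\|u\|^{1-\de}=:a\|u\|^p-b\|u\|^{1-\de},
\end{equation*}
with $a,b>0$.

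Since $p>1>1-\de$, the one-variable function $t\mapsto a t^p - b t^{1-\de}$ tends to $+\infty$ as $t\to\infty$ and attains a finite minimum on $[0,\infty)$ (its unique critical point solves $apt^{p-1}=b(1-\de)t^{-\de}$). Consequently $I_\la(u)\to+\infty$ whenever $\|u\|\to\infty$ along $N_\la$, which is coercivity, and $I_\la$ is bounded below on $N_\la$ by that minimum value. Note that the argument uses no compactness and is insensitive to whether $r<p^*$ or $r=p^*$, since it rests only on the algebraic rearrangement and on H\"older plus Sobolev.

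The only delicate point is bookkeeping: one must check that eliminating the leading nonlinearity through the Nehari identity flips the sign of the dominant term, so that the $\|u\|^p$ contribution governs the behavior at infinity and the negative singular term, of lower order $1-\de$, cannot spoil it. Once the three coefficients are verified to be positive, the remaining estimate is elementary, and I do not anticipate a genuine obstacle—coercivity on $N_\la$ is strictly easier than on all of $W^{1,p}_0(\Om)$ precisely because the constraint removes the critical term.
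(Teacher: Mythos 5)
Your proof is correct and follows essentially the same route as the paper: use the Nehari identity $J_u^\prime(1)=0$ to eliminate $\int_\Om|u|^r$, observe the resulting coefficients have the right signs, and control the remaining singular term via H\"older and the Sobolev embedding, concluding from $1-\de<p$. Your write-up is merely more explicit about the positivity of each coefficient and the behavior of $t\mapsto at^p-bt^{1-\de}$.
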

\begin{proof}
  Let $u\in N_\la$. Then, using H\"older inequality and Sobolev embedding theorems, we deduce that
	\begin{align*}
	  I_\la(u)
	   &=\left(\frac{1}{p}-\frac{1}{r}\right)\int_\Om |\na u|^p+ \ba\left(\frac{1}{q}-\frac{1}{r}\right)\int_\Om|\na u|^q -\left(\frac{1}{1-\de}-\frac{1}{r}\right)\int_\Om |u|^{1-\de}~dx\\
	   &\ge \left(\frac{1}{p}-\frac{1}{r}\right) \|u\|^p -\left(\frac{1}{1-\de}-\frac{1}{r}\right)C \|u\|^{1-\de}.
	\end{align*}
  Since $1-\de<p$, it follows that $I_\la$ is coercive and bounded below in this case.
	\QED
\end{proof}
We split $N_\la$ into points of maxima, points of minima and inflection points, that is
 \begin{align*}
	N_\la^{\pm}=\left\{u\in N_{\la}: J_{u}^{\prime\prime}(1)\gtrless 0\right\},\; \text{and } N_\la^0=\left\{u\in N_{\la}: J_{u}^{\prime\prime}(1) =0\right\}.
 \end{align*}

  \noindent Define \begin{align*}
 \theta_{\la} := \inf\{ I_{\la}(u) \ | \ u \in {N}_{\la}\} \quad \mbox{and}\ \ \theta_{\la}^\pm := \inf\{ I_{\la}(u) \ | \ u \in {N}_{\la}^\pm\}.
 \end{align*}

 \begin{Lemma}
 	There exists $\la_*>0$ such that for all $\la\in (0,\la_*)$, $N_\la^0=\emptyset$.
 \end{Lemma}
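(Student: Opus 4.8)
The plan is to argue by contradiction. Suppose that for some $\la,\ba>0$ there is $u\in N_\la^0$, so that $u\neq 0$ and both $J_u'(1)=0$ and $J_u''(1)=0$. To lighten notation I set
\[
a=\int_\Om|\na u|^p,\quad b=\ba\int_\Om|\na u|^q,\quad e=\int_\Om|u|^{1-\de},\quad f=\int_\Om|u|^r,
\]
so that, by \eqref{eqb21} and \eqref{eqb22} evaluated at $t=1$, the two conditions read $a+b-\la e-f=0$ and $(p-1)a+(q-1)b+\la\de e-(r-1)f=0$; note that $a>0$ since $u\neq 0$, while $b,e,f\ge 0$.

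First I would extract two linear relations by eliminating $f$ and $e$ in turn. Substituting $f=a+b-\la e$ into the second identity gives
\[
(r-p)\,a+(r-q)\,b=(\de+r-1)\,\la e,
\]
whereas adding $\de$ times the first identity to the second cancels the singular term and yields
\[
(p-1+\de)\,a+(q-1+\de)\,b=(r-1+\de)\,f.
\]
Because $1<q<p<r$ and $0<\de<1$, all the coefficients $r-p,\,r-q,\,p-1+\de,\,q-1+\de,\,r-1+\de$ are strictly positive.

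Next I would insert the relevant inequalities. From the Sobolev embedding $W^{1,p}_0(\Om)\hookrightarrow L^r(\Om)$ (continuous for $r\le p^*$, with constant controlled by $S$ in the critical case) one has $f\le C\|u\|^r=C\,a^{r/p}$; discarding the nonnegative term $(q-1+\de)b$ in the second relation then gives $(p-1+\de)a\le(r-1+\de)C\,a^{r/p}$, and since $r>p$ this forces a lower bound
\[
a\ge c_0>0,
\]
where $c_0$ depends only on $p,q,r,\de,\Om$ and $S$ and is, crucially, \emph{independent of $\la$ and $\ba$}. On the other hand, Hölder's inequality together with the Sobolev embedding gives $e\le C'\|u\|^{1-\de}=C'a^{(1-\de)/p}$; discarding $(r-q)b\ge 0$ in the first relation yields $(r-p)a\le(\de+r-1)C'\la\,a^{(1-\de)/p}$, that is $a^{(p-1+\de)/p}\le\frac{(\de+r-1)C'}{r-p}\,\la$, whence $a\le C_1\,\la^{p/(p-1+\de)}$ with $C_1$ also independent of $\ba$.

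Combining the two estimates gives $c_0\le a\le C_1\la^{p/(p-1+\de)}$, which is impossible as soon as $C_1\la^{p/(p-1+\de)}<c_0$. I would therefore define $\la_*:=(c_0/C_1)^{(p-1+\de)/p}$, so that $N_\la^0=\emptyset$ for every $\la\in(0,\la_*)$ and every $\ba>0$. The one point I would be careful about is that the lower bound $a\ge c_0$ must not degenerate as $\la\to0$: this is exactly why I read it off the $\la$-free relation $(p-1+\de)a+(q-1+\de)b=(r-1+\de)f$ rather than from the relation containing $\la e$. The remaining steps are routine uses of Hölder's inequality and the Sobolev embeddings, and work uniformly across the subcritical and critical ranges of $r$.
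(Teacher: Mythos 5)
Your proof is correct and follows essentially the same route as the paper: both derive the two relations $(p-1+\de)\|\na u\|_p^p+\ba(q-1+\de)\|\na u\|_q^q=(r-1+\de)\|u\|_r^r$ and $(r-p)\|\na u\|_p^p+\ba(r-q)\|\na u\|_q^q=(r-1+\de)\la\int_\Om|u|^{1-\de}$, use the $\la$-free one with Sobolev to get a $\la$- and $\ba$-independent lower bound on $\|u\|$, and use the other with H\"older/Sobolev to get an upper bound vanishing as $\la\to0$. The paper merely packages the second step as the positivity of an auxiliary functional $E_\la$ on $N_\la^0$, but the resulting threshold $\la_*$ coincides with yours.
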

 \begin{proof}
   Suppose $u \in {N}_{\la}^{0}$, then \eqref{eqb21} and \eqref{eqb22}, implies that
  	 \begin{eqnarray}\label{eqb23}
 	   (p-1+\de)\|\na u\|_p^p + \ba (q-1+\de)\|\na u\|_q^q &=& (r-1+\de) \|u\|_r^r dx,\\\label{eqb24}
 	   (r-p)\|\na u\|_p^p+\ba (r-q)\|\na u\|_q^q &=& (r-1+\de)\la\int_\Omega |u|^{1-\de} dx.
 	  \end{eqnarray}
   Define $ E_{\la}: N_{\la} \rightarrow \mb R$ as
 	 \begin{equation*}
 	     E_{\la}(u) = \frac{(r-p)\|\na u\|_p^p +\ba (r-q)\|\na u\|_q^q}{(r-1+\de)} - \la\int_\Om |u|^{1-\de} dx.
 	 \end{equation*}
   Then with the help of \eqref{eqb24} we infer that $E_{\la}(u) = 0$ for all $u\in N_{\la}^{0}$.  Moreover,
 	 \begin{align*}
 	    E_{\la}(u) & \geq \left(\frac{r-p}{r-1+\de}\right)\| u\|^p -\la\int_\Om |u|^{1-\de} dx\\
 	    & \geq \left(\frac{r-p}{r-1+\de}\right)\| u\|^p - \la S^{-\frac{1-\de}{p}} |\Om|^{1-\frac{1-\de}{p^*}} \|u\|^{1-\de}\\
 	    & \geq \| u\|^{1-\de} \left[\left(\frac{r-p}{r-1+\de}\right)\|u\|^{p-1+\de} -\la S^{-\frac{1-\de}{p}} |\Om|^{1-\frac{1-\de}{p^*}}\right].
 	 \end{align*}
   With the help of \eqref{eqb23} and Sobolev embeddings, we have
 	\begin{equation*}
 	   \|u\|\geq \left(\frac{(p-1+\de) S^\frac{r}{p}}{(r-1+\de)|\Om|^{1-\frac{r}{p^*}}}\right)^{\frac{1}{r-p}},
 	\end{equation*}
  as a result
 	\begin{equation*}
 	   E_{\la}(u) \geq \|u\|^{1-\de}\left(\left(\frac{r-p}{r-1+\de}\right)\left(\frac{(p-1+\de) S^\frac{r}{p}}{(r-1+\de)|\Om|^{1-\frac{r}{p^*}}}\right)^{\frac{p-1+\de}{r-p}} -\la S^{-\frac{1-\de}{p}} |\Om|^{1-\frac{1-\de}{p^*}}\right).  \end{equation*}
   Set
 	\begin{equation*}
 	  \la_*:=\;\left(\frac{(r-p)S^\frac{1-\de}{p}}{(r-1+\de)|\Om|^{1-\frac{1-\de}{p^*}}}\right)\left(\frac{(p-1+\de)S^\frac{r}{p}}{(r-1+\de)|\Om|^{1-\frac{r}{p^*}}}\right)^{\frac{p-1+\de}{r-p}} >0,
 	\end{equation*}
  \noi then $E_{\la}(u)>0$ for all $\la\in (0, \la_*)$ and $ u \in{N}_{\la}^{0}$, which contradicts the fact that $ E_\la(u)=0$ for all  $u\in N_\la^0$. This proves the lemma.
 	\QED
 \end{proof}

 For fixed $u\in X$, define $ M_{u}: \mb R^{+} \lra \mb R$ as
\begin{equation*}
M_{u}(t)= t^{p-1+\de}\|\na u\|_p^p +\ba \; t^{q-1+\de}\|\na u\|_q^q-  t^{r-1+\de}\int_{\Om} |u|^{r}dx.
\end{equation*}
Then,
\begin{align*}
M_{u}^{\prime}(t)&= (p-1+\de)t^{p+\de-2}\|\na u\|_p^p +\ba \; (q-1+\de)t^{q+\de-2}\|\na u\|_q^q - (r-1+\de)t^{r+\de-2}\int_{\Om}|u|^{r} dx.
\end{align*}
We notice that for $t>0$, $tu\in N_{\la}$ if and only if $t$ is a solution of
$M_{u}(t)={\la} \ds\int_{\Om}|u|^{1-\de} dx$ and if $tu\in N_\la$, then $J_{tu}^{\prime \prime}(1)=t^{-\de}M_{u}^{\prime}(t)$.
 We claim that there exists unique $t_{max}>0$ such that $M_u^\prime(t_{max})=0$. We have
\begin{align*}
M_u^\prime(t)= &t^{q+\de-2}G_u(t),
\end{align*}
where $G_u(t)=(p-1+\de)t^{p-q}\|\na u\|_p^p +\ba \; (q-1+\de)\|\na u\|_q^q- (r-1+\de)t^{r-q}\ds\int_{\Om}|u|^{r} dx$, then to prove the claim it is enough to show the existence of unique $t_{max}>0$ satisfying $G_u(t_{max})=0$. Define  $H_u(t)= (r-1+\de)\; t^{r-q}\ds\int_{\Om} |u|^{r} dx -(p-1+\de)t^{p-q}\|\na u\|_p^p$, then $H_u(t_{max})- \ba\;(q-1+\de)\|\na u\|_q^q= -G_u(t)$. It is easy to see  $H_u(t)<0$ for $t$ small enough, $H_u(t)\ra\infty$ as $t\ra\infty$. Hence, there exists unique $t_*>0$ such that $H_u(t_*)=0$. Therefore, there exists unique $t_{max}>t_*>0$ such that $H_u(t_{max})= \ba\;(q-1+\de)\|\na u\|_q^q$.
Moreover, $M_u$ is increasing in $(0,t_{max})$ and decreasing in $(t_{max},\infty)$. As a consequence
\begin{align*}
(p-1+\de)t^{p}_{max}\|u\|^{p}&\leq (p-1+\de)t^{p}_{max}\|\na u\|_p^p+ \ba (q-1+\de)t^{q}_{max}\|\na u\|_q^q \\&=(r-1+\de) t^{r}_{max}\int_{\Om} |u|^{r} dx\leq (r-1+\de) t^{r}_{max} S^\frac{-r}{p} |\Om|^{1-\frac{r}{p^*}}\|u\|^r,
\end{align*}
set
\begin{equation*}
T_0 :=  \frac{1}{\|u\|}\left(\frac{(p-1+\de)S^\frac{r}{p}}{(r-1+\de)|\Om|^{1-\frac{r}{p^*}}}\right)^{\frac{1}{r-p}}\leq t_{max},
\end{equation*}
then,
\begin{equation*}
\begin{aligned}
M_u(t_{max})\geq  M_u(T_0)&\geq T_0^{p-1+\de} \|u\|^{p} -T_0^{r-1+\de}  S^\frac{-r}{p} |\Om|^{1-\frac{r}{p^*}}\|u\|^{r}\\&=\|u\|^{1-\de}\left(\frac{r-p}{r-1+\de}\right)\left(\frac{(p-1+\de)S^\frac{r}{p}}{(r-1+\de)|\Om|^{1-\frac{r}{p^*}}}\right)^{\frac{p-1+\de}{r-p}}\geq 0.
\end{aligned}
\end{equation*}
Therefore, if $\la < \la_*$, we have $M_u(t_{max})>\la\ds\int_\Om |u|^{1-\de} dx$, which ensures the existence of $\ul t<t_{max}<\ov t$ such that $M_u(\ul t) =M_u(\ov t)=\la \ds\int_{\Om} |u|^{1-\de} dx $. That is,  $\ul tu$ and $\ov tu \in {N}_{\la}.$ Also, $M_u^{\prime}(\ul t)>0$ and $M_u^{\prime}(\ov t)<0$ which implies $\ul tu \in{N}^{+}_{\la}$ and $\ov tu \in N^{-}_{\la}.$

\begin{Lemma}\label{lemm1}
	The following hold:
	\begin{enumerate}
		\item[(i)] $\sup\{\|u\|: u\in N_\la^+\}<\infty$
	   \item[(ii)] $\inf\{\|v\|:v\in N_\la^-\}>0$ and $\sup\{\|v\|:v\in N_\la^-, I_\la(v)\le M\}<\infty$ for all $M>0$.
	\end{enumerate}
Moreover, $\theta_{\la}^+>-\infty$ and $\theta_{\la}^->-\infty$.
\end{Lemma}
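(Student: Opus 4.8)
The plan is to translate the defining sign conditions $J_u''(1)\gtrless 0$ for $N_\la^{\pm}$ into usable norm estimates by substituting the Nehari constraint \eqref{eqb21} into the second derivative \eqref{eqb22}. Eliminating $\int_\Om|u|^r$ yields
\[
 J_u''(1)=(p-r)\|\na u\|_p^p+\ba(q-r)\|\na u\|_q^q+(r-1+\de)\la\int_\Om |u|^{1-\de}\,dx,
\]
whereas eliminating $\la\int_\Om|u|^{1-\de}$ yields
\[
 J_u''(1)=(p-1+\de)\|\na u\|_p^p+\ba(q-1+\de)\|\na u\|_q^q-(r-1+\de)\int_\Om |u|^r\,dx,
\]
which are the strict-inequality counterparts of \eqref{eqb24} and \eqref{eqb23}. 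Since $r>p>q>1>1-\de$, each of the coefficients $r-p,\ r-q,\ p-1+\de,\ q-1+\de,\ r-1+\de$ is positive, so in each identity the $\ba$-term carries a fixed sign and can simply be discarded in the favourable direction.

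For part (i), on $N_\la^+$ the first identity is positive; discarding the nonnegative term $\ba(r-q)\|\na u\|_q^q$ leaves $(r-p)\|u\|^p<(r-1+\de)\la\int_\Om|u|^{1-\de}$. Estimating $\int_\Om|u|^{1-\de}\le S^{-(1-\de)/p}|\Om|^{1-(1-\de)/p^*}\|u\|^{1-\de}$ by H\"older's inequality and the Sobolev embedding then gives $\|u\|^{p-1+\de}\le C(\la)$, a uniform bound because $p-1+\de>0$. For the lower bound in part (ii), on $N_\la^-$ the second identity is negative; discarding $\ba(q-1+\de)\|\na u\|_q^q$ gives $(p-1+\de)\|u\|^p<(r-1+\de)\int_\Om|u|^r\le (r-1+\de)S^{-r/p}|\Om|^{1-r/p^*}\|u\|^r$, whence
\[
 \|u\|\ge\left(\frac{(p-1+\de)S^{r/p}}{(r-1+\de)|\Om|^{1-\frac{r}{p^*}}}\right)^{\frac{1}{r-p}}>0,
\]
which is exactly the threshold already encountered in the fibering-map analysis preceding this lemma.

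The remaining claims follow from coercivity. Since $N_\la^-\subset N_\la$ and $I_\la$ is coercive on $N_\la$, any $v\in N_\la^-$ with $I_\la(v)\le M$ satisfies $(\frac1p-\frac1r)\|v\|^p-C(\frac{1}{1-\de}-\frac1r)\|v\|^{1-\de}\le I_\la(v)\le M$, and since $1-\de<p$ the left-hand side tends to $+\infty$ with $\|v\|$, forcing the uniform bound on $\{v\in N_\la^-:I_\la(v)\le M\}$. Finally, because $I_\la$ is bounded below on $N_\la$ (proved above) and $N_\la^{\pm}\subset N_\la$, we obtain $\theta_\la^{\pm}\ge\theta_\la>-\infty$. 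The whole argument is essentially sign bookkeeping once the two forms of $J_u''(1)$ are recorded; the only point requiring genuine care is keeping each $\ba$-term on its favourable side, so that the resulting bounds hold uniformly in $\ba$.
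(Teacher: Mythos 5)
Your proof is correct and follows essentially the same route as the paper: both substitute the Nehari constraint \eqref{eqb21} into \eqref{eqb22} to obtain the two forms of $J_u''(1)$, discard the $\ba$-terms in the favourable direction, and apply H\"older's inequality with the Sobolev embedding, while the final bound on $\{v\in N_\la^-: I_\la(v)\le M\}$ and the finiteness of $\theta_\la^\pm$ come from the coercivity estimate of the preceding lemma exactly as in the paper.
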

\begin{proof}
	(i) Let $u\in N_\la^+$. We have
       \begin{align*}
      	    0<J^{\prime\prime}_u(1)=(p-r)\|\na u\|_p^p+\ba (q-r)\|\na u\|_q^q+\la (r-1+\de)\int_{\Om} |u|^{1-\de}dx,
       \end{align*}
      then by means of H\"older inequality and Sobolev embeddings, we obtain
         \begin{align*}
      	        \|u\|^{p-1+\de}\le \la\frac{(r-1+\de)C}{r-p},
         \end{align*}
      which implies $\sup\{\|u\|: u\in N_\la^+\}<\infty$.\\
      (ii) Let $v\in N_\la^-$. We have
            \begin{align*}
        	      0>J^{\prime\prime}_u(1)=(p-1+\de)\|\na u\|_p^p+\ba (q-1+\de)\|\na u\|_q^q-(r-1+\de)\int_{\Om}|u|^rdx,
             \end{align*}
        which on using Sobolev embedding gives us
            \begin{align*}
        	            \frac{p-1+\de}{C(r-1+\de)}\le\|u\|^{r-p}.
             \end{align*}
       Furthermore, if $I_\la(u)\le M$, we have
           \begin{align*}
         	    I_\la(u)=\left(\frac{1}{p}-\frac{1}{r}\right)\|\na u\|_p^p +\ba\left(\frac{1}{q}-\frac{1}{r}\right)\|\na u\|_q^q -\la\left(\frac{1}{1-\de}-\frac{1}{r}\right)\int_\Om|u|^{1-\de}dx\le M,
            \end{align*}
        which implies that
            \begin{align*}
             	\left(\frac{1}{p}-\frac{1}{r}\right)\|u\|^p\le M+\la\left(\frac{1}{1-\de}-\frac{1}{r}\right)C\|u\|^{1-\de}.
            \end{align*}
      Since $1-\de<1<p$, we get the required result.\QED
\end{proof}

\begin{Lemma}\label{lemm2}
	For all $\la\in(0,\la_*)$, $\theta_{\la}^+<0$.
\end{Lemma}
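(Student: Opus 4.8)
The plan is to show that every element of $N_\la^+$ has strictly negative energy, and hence their infimum is negative. The natural route is through the fibering map: I would fix an arbitrary $u \in W^{1,p}_0(\Om)\setminus\{0\}$ and study the sign of $I_\la$ along the ray $t \mapsto tu$ near the minimum point $\underline{t}$ identified in the discussion preceding Lemma~\ref{lemm1}. Recall that for $\la < \la_*$ the construction there produces $\underline t < t_{max} < \overline t$ with $\underline t u \in N_\la^+$ and $\overline t u \in N_\la^-$, where $\underline t$ is a local minimizer of $J_u$. So it suffices to prove $J_u(\underline t) < 0$, i.e. $I_\la(\underline t u) < 0$, for every such $u$.

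**Key steps.** First I would record that on $N_\la^+$ the second-derivative condition $J_u''(1) > 0$ combined with $J_u'(1)=0$ pins down the relative sizes of the four integral terms. The cleanest way to extract negativity is to combine the two identities coming from $J_u'(\underline t)=0$ and $J_u''(\underline t)>0$ to eliminate the critical term $\int_\Om |u|^r$. Concretely, using \eqref{eqb21} at the critical point gives
\begin{equation*}
\underline t^{\,p}\|\na u\|_p^p + \ba\,\underline t^{\,q}\|\na u\|_q^q - \la\,\underline t^{\,1-\de}\int_\Om |u|^{1-\de} - \underline t^{\,r}\int_\Om |u|^r = 0,
\end{equation*}
so that $I_\la(\underline t u)$ can be rewritten, after substituting for the $\int|u|^r$ term, as a combination of the $p$-, $q$- and singular-terms only. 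The positivity of $J_u''(\underline t)$ (equivalently $M_u'(\underline t)>0$, as noted before Lemma~\ref{lemm1}) forces the $r$-term to be dominated by the others in the right proportion; feeding this into the rewritten energy yields a strictly negative coefficient in front of the singular term, which is itself strictly positive since $u \neq 0$. I would present $I_\la(\underline t u)$ in the form $\bigl(\tfrac1p-\tfrac1r\bigr)\underline t^{\,p}\|\na u\|_p^p + \ba\bigl(\tfrac1q-\tfrac1r\bigr)\underline t^{\,q}\|\na u\|_q^q - \la\bigl(\tfrac1{1-\de}-\tfrac1r\bigr)\underline t^{\,1-\de}\int_\Om |u|^{1-\de}$ and then bound the first two (positive) terms above using the critical-point identity, so that the singular term wins.

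**Main obstacle.** The delicate point is the sign bookkeeping: the $p$- and $q$-gradient terms contribute positively to $I_\la$, so the negativity of $I_\la(\underline t u)$ is not immediate and must come from the singular exponent $1-\de < p$ together with the fact that $\underline t$ sits strictly to the \emph{left} of $t_{max}$ (the increasing branch of $M_u$). The cleanest argument I expect is to show directly that $J_u$ attains a negative value somewhere on $(0,t_{max})$: since $J_u'(0^+) = -\infty$ (the term $-\la t^{-\de}\int_\Om|u|^{1-\de}$ dominates as $t\to 0^+$ because $1-\de-1 = -\de < 0$), $J_u$ is strictly decreasing for small $t$ and starts from $J_u(0)=0$, hence $J_u(t) < 0$ for all small $t>0$. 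Because $\underline t$ is the global minimum of $J_u$ on the first bump $(0,t_{max})$, we get $J_u(\underline t) \le J_u(t) < 0$ for such small $t$. This avoids the full algebraic elimination and reduces the whole lemma to the asymptotic behaviour of the fibering map at $0$. Taking the infimum over $u$ (equivalently over all points of $N_\la^+$) then gives $\theta_\la^+ < 0$, and since $\theta_\la^+ > -\infty$ by Lemma~\ref{lemm1}, the value is finite and strictly negative.
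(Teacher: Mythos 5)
Your proposal is correct, but the argument you actually commit to is genuinely different from the paper's. The paper works purely algebraically at $t=1$: it uses $J_u'(1)=0$ to eliminate the singular term from $I_\la(u)$, then uses $J_u''(1)>0$ (in the equivalent form $(p-1+\de)\|\na u\|_p^p+\ba(q-1+\de)\|\na u\|_q^q>(r-1+\de)\int_\Om|u|^r$, wait --- with the inequality reversed, i.e.\ $\int_\Om|u|^r$ bounded above) to control the remaining $r$-term, arriving at the explicit negative upper bound $\frac{p-1+\de}{1-\de}\bigl(\frac1r-\frac1p\bigr)\|\na u\|_p^p+\ba\frac{q-1+\de}{r-1+\de}\bigl(\frac1r-\frac1q\bigr)\|\na u\|_q^q<0$. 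This is exactly the elimination you sketch in your ``key steps'' paragraph, and it has the advantage of producing a quantitative bound in terms of the gradient norms. The route you ultimately favour --- $J_u(0^+)=0$ with the singular term $-\la\frac{t^{1-\de}}{1-\de}\int_\Om|u|^{1-\de}$ dominating as $t\to0^+$ (since $1-\de<q<p<r$), hence $J_u<0$ near $0$, combined with monotonicity of $J_u$ on $(0,\ul t)$ --- is shorter and leans only on the fibering-map structure already established before Lemma \ref{lemm1}. One point you should make explicit for this route to cover all of $N_\la^+$ rather than just the points $\ul t u$ produced by the construction: for an arbitrary $v\in N_\la^+$ one has $M_v(1)=\la\int_\Om|v|^{1-\de}$ with $M_v'(1)>0$, so $1$ lies on the strictly increasing branch of $M_v$ and therefore $M_v(t)<\la\int_\Om|v|^{1-\de}$, i.e.\ $J_v'(t)<0$, for all $t\in(0,1)$; thus $I_\la(v)=J_v(1)\le J_v(t)<0$ for $t$ small. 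With that observation supplied, your argument is complete and yields the same conclusion as the paper's.
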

\begin{proof}
	Let $u\in N_\la^+$, then using \eqref{eqb21} and \eqref{eqb22}, we have
	   \begin{align*}
	   	 I_\la(u)&=\left(\frac{1}{p}-\frac{1}{1-\de}\right)\|\na u\|_p^p +\ba\left(\frac{1}{q}-\frac{1}{1-\de}\right)\|\na u\|_q^q -\left(\frac{1}{r}-\frac{1}{1-\de}\right)\int_\Om|u|^{r}dx\\
	   	    &\le \left(\frac{1}{p}-\frac{1}{1-\de}\right)\|\na u\|_p^p +\ba\left(\frac{1}{q}-\frac{1}{1-\de}\right)\|\na u\|_q^q\\
	   	     &\qquad -\left(\frac{1}{r}-\frac{1}{1-\de}\right)\left[\frac{p-1+\de}{r-1+\de}\|\na u\|_p^p+ \frac{q-1+\de}{r-1+\de}\|\na u\|_q^q\right]\\
	   	    &=\frac{(p-1+\de)}{1-\de}\left(-\frac{1}{p}+\frac{1}{r}\right)\|\na u\|_p^p+ \ba\frac{(q-1+\de)}{r-1+\de}\left(-\frac{1}{q}+\frac{1}{r}\right)\|\na u\|_q^q<0.
	   \end{align*}
 This completes proof of the lemma.	   \QED
\end{proof}

 \begin{Lemma}\label{lemm3}
	Suppose $u\in N_\la^+$ and $v\in N_\la^-$ are minimizers of $I_\la$ on $N_\la^+$ and $N_\la^-$, respectively. Then for each $w\in W^{1,p}_0(\Om)_+$, the following hold:
	\begin{enumerate}
		\item[(i)] there exists $\e_0>0$ such that $I_\la(u+\e w)\ge I_\la(u)$ for all $0\le\e\le\e_0$,
		\item[(ii)] $t_\e\ra 1$ as $\e\ra 0^+$, where for each $\e\ge 0$, $t_\e$ is the unique positive real number satisfying $t_\e(u+\e w)\in N_\la^-$.
	\end{enumerate}
\end{Lemma}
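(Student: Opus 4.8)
The plan is to read both assertions off the one–dimensional fibering maps $J_{u_\e}$ and $J_{v_\e}$, where $u_\e:=u+\e w$ and $v_\e:=v+\e w$, exploiting the transversality $J''(1)\neq 0$ that distinguishes $N_\la^\pm$ (recall that $N_\la^0=\emptyset$ for $\la\in(0,\la_*)$). Since $w\ge 0$, both $u_\e$ and $v_\e$ stay in $W^{1,p}_0(\Om)_+\setminus\{0\}$, so the auxiliary functions $M_{u_\e}$, $M_{v_\e}$ are defined and, by continuity of their coefficients in $\e$, retain for small $\e$ the two–root picture established before the statement: for $\la<\la_*$ each ray meets $N_\la$ in exactly one point $\ul t(\cdot)<t_{max}(\cdot)$ of $N_\la^+$ and one point $\ov t(\cdot)>t_{max}(\cdot)$ of $N_\la^-$. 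Here I will use the identity $J_z'(t)=t^{-\de}\big(M_z(t)-\la\int_\Om z^{1-\de}\big)$, which shows that $J_z$ decreases on $(0,\ul t)$, increases on $(\ul t,\ov t)$, so that $\ul t z$ is a local minimum and $\ov t z$ a local maximum of the fibering map. The whole argument rests on the continuity of these two projections at $\e=0$.

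For (i) I would take $t(\e):=\ul t(u_\e)$, the unique root of $M_{u_\e}(t)=\la\int_\Om u_\e^{1-\de}$ in the increasing branch $(0,t_{max}(u_\e))$, so that $t(\e)u_\e\in N_\la^+$. Continuity $t(\e)\to t(0)=1$ follows from an intermediate–value argument: $M_{u_\e}\to M_u$ uniformly on compacts, $\int_\Om u_\e^{1-\de}\to\int_\Om u^{1-\de}$ by dominated convergence (dominant $(u+\e_0 w)^{1-\de}\in L^1(\Om)$), and the strict inequality $M_u'(1)>0$ (equivalently $J_u''(1)>0$) makes the crossing transversal, so the root cannot escape a small neighbourhood of $1$. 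The conclusion is then the chain $I_\la(u_\e)=J_{u_\e}(1)\ge J_{u_\e}(t(\e))=I_\la(t(\e)u_\e)\ge I_\la(u)$: the middle inequality holds because $t(\e)$ is the minimum of $J_{u_\e}$ on $(0,t_{max}(u_\e))$ and $1$ lies in that interval for small $\e$, while the last holds because $t(\e)u_\e\in N_\la^+$ and $u$ minimises $I_\la$ on $N_\la^+$.

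For (ii) the base point giving $t_0=1$ is the $N_\la^-$ minimiser $v$, so I read the perturbation as $v_\e$ and take $t_\e:=\ov t(v_\e)$, the unique root in the decreasing branch $(t_{max}(v_\e),\infty)$; its uniqueness is exactly the single decreasing crossing of $M_{v_\e}$, matching the uniqueness asserted in the statement. The same intermediate–value argument, now using $M_v'(1)<0$ (equivalently $J_v''(1)<0$) for transversality, gives $t_\e\to 1$. Equivalently one may argue by compactness: any subsequential limit $t_*$ of $t_{\e_n}$ satisfies $M_v(t_*)=\la\int_\Om v^{1-\de}$ with $t_*\ge t_{max}(v)$, and the only such root is $t_*=1$.

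I expect the only genuine subtlety to be the continuity of the Nehari projections at $\e=0$, and specifically the non–degeneracy underlying it: were $J''(1)$ to vanish the root could split or jump, but this is ruled out since $N_\la^0=\emptyset$ for $\la<\la_*$, which also guarantees that $M_{u_\e}(t_{max})>\la\int_\Om u_\e^{1-\de}$ persists for small $\e$, keeping both roots alive. Note that the continuity of the singular integral $\e\mapsto\int_\Om u_\e^{1-\de}$ is harmless via dominated convergence and that no differentiation of the singular term (hence no pointwise lower bound on $u$) is required; the remaining ingredient in (i), that $1$ lies in the descent basin of $J_{u_\e}$, is immediate from the monotone structure of $M_{u_\e}$ on $(0,t_{max}(u_\e))$.
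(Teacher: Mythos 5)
Your argument is correct and follows essentially the same route as the paper: both proofs rest on the fibering-map structure of $J_{u+\e w}$ and $J_{v+\e w}$ and on the continuity/transversality of the Nehari projections at $\e=0$, guaranteed by $J_u''(1)>0$ and $J_v''(1)<0$ (and you rightly read the base point in (ii) as $v$, as the paper's own proof does). The only cosmetic differences are that for (i) the paper tracks the sign of $\Theta(\e)=J''_{u+\e w}(1)$ rather than the position of $t_{max}(u+\e w)$, and for (ii) it invokes the implicit function theorem where you use a direct intermediate-value/monotonicity argument on $M_{v+\e w}$; these are interchangeable implementations of the same idea.
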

\begin{proof}
	Let $w\in W^{1,p}_0(\Om)_+$. (i) Set
	\begin{small}
	\begin{align*}
	   \Theta(\e)=(p-1)\|\na (u+\e w)\|_p^p+\ba(q-1)\|\na (u+\e w)\|_q^q+\la\de\int_{\Om}|u+\e w|^{1-\de}dx-(r-1)\|u+\e w\|_r^r
	\end{align*}
	\end{small}
	for $\e\ge 0$. Then using continuity of $\Theta$ and the fact that $\Theta(0)=J^{\prime\prime}_u(1)>0$, there exists $\e_0>0$ such that $\Theta(\e)>0$ for all $0\le\e\le\e_0$. Since for each $\e>0$, there exists $s_\e>0$ such that $s_\e(u+\e w)\in N_\la^+$, for each $\e\in [0,\e_0]$ we have
	\begin{align*}
	I_\la(u+\e w)\ge I_\la(s_\e(u+\e w))\ge \theta_{\la}^+=I_\la(u).
	\end{align*}
	(ii) We define a $C^\infty$ function $\xi: (0,\infty)\times \mb R^4\to\mb R$ by
	\begin{align*}
	\xi(t,a,b,c,d)=at^{p-1}+b\ba\; t^{q-1}-\la ct^{-\de}-dt^{r-1}
	\end{align*}
	for $(t,a,b,c,d)\in (0,\infty)\times\mb R^4$. We have
	\begin{align*}
	&\frac{\pa \xi}{\pa t}\left(1,\|\na v\|_p^p,\|\na v\|_q^q,\int_{\Om}|v|^{1-\de},\|v\|_r^r \right)=J^{\prime\prime}_v(1)<0, \ \mbox{and} \\
	&\xi\Big(t_\e,\|\na(v+\e w)\|_p^p,\|\na(v+\e w)\|_q^q,\int_{\Om}|v+\e w|^{1-\de},\|v+\e w\|_r^r \Big)=J^\prime_{v+\e w}(t_\e)=0
	\end{align*}
	for each $\e\ge 0$. Moreover,
	\begin{align*}
	\xi\left(1,\|\na v\|_p^p,\|\na v\|_q^q,\int_{\Om}|v|^{1-\de},\|v\|_r^r \right)=J^{\prime}_v(1)=0.
	\end{align*}
	Therefore, by implicit function theorem there exist open neighbourhood $U\subset(0,\infty)$ and $V\subset\mb R^4$ containing $1$ and $\left(\|\na v\|_p^p,\|\na v\|_q^q,\int_{\Om}|v|^{1-\de},\|v\|_r^r \right)$, respectively such that for all $y\in V$, $\xi(t,y)=0$ has a unique solution $t=h(y)\in U$, where $h:V\to U$ is a continuous function.
	Since \begin{align*}
	\xi\Big(t_\e,\|\na(v+\e w)\|_p^p,\|\na(v+\e w)\|_q^q,\int_{\Om}|v+\e w|^{1-\de},\|v+\e w\|_r^r \Big)=0,
	\end{align*}
	we have
	\begin{align*}
	&\Big(\|\na(v+\e w)\|_p^p,\|\na(v+\e w)\|_q^q,\int_{\Om}|v+\e w|^{1-\de},\|v+\e w\|_r^r \Big)\in V \ \mbox{and} \\
	&h\Big(\|\na(v+\e w)\|_p^p,\|\na(v+\e w)\|_q^q,\int_{\Om}|v+\e w|^{1-\de},\|v+\e w\|_r^r \Big)=t_\e.
	\end{align*}
	Thus by continuity of $h$, we get $t_\e\ra 1$ as $\e\ra 0^+$.\QED
\end{proof}

\begin{Lemma}\label{lemm4}
	Suppose $u\in N_\la^+$ and $v\in N_\la^-$ are minimizers of $I_\la$ on $N_\la^+$ and $N_\la^-$, respectively. Then for each $w\in W^{1,p}_0(\Om)_+$, we have $u^{-\de}w,\ v^{-\de}w\in L^1(\Om)$ and
	\begin{align*}
	&\int_{\Om}\big(|\na u|^{p-2}\na u\na w+\ba|\na u|^{q-2}\na u\na w-\la u^{-\de}w-u^{r-1}w\big)dx\ge 0,\\
	&\int_{\Om}\big(|\na v|^{p-2}\na v\na w+\ba|\na v|^{q-2}\na v\na w-\la v^{-\de}w-v^{r-1}w\big)dx\ge 0.
	\end{align*}
\end{Lemma}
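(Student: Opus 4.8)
The plan is to derive each inequality as a one-sided (right) directional-derivative condition, exploiting that $u$ and $v$ are constrained minimizers and that $w\ge 0$ keeps the perturbation in the admissible cone. The non-differentiable singular term is the crux, and I would handle it by a monotone convergence argument based on the concavity of $s\mapsto s^{1-\de}$.

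For the $N_\la^+$ minimizer $u$ I would start directly from Lemma \ref{lemm3}(i), which gives $I_\la(u+\e w)\ge I_\la(u)=\theta_\la^+$ for all $0\le\e\le\e_0$, so the difference quotient $\e^{-1}\big(I_\la(u+\e w)-I_\la(u)\big)$ is nonnegative for small $\e>0$; I then pass to the limit as $\e\to 0^+$. The three non-singular terms of $I_\la$ form a $C^1$ functional on $W^{1,p}_0(\Om)$ (using $\na u\in L^p\subset L^q$ and the embedding $W^{1,p}_0\hookrightarrow L^r$ for $r\le p^*$), so their difference quotients converge to the finite quantity $\int_\Om\big(|\na u|^{p-2}\na u\na w+\ba|\na u|^{q-2}\na u\na w-u^{r-1}w\big)$. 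For the singular term I would use that $u+\e w\ge u\ge 0$, whence $|u+\e w|^{1-\de}=(u+\e w)^{1-\de}$, and that by concavity the quotient $\e^{-1}\big((u+\e w)^{1-\de}-u^{1-\de}\big)$ increases monotonically as $\e\downarrow 0$ to the pointwise limit $(1-\de)u^{-\de}w$ (equal to $+\infty$ where $u=0<w$). By monotone convergence, $\e^{-1}\int_\Om\big((u+\e w)^{1-\de}-u^{1-\de}\big)\to(1-\de)\int_\Om u^{-\de}w\in[0,+\infty]$. Combining, the full difference quotient converges to the (finite) regular part minus $\la\int_\Om u^{-\de}w$; since it is $\ge 0$, this forces $\la\int_\Om u^{-\de}w<\infty$, giving both $u^{-\de}w\in L^1(\Om)$ and the first asserted inequality at once.

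For the $N_\la^-$ minimizer $v$ the direction $w$ need not keep us on or above the manifold, so I would instead use the reprojection of Lemma \ref{lemm3}(ii): for each $\e\ge 0$ there is a unique $t_\e>0$ with $t_\e(v+\e w)\in N_\la^-$ and $t_\e\to 1$ as $\e\to 0^+$, whence minimality gives $I_\la(t_\e(v+\e w))\ge I_\la(v)$. The structural fact I would invoke is that, since $v\in N_\la^-$, the fibering map $J_v$ has a local maximum at $t=1$, so $I_\la(t_\e v)=J_v(t_\e)\le J_v(1)=I_\la(v)$ for $\e$ small. Writing $I_\la(t_\e(v+\e w))-I_\la(v)=\big[I_\la(t_\e(v+\e w))-I_\la(t_\e v)\big]+\big[I_\la(t_\e v)-I_\la(v)\big]$ and dropping the nonpositive second bracket yields $I_\la(t_\e(v+\e w))-I_\la(t_\e v)\ge 0$. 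This difference compares $v+\e w$ with $v$ at the \emph{same} scaling $t_\e$, so dividing by $\e$ and using $t_\e\to 1$ I can repeat the two computations above: the non-singular terms converge to the regular part at $v$, and the homogeneity $\int_\Om|t_\e u|^{1-\de}=t_\e^{1-\de}\int_\Om|u|^{1-\de}$ with $t_\e^{1-\de}\to 1$ lets the same monotone-convergence limit carry the singular term. The second inequality and $v^{-\de}w\in L^1(\Om)$ follow exactly as before.

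The main obstacle is the singular term: since $u^{-\de}w\in L^1(\Om)$ is \emph{not} known a priori, one cannot differentiate under the integral, and the difference-quotient limit may be $+\infty$. The monotone convergence theorem, justified by the concavity of $s\mapsto s^{1-\de}$, is what makes the passage to the limit rigorous and, as a bonus, delivers the claimed integrability for free. A secondary difficulty in the $N_\la^-$ case is the coupled variation in both $t_\e$ and $\e$; the decomposition above isolates the $\e$-variation at fixed scaling, while the local-maximum property of $J_v$ at $t=1$ disposes of the $t_\e$-variation.
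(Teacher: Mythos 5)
Your proposal is correct and follows essentially the same route as the paper's proof: for $u$ the one-sided difference quotient from Lemma \ref{lemm3}(i) combined with monotone convergence for the concave singular term, and for $v$ the chain $I_\la(t_\e(v+\e w))\ge I_\la(v)\ge I_\la(t_\e v)$ (your decomposition into two brackets is just this chain rearranged) followed by the same limit argument using $t_\e\to 1$ and the homogeneity of each term. The only cosmetic difference is that you make explicit the justification $I_\la(t_\e v)=J_v(t_\e)\le J_v(1)$ via the local-maximum property of the fibering map at $t=1$ for $v\in N_\la^-$, which the paper uses implicitly.
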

\begin{proof}
	Let $w\in W^{1,p}_0(\Om)_+$, then by Lemma \ref{lemm3}(i), for each $\e\in(0,\e_0)$, we have
	\begin{align*}
	 0\le\frac{I_\la(u+\e w)-I_\la(u)}{\e}=&\frac{1}{p\e}\int_\Om\big(|\na(u+\e w)|^p-|\na u|^p\big)+\ba \frac{1}{q\e}\int_\Om\big(|\na(u+\e w)|^q-|\na u|^q\big)\\
	&-\frac{\la}{(1-\de)\e}\int_{\Om}\big(|u+\e w|^{1-\de} -|u|^{1-\de}\big)dx-\frac{1}{r\e}\int_{\Om}(|u+\e w|^r-|u|^r)dx.
	\end{align*}
	It can be easily verified that as $\e\ra 0^+$
	\begin{align*}
	&\frac{1}{p\e}\int_\Om\big(|\na(u+\e w)|^p-|\na u|^p\big)dx \lra \int_{\Om}|\na u|^{p-2}\na u \na wdx \\
	&\frac{1}{q\e}\int_\Om\big(|\na(u+\e w)|^q-|\na u|^q\big)dx \lra \int_{\Om}|\na u|^{q-2}\na u \na wdx, \ \mbox{and} \\
	&\frac{1}{r\e}\int_{\Om}(|u+\e w|^r-|u|^r)dx\lra \int_\Om |u|^{r-2}uwdx,
	\end{align*}
	which imply that $\frac{|u+\e w|^{1-\de} -|u|^{1-\de}}{(1-\de)\e}\in L^1(\Om)$. For each $x\in\Om$,
	\begin{align*}
	\frac{1}{\e}\left(\frac{|u+\e w|^{1-\de}(x)-|u|^{1-\de}(x)}{1-\de}\right)
	\end{align*}
 increases monotonically as $\e\downarrow 0$ and
	\begin{align*}
	  \lim_{\e\downarrow 0}\frac{|u+\e w|^{1-\de}(x) -|u|^{1-\de}(x)}{(1-\de)\e}=\begin{cases}
	   0, \quad \mbox{if }w(x)=0,\\
	   (u(x))^{-\de}w(x) \quad \mbox{if }0<w(x), u(x)>0 \\
	   \infty \quad \mbox{if }w(x)>0, u(x)=0.
	 \end{cases}
	\end{align*}
	So, by using the monotone convergence theorem, we get $u^{-\de}w\in L^1(\Om)$ and
	\begin{align*}
	 \int_{\Om}\big(|\na u|^{p-2}\na u\na w+\ba|\na u|^{q-2}\na u\na w-\la u^{-\de}w-u^{r-1}w\big)dx\ge 0.\end{align*}
	Next, we will show these properties for $v$. For each $\e>0$, there exists $t_\e>0$ such that $t_\e(v+\e w)\in N_\la^-$. By Lemma \ref{lemm3}(ii), for sufficiently small $\e>0$, we have
	\begin{align*}
	     I_\la(t_\e(v+\e w))\ge I_\la(v)\ge I_\la(t_\e v).
	\end{align*}
	Therefore
	    $\displaystyle \frac{I_\la(v+\e w)-I_\la(t_\e v)}{\e} \ge 0,$
	which implies that
	\begin{align*}
	    \frac{\la t_\e^{1-\de}}{\e} \int_\Om (|v+\e w|^{1-\de}-|v|^{1-\de})dx\le &\frac{t_\e^p}{p\e}\int_\Om\big(|\na(v+\e w)|^p-|\na v|^p\big)dx\\
	    &\quad +\ba \frac{t_\e^q}{q\e}\int_\Om\big(|\na(v+\e w)|^q-|\na v|^q\big)dx\\
	    &\quad -\frac{t_\e^r}{r\e}\int_{\Om}(|v+\e w|^r-|v|^r)dx.
	\end{align*}
  Since $t_\e\ra 1$ as $\e\downarrow 0$, using similar arguments as in the previous case, we obtain $v^{-\de}w\in L^1(\Om)$ and
	$$\displaystyle
	\int_{\Om}\big(|\na v|^{p-2}\na v\na w+\ba|\na v|^{q-2}\na v\na w-\la v^{-\de}w-v^{r-1}w\big)dx\ge 0.  $$ \QED
\end{proof}
\begin{Theorem}\label{thmA}
	Suppose $u\in N_\la^+$ and $v\in N_\la^-$ are minimizers of $I_\la$ on $N_\la^+$ and $N_\la^-$, respectively. Then $u$ and $v$ are weak solutions of problem $(P_\la)$.
\end{Theorem}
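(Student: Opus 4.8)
The plan is to upgrade the one-sided variational inequalities furnished by Lemma~\ref{lemm4} into the full weak formulation \eqref{eqbw}, and to do this for $u$ and $v$ by one and the same argument, since Lemma~\ref{lemm4} supplies exactly the inequality
\[
\ld A(u),w\rd:=\int_\Om\big(|\na u|^{p-2}\na u+\ba|\na u|^{q-2}\na u\big)\na w\,dx\ge\int_\Om(\la u^{-\de}+u^{r-1})w\,dx
\]
for every $w\in W^{1,p}_0(\Om)_+$, and likewise for $v$. I will treat $u$; the case of $v$ is verbatim because the inequality of Lemma~\ref{lemm4} and the Nehari identity below hold for $v\in N_\la^-$ as well. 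First I would record two consequences of what precedes. (a) The displayed inequality says precisely that $u$ is a weak supersolution of $(P_\la)$, so the comparison principle of Lemma~\ref{lemm18}, applied against the purely singular solution $\ul u_\la$ of Lemma~\ref{lemm10}, gives $u\ge\ul u_\la\ge\e_\la\hat\phi>0$ a.e. in $\Om$; in particular $u>0$ a.e. (b) Since $u\in N_\la$, evaluating \eqref{eqb21} at $t=1$ yields $\int_\Om(|\na u|^p+\ba|\na u|^q)\,dx=\int_\Om(\la u^{1-\de}+u^r)\,dx$, that is, testing the operator against $u$ itself reproduces the right-hand side paired with $u$.

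The key step is the choice of test function. Fix an arbitrary $\psi\in W^{1,p}_0(\Om)$ and, for $\e>0$, insert $w=(u+\e\psi)_+\in W^{1,p}_0(\Om)_+$ into the inequality of Lemma~\ref{lemm4} (its singular integral being finite by the conclusion $u^{-\de}w\in L^1(\Om)$ of that lemma). Writing $(u+\e\psi)_+=(u+\e\psi)+(u+\e\psi)_-$ and cancelling the leading terms by means of identity (b), and then discarding the nonnegative quantity $\int_\Om(\la u^{-\de}+u^{r-1})(u+\e\psi)_-\,dx\ge0$, I obtain
\[
\e\,\ld A(u),\psi\rd+\ld A(u),(u+\e\psi)_-\rd\ge\e\int_\Om(\la u^{-\de}+u^{r-1})\psi\,dx.
\]

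The main obstacle is to control the correction term $\ld A(u),(u+\e\psi)_-\rd$ and to show it is $o(\e)$. On the set $\Om_\e:=\{u+\e\psi<0\}$ one has $\na(u+\e\psi)_-=-\na u-\e\na\psi$, whence
\[
\ld A(u),(u+\e\psi)_-\rd=-\int_{\Om_\e}\!\big(|\na u|^p+\ba|\na u|^q\big)\,dx-\e\int_{\Om_\e}\!\big(|\na u|^{p-2}\na u+\ba|\na u|^{q-2}\na u\big)\na\psi\,dx,
\]
and, since the first integral is nonnegative, this is bounded above by $\e\int_{\Om_\e}(|\na u|^{p-1}+\ba|\na u|^{q-1})|\na\psi|\,dx$. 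Because $u>0$ a.e. by (a), the sets $\Om_\e$ satisfy $|\Om_\e|\to0$ as $\e\to0^+$, while $(|\na u|^{p-1}+\ba|\na u|^{q-1})|\na\psi|\in L^1(\Om)$ by H\"older's inequality (using $q<p$ and $\na u,\na\psi\in L^p$); absolute continuity of the integral then forces $\int_{\Om_\e}(\cdots)\,dx\to0$, so indeed $\ld A(u),(u+\e\psi)_-\rd\le\e\,o(1)$.

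Substituting this bound, dividing the inequality by $\e>0$, and letting $\e\to0^+$ yields $\ld A(u),\psi\rd\ge\int_\Om(\la u^{-\de}+u^{r-1})\psi\,dx$ for every $\psi\in W^{1,p}_0(\Om)$; replacing $\psi$ by $-\psi$ gives the reverse inequality, hence equality, which is exactly \eqref{eqbw}, so $u$ is a weak solution. Running the identical argument with $v$ in place of $u$ shows $v$ is a weak solution as well. I expect the delicate points to be only the $o(\e)$ estimate just described and the bookkeeping of the singular term, both of which are already prepared by Lemmas~\ref{lemm4}, \ref{lemm10} and~\ref{lemm18}.
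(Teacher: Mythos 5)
Your proof is correct and follows essentially the same route as the paper: testing the one-sided inequality of Lemma~\ref{lemm4} with $(u+\e\psi)_+$, cancelling via the Nehari identity $J_u'(1)=0$, discarding the sign-definite singular term, and showing the contribution over $\{u+\e\psi<0\}$ is $o(\e)$ by absolute continuity of the integral before dividing by $\e$ and replacing $\psi$ with $-\psi$. The only (welcome) addition is that you justify $|\{u+\e\psi<0\}|\to0$ explicitly via $u\ge\ul u_\la>0$ from Lemmas~\ref{lemm10} and~\ref{lemm18}, a point the paper asserts without comment.
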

\begin{proof}
	Let $\phi\in W^{1,p}_0(\Om)$. For $\e>0$, define $\psi\in W^{1,p}_0(\Om)$ by
	\begin{align*}
	    \psi\equiv (u+\e \phi)^+\ge 0.
	\end{align*}
  Set $\Om_+=\{x\in\Om:u(x)+\e\phi(x)\ge 0\}$, then using Lemma \ref{lemm4} and the fact $u\in N_\la$, we deduce that
	\begin{equation}
	\begin{aligned}
	0&\le \int_{\Om}\big(|\na u|^{p-2}\na u\na\psi+\ba|\na u|^{q-2}\na u\na\psi-\la u^{-\de}\psi-u^{r-1}\psi\big)dx\\
	&= \int_{\Om_+}\big(|\na u|^{p-2}\na u\na (u+\e\phi)+\ba|\na u|^{q-2}\na u\na (u+\e\phi)-\la u^{-\de}(u+\e\phi)-u^{r-1}(u+\e\phi)\big)dx\\
	&=\left(\int_{\Om}-\int_{ \{u+\e\phi\le 0\} }\right)\big(|\na u|^{p-2}\na u\na (u+\e\phi)+\ba|\na u|^{q-2}\na u\na (u+\e\phi) \\
	&\qquad\qquad\qquad\qquad\qquad-\la u^{-\de}(u+\e\phi)-u^{r-1}(u+\e\phi)\big)dx\\
	&=\int_{\Om}\big(|\na u|^p+\ba|\na u|^q-\la u^{1-\de}-u^r\big)dx\\
	&\quad+\e \int_{\Om}\big(|\na u|^{p-2}\na u\na\phi+\ba|\na u|^{q-2}\na u\na\phi-\la u^{-\de}\phi-u^{r-1}\phi\big)dx \\
	&-\int_{\{u+\e w<0\}}\big(|\na u|^{p-2}\na u\na (u+\e\phi)+\ba|\na u|^{q-2}\na u\na (u+\e\phi)-(\la u^{-\de}-u^{r-1})(u+\e\phi)\big)dx\\
	&\le \e \int_{\Om}\big(|\na u|^{p-2}\na u\na\phi+\ba|\na u|^{q-2}\na u\na\phi-\la u^{-\de}\phi-u^{r-1}\phi\big)dx\\\label{eqb33}
	&\quad-\e\int_{\{u+\e\phi<0\}}\big(|\na u|^{p-2}\na u\na \phi+\ba|\na u|^{q-2}\na u\na\phi\big)dx.
	\end{aligned}\end{equation}
	Since the measure of $\{u+\e\phi<0\}$ tends to $0$ as $\e\ra 0$, it follows that
	\begin{align*}
	\int_{\{u+\e\phi<0\}}\big(|\na u|^{p-2}\na u\na \phi+\ba|\na u|^{q-2}\na u\na\phi\big)dx\ra 0 \quad \mbox{as } k\ra 0.
	\end{align*}
	Dividing by $\e$ and letting $\e\ra 0$ in \eqref{eqb33}, we obtain
	\begin{align*}
	\int_{\Om}\big(|\na u|^{p-2}\na u\na\phi+\ba|\na u|^{q-2}\na u\na\phi-\la u^{-\de}\phi-u^{r-1}\phi\big)dx\ge 0.
	\end{align*}
	Since $\phi$ was arbitrary, this holds for $-\phi$ also. Hence, for all $\phi\in W^{1,p}_0(\Om)$, we have
	\begin{align*}
	  \int_{\Om}\big(|\na u|^{p-2}\na u\na\phi+\ba|\na u|^{q-2}\na u\na\phi-\la u^{-\de}\phi-u^{r-1}\phi\big)dx=0,
	\end{align*}
	that is $u$ is a weak solution of $(P_\la)$ and analogous arguments hold for $v$ also.\QED
\end{proof}

\section{Multiplicity results}
\subsection{subcritical case ($r<p^*$)}
In this section we prove existence and multiplicity results for weak solutions of $(P_\la)$ in the subcritical case.
\begin{Proposition}\label{prop1}
	For all $\la\in (0,\la_*)$ and $\ba>0$, there exist $u\in N_\la^+$ and $v\in N_\la^-$ such that $I_\la(u)=\theta_{\la}^+$ and $I_\la(v)=\theta_{\la}^-$.
\end{Proposition}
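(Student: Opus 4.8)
The plan is to obtain each minimizer by the direct method, exploiting the \emph{compact} embedding $W^{1,p}_0(\Om)\hookrightarrow L^m(\Om)$ for $m<p^*$ that is available in the subcritical regime $r<p^*$. Fix $\la\in(0,\la_*)$ and $\ba>0$. For the $N_\la^+$ part I would take a minimizing sequence $\{u_k\}\subset N_\la^+$ with $I_\la(u_k)\to\theta_\la^+$; replacing $u_k$ by $|u_k|$ (which changes neither $I_\la$ nor membership in $N_\la^+$) I may assume $u_k\ge0$. By Lemma \ref{lemm1}(i) the sequence is bounded, so after passing to a subsequence $u_k\rightharpoonup u\ge0$ in $W^{1,p}_0(\Om)$, $u_k\to u$ in $L^r(\Om)$ and $L^{1-\de}(\Om)$ and a.e., and the gradient norms converge, say $\|\na u_k\|_p^p\to\ell_p$ and $\|\na u_k\|_q^q\to\ell_q$. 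Since $\theta_\la^+<0$ by Lemma \ref{lemm2}, the limit is nontrivial: if $u=0$ the lower-order terms would vanish and $I_\la(u_k)$ would be asymptotically nonnegative, contradicting $I_\la(u_k)\to\theta_\la^+<0$.

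The heart of the argument is to upgrade weak convergence to strong convergence $u_k\to u$, i.e. to show $\ell_p=\|\na u\|_p^p$ and $\ell_q=\|\na u\|_q^q$. I would argue by contradiction, assuming a strict drop in at least one gradient norm. The device is the comparison map
\[
\Phi(t):=\lim_{k\to\infty}J_{u_k}(t)=\frac{t^p}{p}\ell_p+\ba\frac{t^q}{q}\ell_q-\frac{\la t^{1-\de}}{1-\de}\int_\Om u^{1-\de}\,dx-\frac{t^r}{r}\int_\Om u^r\,dx,
\]
which has exactly the structure of a fibering map, with associated $M_\Phi(t)=t^{p-1+\de}\ell_p+\ba t^{q-1+\de}\ell_q-t^{r-1+\de}\int_\Om u^r$. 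Passing to the limit in $J'_{u_k}(1)=0$ and $J''_{u_k}(1)>0$ gives $\Phi'(1)=0$ and $\Phi''(1)\ge0$; since $\ell_p\ge\|\na u\|_p^p$ and $\ell_q\ge\|\na u\|_q^q$ we have $M_u\le M_\Phi$ pointwise, so $M_\Phi$ attains a maximum strictly above $\la\int_\Om u^{1-\de}$ (using $\la<\la_*$) and therefore has two simple zeros. As $\Phi'(1)=0$ forces $1$ to be one of these zeros and $\Phi''(1)\ge0$, non-degeneracy pins $t=1$ to the \emph{smaller} zero $\ul t_\Phi$, with $\Phi(1)=\theta_\la^+$. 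From $M_u\le M_\Phi$ the super-level set $\{\,t:M_u(t)\ge\la\int_\Om u^{1-\de}\,\}=[\ul t(u),\ov t(u)]$ is contained in $\{\,t:M_\Phi(t)\ge\la\int_\Om u^{1-\de}\,\}=[1,\ov t_\Phi]$, whence $\ul t(u)\ge1$. On the other hand a strict gradient drop gives $I_\la(u)=J_u(1)<\Phi(1)=\theta_\la^+$; but $J_u$ is strictly decreasing on $[0,\ul t(u)]\ni1$, so $J_u(1)\ge J_u(\ul t(u))=I_\la(\ul t(u)u)\ge\theta_\la^+$ because $\ul t(u)u\in N_\la^+$ (the construction preceding Lemma \ref{lemm1}), a contradiction. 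Hence $u_k\to u$ strongly, so $J'_u(1)=\lim J'_{u_k}(1)=0$, i.e. $u\in N_\la$; moreover $J''_u(1)=\lim J''_{u_k}(1)\ge0$ and, since $N_\la^0=\emptyset$ for $\la<\la_*$, in fact $J''_u(1)>0$, giving $u\in N_\la^+$ with $I_\la(u)=\theta_\la^+$.

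For the $N_\la^-$ part the scheme is dual. A minimizing sequence $\{v_k\}\subset N_\la^-$ with $I_\la(v_k)\to\theta_\la^-$ is bounded by Lemma \ref{lemm1}(ii); its weak limit $v$ is nonzero because $\|v_k\|$ is bounded below (Lemma \ref{lemm1}(ii)), while the Nehari identity $\|\na v_k\|_p^p+\ba\|\na v_k\|_q^q=\la\int_\Om v_k^{1-\de}+\int_\Om v_k^r$ would force $\|v_k\|\to0$ if its right-hand side vanished. Running the same comparison, now $\Phi''(1)\le0$ pins $t=1$ to the \emph{larger} zero $\ov t_\Phi$, and $M_v\le M_\Phi$ yields $\ov t(v)\le1$ with $\ov t(v)\in[\ul t_\Phi,\ov t_\Phi]$. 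If a gradient norm dropped strictly, then using that $\Phi$ is increasing on $[\ul t_\Phi,\ov t_\Phi]$ we would get $I_\la(\ov t(v)v)=J_v(\ov t(v))<\Phi(\ov t(v))\le\Phi(1)=\theta_\la^-$, contradicting $\ov t(v)v\in N_\la^-$. Strong convergence then gives $v\in N_\la^-$ and $I_\la(v)=\theta_\la^-$ exactly as before.

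The main obstacle is precisely this compactness step: weak limits of Nehari-manifold sequences need not remain on the manifold, and the singular term $u^{-\de}$ obstructs a naive use of smoothness of $I_\la$. The comparison-map argument---locating $t=1$ as a simple zero of $\Phi$ (simplicity being automatic, since $M_\Phi(1)=\la\int_\Om u^{1-\de}$ rules out $t=1$ being the interior maximum of $M_\Phi$) and nesting the super-level sets of $M_u$ and $M_\Phi$---is what converts the one-sided semicontinuity of the gradient norms into the position information $\ul t(u)\ge1$ (respectively $\ov t(v)\le1$) that closes the contradiction. The fact that $\theta_\la^+<0$ and that $N_\la^0=\emptyset$ are what guarantee, respectively, nontriviality of the $N_\la^+$ limit and the strict sign of $J''$ needed to place the limit in the correct component.
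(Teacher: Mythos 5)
Your argument is correct and follows essentially the same route as the paper: a bounded minimizing sequence, nontriviality of the weak limit from $\theta_\la^+<0$ (resp.\ the norm lower bound of Lemma \ref{lemm1}(ii)), and strong convergence via a contradiction built on the limit fibering map $\Phi(t)=\lim_k J_{u_k}(t)$, which by Brezis--Lieb is exactly the paper's auxiliary function $f(t)=J_u(t)+t^pa_1^p/p+\ba t^qa_2^q/q$. The only (cosmetic) difference is that you locate $t=1$ as the smaller/larger root of $M_\Phi=\la\int|u|^{1-\de}$ and use the super-level-set nesting $M_u\le M_\Phi$ to get $\ul t(u)\ge1$ (resp.\ $\ov t(v)\le1$) in one stroke, whereas the paper runs the equivalent case analysis ($1<\ul s$ versus $\ov s<1$) and the monotonicity of $f$ on $[\ov s,1]$ explicitly.
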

\begin{proof}
	Let $\{u_k\}\subset N_\la^+$ be such that $I_\la(u_k)\ra \theta_{\la}^+$ as $k\ra\infty$. By Lemma \ref{lemm1}(i), $\{u_k\}$ is bounded in $W^{1,p}_0(\Om)$, therefore without loss of generality we may assume there exists $u\in W^{1,p}_0(\Om)$ such that $u_k\rightharpoonup u$ weakly in $W^{1,p}_0(\Om)$ and $u_k(x)\ra u(x)$ a.e. in $\Om$. We claim that $u\neq 0$.
	Suppose $u=0$, then by Lemma \ref{lemm2}, we have \begin{align*}
	  0=I_\la(u)\le\ds\varliminf_{k\ra\infty} I_\la(u_k)=\theta_{\la}^+<0,\end{align*} which is a contradiction.
	 Now we will show that $u_k\ra u$ strongly in $W^{1,p}_0(\Om)$. On the contrary assume $\|\na(u_k-u)\|_p\ra a_1>0$ and $\|\na(u_k-u)\|_q\ra a_2$. By Brezis-Lieb lemma and Sobolev embeddings, we have
	  \begin{equation}\label{eqb31}
		 0=\lim_{k\ra\infty}J^\prime_{u_k}(1)=J^\prime_u(1)+a_1^p+a_2^q.
	  \end{equation}
  Since $\la\in(0,\la_*)$, by fibering map analysis there exist $0<\ul s<\ov s$ such that $J^\prime_u(\ul s)=0=J^\prime_u(\ov s)$ and $\ul su\in N_\la^+$. By \eqref{eqb31}, we get $J^\prime_u(1)<0$ which gives us $1<\ul s$ or $\ov s<1$. When $1<\ul s$, we have
    \begin{align*}
    	\theta_{\la}^+\ge J_u(1)+\frac{a_1^p}{p}+\frac{a_2^q}{q}>J_u(1)>J_u(\ul s)\ge \theta_{\la}^+,
    \end{align*}
   which is a contradiction. Thus, we have $\ov s>1$. We set $f(t)=J_u(t)+ t^p\frac{a_1^p}{p}+ t^q\frac{a_2^q}{q}$ for $t>0$. With the help of \eqref{eqb31}, we get $f^\prime(1)=0$ and $f^\prime(\ov s)=a_1^p \ov s^{p-1}+a_2^q \ov s^{q-1}>0$. So, $f$ is increasing in $[\ov s, 1]$, thus we obtain
    \begin{align*}
    	\theta_{\la}^+\ge f(1)> f(\ov s)> J_u(\ov s)> J_u(\ul s)\ge \theta_{\la}^+,
    \end{align*}
    which is also a contradiction. Hence we have $a_1=0$ that is, $u_k\ra u$ strongly in $W^{1,p}_0(\Om)$. Since $\la\in (0,\la_*)$, we get $J^{\prime\prime}_u(1)>0$, this implies that $u\in N_\la^+$ and $I_\la(u)=\theta_{\la}^+$.

  Now we will show that there exists $v\in N_\la^-$ such that $I_\la(v)=\theta_{\la}^-$. Let $\{v_k\}\subset N_\la^-$ be such that $I_\la(v_k)\ra \theta_{\la}^-$ as $k\ra\infty$. By Lemma \ref{lemm1}(ii), we may assume there exists $v\in W^{1,p}_0$ such that $v_k\rightharpoonup v$ (upto subsequence) weakly in $W^{1,p}_0(\Om)$ and $v_k(x)\ra v(x)$ a.e. in $\Om$. We will show that $v\neq 0$. If $v=0$, then $v_k$ converges to $0$ strongly in $W^{1,p}_0(\Om)$ which contradicts Lemma \ref{lemm1}(ii). We will show that $v_k\ra v$ strongly in $W^{1,p}_0(\Om)$. Suppose not, then we may assume $\|\na(v_k-v)\|_p\ra b_1>0$ and $\|\na(v_k-v)\|_q\ra b_2$. By Brezis-Lieb lemma and Sobolev embeddings, we have
  \begin{equation}\label{eqb32}
  	\theta_{\la}^-\ge I_\la(v)+\frac{b_1^p}{p}+\frac{b_2^q}{q}, \  J^\prime_v(1)+b_1^p+b_2^q=0 \quad \mbox{and } J^{\prime\prime}_v(1)+b_1^p+b_2^q\le 0.
  \end{equation}
  Since $\la\in(0,\la_*)$, $J_v^\prime(1)<0$ and $J^{\prime\prime}_v(1)<0$, there exists $\ov t\in(0,1)$ such that $\ov tv\in N_\la^-$. Set $g(t)=J_v(t)+\frac{b_1^p}{p}t^p+\frac{b_2^q}{q}t^q$ for $t>0$.  From \eqref{eqb32}, we get $g'(1)=0$ and $g'(\ov t)=b_1^p\ov t^{p-1}+b_2^q\ov t^{q-1}>0$. So, $g$ is increasing on $[\ov t,1]$ and thus we obtain
    \begin{align*}
    	\theta_{\la}^-\ge g(1)>g(\ov t)>I_\la(\ov tv)\ge \theta_{\la}^-,
    \end{align*}
   which is a contradiction. Hence, $b_1=0$ and $v_k\ra v$ strongly in $W^{1,p}_0(\Om)$. Since $\la\in(0,\la_*)$, we have $J^{\prime\prime}_v(1)<0$. Thus $v\in N_\la^-$ and $I_\la(v)=\theta_{\la}^-$.\QED
\end{proof}
 \textbf{Proof of Theorem \ref{thm1}:}  Proof follows from Proposition \ref{prop1} and Theorem \ref{thmA}.\QED

\subsection{Critical Case}
Let $\tilde{\la}_*:=\ds\sup\bigg\{\la>0: \ds\sup\{\|u\|^p:u\in N_\la^+\}\le\Big(\frac{p^*}{p}\Big)^\frac{p}{p^*-p}S^\frac{p^*}{p^*-p}\bigg\}$, then by Lemma \ref{lemm1}(i) we can see that $\tilde{\la}_*>0$. Set $\La=\ds\min\{\la_*,\tilde{\la}_*\}>0$.

\begin{Proposition}\label{prop2}
	For all $\la\in(0,\La)$ and $\ba>0$, there exists  $u_\la\in N_\la^+$ such that $\theta_{\la}^+=I_\la(u_\la)$.
\end{Proposition}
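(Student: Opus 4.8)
The plan is to prove Proposition \ref{prop2} by the direct method on $N_\la^+$, mirroring Proposition \ref{prop1}; the single new feature is the loss of compactness of $W^{1,p}_0(\Om)\hookrightarrow L^{p^*}(\Om)$ at $r=p^*$. First I would take a minimizing sequence $\{u_k\}\subset N_\la^+$ with $I_\la(u_k)\ra\theta_\la^+$. By Lemma \ref{lemm1}(i) it is bounded, so up to a subsequence $u_k\rightharpoonup u_\la$ in $W^{1,p}_0(\Om)$, $u_k\ra u_\la$ a.e., and, since the embeddings below $p^*$ are compact, $\int_\Om|u_k|^{1-\de}\ra\int_\Om|u_\la|^{1-\de}$. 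To see $u_\la\neq 0$ I use the Nehari form $I_\la(u_k)=(\tfrac1p-\tfrac1{p^*})\|\na u_k\|_p^p+\ba(\tfrac1q-\tfrac1{p^*})\|\na u_k\|_q^q-\la(\tfrac1{1-\de}-\tfrac1{p^*})\int_\Om|u_k|^{1-\de}$: if $u_\la=0$ the singular term vanishes while the first two (nonnegative) terms survive, forcing $\liminf_k I_\la(u_k)\ge 0$, which contradicts $\theta_\la^+<0$ (Lemma \ref{lemm2}, valid since $\La\le\la_*$).

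The heart of the matter is upgrading weak to strong convergence. Setting $a_1=\lim\|\na(u_k-u_\la)\|_p$, $a_2=\lim\|\na(u_k-u_\la)\|_q$ and $c=\lim\int_\Om|u_k-u_\la|^{p^*}$ (along a further subsequence), the Brezis--Lieb lemma splits $\|\na u_k\|_p^p$, $\|\na u_k\|_q^q$ and $\int_\Om|u_k|^{p^*}$, giving the two limiting identities $J'_{u_\la}(1)+a_1^p+\ba a_2^q-c=0$ and $\theta_\la^+=I_\la(u_\la)+\tfrac1p a_1^p+\tfrac{\ba}{q}a_2^q-\tfrac1{p^*}c$. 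Two ingredients control $c$: the Sobolev inequality applied to $u_k-u_\la$ gives $c\le S^{-p^*/p}a_1^{p^*}$; and since for $\la<\tilde\la_*$ every element of $N_\la^+$ satisfies $\|u\|^p\le (\tfrac{p^*}{p})^{p/(p^*-p)}S^{p^*/(p^*-p)}=:K$ (the defining property of $\tilde\la_*$, hence of $\La$), passing to the limit in $\|u_k\|^p=\|\na u_\la\|_p^p+\|\na(u_k-u_\la)\|_p^p+o(1)$ yields $\|u_\la\|^p+a_1^p\le K$, whence $a_1^p<K$ because $u_\la\neq 0$.

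I would then argue by contradiction: suppose $a_1>0$. Inserting $c\le S^{-p^*/p}a_1^{p^*}$ into the energy identity and using $a_1^{p^*-p}<K^{(p^*-p)/p}=\tfrac{p^*}{p}S^{p^*/p}$ (from $a_1^p<K$), the bracket $\tfrac1{p^*}S^{-p^*/p}a_1^{p^*-p}-\tfrac1p$ is strictly negative, so $I_\la(u_\la)<\theta_\la^+$; that is, concentration strictly lowers the energy below the infimum, and this is the quantitative role of the threshold. On the other hand, since $u_\la\neq 0$ and $\la<\la_*$, the fibering analysis preceding Lemma \ref{lemm1} provides a unique $\ul s>0$ with $\ul s\,u_\la\in N_\la^+$ and $J_{u_\la}(\ul s)=\min_{0<t\le\ov s}J_{u_\la}(t)\ge\theta_\la^+$, where $\ov s>\ul s$ is the subsequent local maximum point ($\ov s\,u_\la\in N_\la^-$). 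Hence $J_{u_\la}(1)=I_\la(u_\la)<\theta_\la^+\le J_{u_\la}(\ul s)$, which forces $1>\ov s$: the weak limit would have to be over-scaled, past the local maximum of its own fibering map.

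The step I expect to be the main obstacle is precisely excluding this over-scaled alternative $1>\ov s$. In the subcritical Proposition \ref{prop1} one has $c=0$, so $J'_{u_\la}(1)=-a_1^p-\ba a_2^q<0$ locates $t=1$ immediately; here $J'_{u_\la}(1)=c-a_1^p-\ba a_2^q$ has indeterminate sign, and this is exactly where the critical exponent obstructs a routine argument. I would attempt to rule out $1>\ov s$ by exploiting the uniform bound $\|u\|^p\le K$ on $N_\la^+$ for $\la<\La$ together with the monotonicity of $t\mapsto M_{u_\la}(t)$ from Section 3 (increasing on $(0,t_{max})$, decreasing afterwards, with roots $\ul s<\ov s$), the value $K$ being calibrated so that $t=1$ cannot sit on the far decreasing branch; alternatively, the concentration can be excluded by a Brezis--Lieb/concentration--compactness estimate showing that any escaping mass carries energy at least $\tfrac1n S^{n/p}>0>\theta_\la^+$. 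Either way the contradiction gives $a_1=0$, hence $u_k\ra u_\la$ strongly; then $u_\la\in N_\la$ with $I_\la(u_\la)=\theta_\la^+$, and since $N_\la^0=\emptyset$ for $\la<\la_*$ while $J''_{u_\la}(1)=\lim_k J''_{u_k}(1)\ge 0$, we conclude $u_\la\in N_\la^+$, as claimed.
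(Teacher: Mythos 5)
Your overall strategy matches the paper's (minimizing sequence, Brezis--Lieb splitting, fibering-map analysis of the weak limit), and several pieces are correct and in places cleaner than the paper's: your proof that $u_\la\neq 0$ avoids the paper's two-case split, and your quantitative use of the threshold $\|u\|^p\le K:=\big(\frac{p^*}{p}\big)^{p/(p^*-p)}S^{p^*/(p^*-p)}$ on $N_\la^+$ correctly disposes of what the paper treats as its Case (ii). However, the proof is not complete: the configuration you isolate at the end, $a_1>0$ together with $\ov s<1$, is a genuine case (it is exactly the paper's Case (i)), and neither of your two proposed ways of excluding it works as stated. The bound $\|u\|^p\le K$ constrains elements of $N_\la^+$, whereas $u_\la$ is only a weak limit and need not lie on the Nehari manifold, so there is no calibration of $K$ against the location of $\ov s$ for $J_{u_\la}$. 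The concentration--compactness alternative, that escaping mass costs at least $\frac1n S^{n/p}$, relies on the identity $c=a_1^p+\ba a_2^q$, which holds for Palais--Smale sequences (where $J'_{u_\la}(1)=0$) but not for a mere minimizing sequence over $N_\la^+$; here one only knows $J'_{u_\la}(1)=c-a_1^p-\ba a_2^q$, and $c$ could even vanish while $a_1>0$.

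The missing ingredient is the paper's auxiliary function $f(t)=J_{u_\la}(t)+\frac{a_1^p}{p}t^p+\ba\frac{a_2^q}{q}t^q-\frac{c}{p^*}t^{p^*}$, i.e.\ the fibering map of the limiting configuration. The second Brezis--Lieb identity gives $f'(1)=0$, while $\ov s<1$ forces $J'_{u_\la}(1)<0$, hence $a_1^p+\ba a_2^q-c=-J'_{u_\la}(1)>0$ and therefore $f'(\ov s)\ge\ov s^{\,p-1}\big(a_1^p+\ba a_2^q-c\big)>0$, using $\ov s<1$ and $q<p<p^*$. Since $f$ has the same decreasing--increasing--decreasing structure as any fibering map of this type, $f'(\ov s)>0$ together with $f'(1)=0$ forces $f$ to be increasing on $[\ov s,1]$, and then $\theta_{\la}^+=f(1)>f(\ov s)\ge J_{u_\la}(\ov s)+\frac{\ov s^{\,p}}{p}\big(a_1^p+\ba a_2^q-c\big)>J_{u_\la}(\ov s)>J_{u_\la}(\ul s)\ge\theta_{\la}^+$, the desired contradiction. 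With this step inserted your argument closes: $a_1=0$, hence $c=0$ by Sobolev, $a_2=0$ by H\"older, strong convergence follows, and your concluding appeal to $N_\la^0=\emptyset$ gives $u_\la\in N_\la^+$.
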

\begin{proof}
  Let $\{u_k\}\subset N_\la^+$ be such that $I_\la(u_k)\ra \theta_{\la}^+$ as $k\ra\infty$. By Lemma \ref{lemm1}(i), we get $\{u_k\}$ is bounded in $W^{1,p}_0(\Om)$, therefore we may assume there exists $u_\la\in W^{1,p}_0(\Om)$ such that $u_k\rightharpoonup u_\la$ weakly in $W^{1,p}_0(\Om)$ and $u_k(x)\ra u_\la(x)$ a.e. in $\Om$. Set $w_k=u_k-u_\la$. By Brezis-Lieb lemma, we have
  \begin{equation}\label{eqb41}
  \begin{aligned}
  	 &\theta_{\la}^++o_k(1)=I_\la(u_\la)+\frac{1}{p}\int_{\Om}|\na w_k|^p+ \ba\frac{1}{q}\int_{\Om}|\na w_k|^q-\frac{1}{p^*}\int_{\Om}|w_k|^{p^*}dx \ \ \mbox{and } \\ &\int_{\Om}\big(|\na u_\la|^p+|\na w_k|^p\big)+\ba\int_{\Om}\big(|\na u_\la|^q+|\na w_k|^q\big)=\la\int_{\Om}|u_\la|^{1-\de}dx+\int_{\Om}\big(|u_\la|^{p^*}+|w_k|^{p^*}\big)dx.
  	 \end{aligned}
  \end{equation}
  We assume
    \begin{align*}
    	\int_{\Om}|\na w_k|^p\ra l_1^p,\ \int_{\Om}|\na w_k|^q\ra l_2^q \ \ \mbox{and } \int_{\Om}|w_k|^{p^*}\ra d^{p^*}.
    \end{align*}
 We claim that $u_\la\neq 0$. If $u_\la=0$, then we have two cases:\\
  Case(a): $l_1=0$.\\
      By Lemma \ref{lemm2} and \eqref{eqb41}, we have
      \begin{align*}
      	0>\theta_{\la}^+=I_\la(0)=0,
      \end{align*}
      which is a contradiction.\\
  Case(b): $l_1\neq 0$.\\
     In this case \eqref{eqb41} implies
      \begin{align*}
      	 \theta_{\la}^+\ge\frac{1}{p}\big(l_1^p+l_2^q\big)-\frac{1}{p^*}d^{p^*}=\Big(\frac{1}{p}-\frac{1}{p^*}\Big)(l_1^p+l_2^q),
      \end{align*}
       then using the relation $Sd^p\le l_1^p$, we deduce that
         \begin{align*}
         	0>\theta_{\la}^+\ge\frac{1}{n}l_1^p\ge\frac{1}{n}S^\frac{p^*}{p^*-p}>0,
         \end{align*}
    which is also a contradiction, hence $u_\la\neq 0$. Since $\la\in(0,\La)$, there exist $0<\ul s<\ov s$ such that $J_{u_\la}^\prime(\ul s)=0=J_{u_\la}^\prime(\ov s)$ and $\ul su_\la\in N_\la^+$. We consider the following cases: \begin{enumerate}
        \item[(i)] $\ov s<1$,
        \item[(ii)] $\ov s\ge 1$ and $\frac{l_1^p}{p}-\frac{d^{p^*}}{p^*}<0$, and
        \item[(iii)] $\ov s\ge 1$ and $\frac{l_1^p}{p}-\frac{d^{p^*}}{p^*}\ge 0$.\end{enumerate}
    Case (i): Set $f(t)=  J_{u_\la}(t)+\frac{l_1^pt^p}{p}+\ba\frac{l_2^qt^q}{q}-\frac{t^{p^*}d^{p^*}}{p^*}$ for $t>0$.  Using fibering map analysis together with the fact $\ov s<1$ and \eqref{eqb41}, we have
      \begin{align*}
      	f'(1)=0 \ \mbox{and } f'(\ov s)=J_{u_\la}^\prime(\ov s)+\ov s^{p-1}l_1^p+\ba\ov s^{q-1}l_2^q-\ov s^{p^*-1}d^{p^*}\ge\ov s^{p-1}\big(l_1^p+\ba l_2^q-d^{p^*}\big)>0,
      \end{align*}
     which implies that $f$ is increasing on $[\ov s,1]$. Thus,
      \begin{align*}
      	 \theta_{\la}^+= f(1)> f(\ov {s})=J_{u_\la}(\ov {s})+\ov s^p\frac{l_1^p}{p}+\ba\ov s^q\frac{l_2^q}{q}-\ov s^{p^*}\frac{d^{p^*}}{p^*}&\ge J_{u_\la}(\ov {s})+\frac{\ov s^p}{p}\big(l_1^p+\ba l_2^q-d^{p^*}\big) \\
      	 &>J_{u_\la}(\ov s)> J_{u_\la}(\ul s)\ge \theta_{\la}^+ ,
      \end{align*}
      which is a contradiction.\\
      Case(ii): In this case we have $\frac{l_1^p}{p}-\frac{d^{p^*}}{p^*}<0$, then using  $Sd^p\le l_1^p$, and the fact $\la\in(0,\La)$, we deduce that
       \begin{align*}
       	  \sup\{ \|u\|^p  : \ u\in N_\la^+ \}\le \Big(\frac{p^*}{p}\Big)^\frac{p}{p^*-p}S^\frac{p^*}{p^*-p}< l_1^p\le \sup\{ \|u\|^p  : \ u\in N_\la^+ \},
       \end{align*}
      which is also a contradiction.\\
      Case(iii): In this case we have
        \begin{align*}
        	\theta_{\la}^+= J_{u_\la}(1)+\frac{l_1^p}{p}+\ba\frac{l_2^q}{q}-\frac{d^{p^*}}{p^*} \ge J_{u_\la}(1)\ge J_{u_\la}(\ul s)\ge \theta_{\la}^+ ,
        \end{align*}
     which implies that $\frac{l_1^p}{p}+\ba\frac{l_2^q}{q}-\frac{d^{p^*}}{p^*}=0$ and $\ul s=1$. Using \eqref{eqb41} we get $l_1=0=l_2$, hence $u_k\ra u_\la$ strongly in $W^{1,p}_0(\Om)$. Thus, $u_\la\in N_\la^+$ and $I_\la(u_\la)=\theta_\la^+$.\QED
  \end{proof}
  \textbf{Proof of Theorem \ref{thm2}:} Proof of the Theorem follows from Proposition \ref{prop2} and Theorem \ref{thmA}.\QED
  	
 Next we will show that there exists $v_\la\in N_\la^-$ such that $I_\la(v_\la)=\theta_{\la}^-$. Without loss of generality we assume $0\in\Om$. Let $\zeta\in C_c^\infty(\Om)$ such that $0\le\zeta\le 1$ in $\Om$, $\zeta(x)=1$ in $B_\mu(0)$ and $\zeta\equiv 0$ in $B_{2\mu}^c(0)$, for some $\mu>0$. Let
  \begin{align*}
  	 U_\e(x)=C_n\frac{\e^\frac{n-p}{p(p-1)}}{\big(\e^\frac{p}{p-1}+|x|^\frac{p}{p-1}\big)^\frac{n-p}{p}},
  \end{align*}
  where $\e>0$ and $C_n$ is a normalizing constant. Set $u_\e(x)=U_\e(x)\zeta(x)$ for all $x\in\Om$. Owing to regularity results we see that there exist $m,M>0$ such that $m\le u_\la(x)\le M$ for all $x\in B_{2\mu}(0)$.
  \begin{Lemma}\label{lemm8}
  	Let $\frac{2n}{n+2}<p<3$, then there exists $\ba_*>0$ such that for all  $\la\in(0,\La)$, $\ba\in(0,\ba_*)$ and sufficiently small $\e>0$,
  	 \begin{align*}
  	 	\sup\{I_\la(u_\la+tu_\e): t\ge 0\}< I_\la(u_\la)+\frac{1}{n}S^\frac{n}{p}.
  	 \end{align*}
  \end{Lemma}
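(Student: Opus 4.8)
The plan is to test the functional along the ray emanating from the local minimiser $u_\la$ in the direction of the concentrating bubble $u_\e$, and to reduce the claim to a sharp Brezis--Nirenberg type energy estimate. First I set $\gamma(t):=I_\la(u_\la+tu_\e)$ for $t\ge 0$, so that $\gamma(0)=I_\la(u_\la)$. Since $p^*>\max\{p,q,1-\de\}$, the critical term forces $\gamma(t)\to-\infty$ as $t\to\infty$, hence the supremum is attained at some finite $t_\e\ge 0$. The first reduction is to show that $t_\e$ stays in a compact interval $[t_0,t_1]\subset(0,\infty)$ that is independent of $\e$ and of $\ba\in(0,1]$: to leading order the energy along the ray is governed by $\frac{t^p}{p}S^{n/p}-\frac{t^{p^*}}{p^*}S^{n/p}$, whose maximiser is uniformly bounded and bounded away from $0$, and the contributions of $u_\la$, of the $q$-term, and of the singular term only perturb this balance. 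This lets me work with $t$ ranging over a fixed compact set throughout.

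Next I record the classical asymptotics for the truncated extremal $u_\e=\zeta U_\e$ as $\e\to 0^+$,
\[
\|\na u_\e\|_p^p=S^{n/p}+O\!\big(\e^{\frac{n-p}{p-1}}\big),\quad
\|u_\e\|_{p^*}^{p^*}=S^{n/p}+O\!\big(\e^{\frac{n}{p-1}}\big),
\]
together with the orders of $\|\na u_\e\|_q^q$ and of the interaction integrals $\int_\Om u_\la^{p^*-1}u_\e$ and $\int_\Om u_\la\,u_\e^{p^*-1}$; these use only the bounds $m\le u_\la\le M$ on $B_{2\mu}(0)$ (cf.\ \cite{giacomoni}). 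The crucial observation is that the lower bound $p>\frac{2n}{n+2}$ is \emph{exactly} equivalent to $p^*>2$, so the convexity inequality $(a+b)^{p^*}\ge a^{p^*}+b^{p^*}+p^*\big(a^{p^*-1}b+a\,b^{p^*-1}\big)$ holds for all $a,b\ge 0$. Applying it to bound $\int(u_\la+tu_\e)^{p^*}$ from below produces, beyond the first-order interaction, the favourable negative term $-t^{p^*-1}\int u_\la u_\e^{p^*-1}\le -m\,t^{p^*-1}\int_{B_\mu}u_\e^{p^*-1}$, which is of order $\e^{(n-p)/p}$ and is the gain that should push the supremum strictly below $I_\la(u_\la)+\frac1nS^{n/p}$.

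The heart of the argument is the expansion of $\gamma(t)-\gamma(0)$. I would expand $\|\na(u_\la+tu_\e)\|_p^p$ and $\|\na(u_\la+tu_\e)\|_q^q$ from above via the elementary inequalities for $|a+b|^s$ (treating $s\ge 2$ and $1<s<2$ separately), keep the first-order cross terms $pt\int|\na u_\la|^{p-2}\na u_\la\na u_\e$ and $\ba qt\int|\na u_\la|^{q-2}\na u_\la\na u_\e$, and bound the critical and singular terms by convexity and concavity respectively. Because $u_\la$ is a weak solution of $(P_\la)$ (Theorem \ref{thmA}), the full first-order coefficient in $t$ is $\int|\na u_\la|^{p-2}\na u_\la\na u_\e+\ba\int|\na u_\la|^{q-2}\na u_\la\na u_\e-\la\int u_\la^{-\de}u_\e-\int u_\la^{p^*-1}u_\e=0$, so all first-order terms cancel exactly. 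What remains is the concentration profile $\frac{t^p}{p}\|\na u_\e\|_p^p-\frac{t^{p^*}}{p^*}\|u_\e\|_{p^*}^{p^*}$ (contributing at most $\frac1nS^{n/p}$), the critical gain above, a second-order gradient remainder, a second-order singular remainder, and the $q$-bubble contribution. Collecting $\e$-orders, the gain $\sim m\,\e^{(n-p)/p}$ dominates the norm corrections $\e^{(n-p)/(p-1)}$ and $\e^{n/(p-1)}$ automatically; the upper bound $p<3$ is precisely what keeps the second-order remainder of the $p$-Laplacian expansion of strictly higher $\e$-order than the gain. Finally, since $\|\na u_\e\|_q^q$ enters multiplied by $\ba$, choosing $\ba_*$ small renders the $q$-contribution negligible uniformly for $t\in[t_0,t_1]$, and then taking $\e$ small yields $\sup_{t\ge 0}\gamma(t)<I_\la(u_\la)+\frac1nS^{n/p}$.

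The main obstacle, and the step demanding the most care, is the gradient remainder estimate. Unlike the semilinear case $p=2$, where $\|\na(u_\la+tu_\e)\|_2^2$ expands exactly and no remainder survives, the $p$-Laplacian produces genuine second- and higher-order terms (controlled by integrals such as $\int|\na u_\la|^{p-2}|\na u_\e|^2$), and one must verify that these are $o\big(\e^{(n-p)/p}\big)$. It is this comparison of $\e$-powers — rather than the convexity of the critical term, which is already guaranteed by $p>\frac{2n}{n+2}$ — that forces the restriction $p<3$. Once the gradient remainder is dispatched, the singular remainder is handled by concavity of $s\mapsto s^{1-\de}$ and the lower bound $u_\la\ge\e_\la\hat\phi>0$, and the $q$-term is harmless for $\ba<\ba_*$.
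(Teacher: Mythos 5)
Your overall route is the same as the paper's: reduce to a compact $t$-interval, expand $I_\la(u_\la+tu_\e)-I_\la(u_\la)$, cancel the first-order terms using the weak-solution identity for $u_\la$, extract the gain $-t^{p^*-1}\int_\Om u_\la u_\e^{p^*-1}\sim\e^{(n-p)/p}$, and absorb the $q$-term by taking $\ba$ small (the paper does this by coupling $\ba=\e^{\al_3}$ with $\al_3>\frac{n-p}{p}$). However, the step you single out as ``the crucial observation'' is false as stated. The pointwise inequality $(a+b)^{s}\ge a^{s}+b^{s}+s\big(a^{s-1}b+ab^{s-1}\big)$ for all $a,b\ge 0$ does \emph{not} follow from $s>2$; it fails throughout $2<s<3$ (take $a=b=1$: the left side is $2^{s}$, the right side is $2+2s$, and $2^{s}<2+2s$ for $2\le s<3$). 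Under the hypothesis $\frac{2n}{n+2}<p<3$ the exponent $s=p^*$ genuinely lands in $(2,3)$ (e.g.\ $n=10$, $p=2$ gives $p^*=2.5$), so you are invoking the inequality exactly where it breaks. The correct statement, which is what the paper imports from \cite{garcia} as \eqref{eqb79}, keeps both interaction terms but carries an admissible remainder $O(\e^{\al_2})$ with $\al_2>\frac{n-p}{p}$; the condition $p^*>2$ (equivalently $p>\frac{2n}{n+2}$) is what makes that remainder admissible, not what makes the pure convexity inequality true. With \eqref{eqb79} in place of your pointwise inequality, the rest of your $\e$-power bookkeeping survives.

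A secondary point: for the singular term, concavity of $s\mapsto s^{1-\de}$ gives $\frac{(a+b)^{1-\de}-a^{1-\de}}{1-\de}\le a^{-\de}b$, i.e.\ it bounds the relevant bracket from \emph{above}, whereas after subtracting the first-order term $\la t\int u_\la^{-\de}u_\e$ you need to bound $-\frac{\la}{1-\de}\int\big[(u_\la+tu_\e)^{1-\de}-u_\la^{1-\de}\big]+\la t\int u_\la^{-\de}u_\e$ from above, which requires a \emph{lower} bound on the bracket. Concavity alone only tells you this quantity is nonnegative, which is the unhelpful direction. You need the quantitative two-sided estimate \eqref{eqb42}, valid because $u_\la\ge m>0$ on $B_{2\mu}(0)$, with $\rho\in\big(p-1,\frac{n(p-1)}{n-p}\big)$ chosen so that $\int_\Om u_\e^\rho=O(\e^{\rho\frac{n-p}{p(p-1)}})$ has exponent strictly larger than $\frac{n-p}{p}$.
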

\begin{proof}
  By continuity of $I_\la$ and the fact $p^*>p>q$, there exists $R_0>0$ (sufficiently large) such that
	 \begin{equation}\label{eqb50}
	    I_\la(u_\la+tu_\e)< I_\la(u_\la), \ \ \mbox{for all } t\ge R_0.
	 \end{equation}
  Next, we will show that
	   \begin{align*}
	   	 \sup_{0\le t\le R_0} I_\la(u_\la+tu_\e)< I_\la(u_\la)+\frac{1}{n}S^\frac{n}{p}.
	   \end{align*}
We have the following estimates which were proved in \cite{garcia}
    \begin{equation}\label{eqb78}
      \int_{\Om} |\na(u_\la+tu_\e)|^p\le \int_{\Om}|\na u_\la|^p+ t^p\int_{\Om}|\na u_\e|^p+ pt\int_{\Om} |\na u_\la|^{p-2} \na u_\la \na u_\e +O(\e^{\al_1}),
    \end{equation}
    with $\al_1>\frac{n-p}{p}$ and
    \begin{equation}\label{eqb79}
    	\int_\Om\big(u_\la+tu_\e\big)^{p^*}dx\ge\int_{\Om}u_\la^{p^*}+t^{p^*}\int_{\Om}u_\e^{p^*}+p^*t\int_{\Om}u_\la^{p^*-1}u_\e+ p^*t^{p^*-1}\int_\Om u_\la u_\e^{p^*-1}+O(\e^{\al_2}),
    \end{equation}
    with $\al_2>\frac{n-p}{p}$. Fix $p-1<\rho<\frac{n(p-1)}{n-p}$, then there exists $L>0$ such that
     \begin{equation}\label{eqb42}
     	\la\left(\frac{(a+b)^{1-\de}}{1-\de}-\frac{a^{1-\de}}{1-\de}-\frac{b}{a^{\de}}\right)\ge -L\; b^\rho, \ \ \mbox{for all } a\ge m \ \mbox{ and }b\ge 0.
     \end{equation}
   Let $\ba=\e^{\al_3}$, with $\al_3>\frac{n-p}{p}$. Noting the fact that $u_\la$ is a weak solution of $(P_\la)$ and taking into account \eqref{eqb78},\eqref{eqb79} and \eqref{eqb42}, we deduce that
     \begin{align*}
     	I_\la(u_\la+tu_\e)-I_\la(u_\la)=& I_\la(u_\la+tu_\e)-I_\la(u_\la)\\
     	 &-t\int_{\Om}\big(|\na u_\la|^{p-2}\na u_\la\na u_\e+\ba|\na u_\la|^{q-2}\na u_\la\na u_\e-\la u_\la^{-\de}u_\e-u_\la^{p^*-1}u_\e\big)dx\\
     	 \le &\frac{t^p}{p}\int_\Om |\na u_\e|^p+O(\e^{\al_1})+O(\e^{\al_3})+ L\;t^\rho\int_{\Om}u_\e^\rho -\frac{t^{p^*}}{p^*}\int_{\Om}u_\e^{p^*}\\ &-t^{p^*-1}\int_{\Om}u_\la u_\e^{p^*-1}+O(\e^{\al_2}).
     \end{align*}
   We have the following estimates
     \begin{equation}\label{eqb45}
     	\int_{\Om}|\na u_\e|^p =\int_{\mb R^n}|\na U_1|^p+O(\e^\frac{n-p}{p-1}), \
     	\int_{\Om} u_\e^{p^*} =\int_{\mb R^n} U_1^{p^*}+O(\e^\frac{n}{p-1}) \ \mbox{ and}
     	\int_{\Om}u_\e^\rho =O(\e^{\rho\frac{n-p}{p(p-1)}}),
     \end{equation}
    with $\rho\frac{n-p}{p(p-1)}>\frac{n-p}{p}$. Thus noting the fact that $u_\la\in L^\infty_\text{loc}(\Om)$, for $0\le t\le R_0$, we obtain
    \begin{align*}
    		I_\la(u_\la+tu_\e)-I_\la(u_\la)&\le\frac{t^p}{p}\int_{\mb R^n} |\na U_1|^p+O(\e^{\al_4}) -\frac{t^{p^*}}{p^*}\int_{\mb R^n}|U_1|^{p^*} -t^{p^*-1}C\;\e^\frac{n-p}{p}
    \end{align*}
    with $\al_4>\frac{n-p}{p}$ and $C>0$. Now following the approach of \cite{garcia}, there exists $\e_1>0$ such that
     \begin{equation*}
     	\sup_{0\le t\le R_0} I_\la(u_\la+tu_\e)< I_\la(u_\la)+\frac{1}{n}S^\frac{n}{p},
     \end{equation*}
    for all $\la\in(0,\La)$, $\e\in(0,\e_1)$ and $\ba\in(0,\ba_*)$, where $\ba_*:=\e_1^\frac{n-p}{p}$. This together with \eqref{eqb50} completes the proof.\QED
     \end{proof}

 \begin{Lemma}\label{lemm11}
 	Let $\frac{2n}{n+2}<p<3$, then for each  $\la\in(0,\La)$ and $\ba\in(0,\ba_*)$, the following holds
 	  \begin{align*}
 	      \theta_{\la}^-< I_\la(u_\la)+\frac{1}{n}S^\frac{n}{p}.
 	  \end{align*}
 \end{Lemma}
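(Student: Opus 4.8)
The plan is to produce a single element of $N_\la^-$ sitting below the level $I_\la(u_\la)+\frac1n S^\frac np$ and then quote Lemma \ref{lemm8}. The cleanest candidate is a point of the ray $t\mapsto u_\la+tu_\e$ itself: if I can find $t^*\ge 0$ with $u_\la+t^*u_\e\in N_\la^-$, then, since $u_\la+t^*u_\e>0$ is admissible,
\[
\theta_\la^-\le I_\la(u_\la+t^*u_\e)\le\sup_{t\ge0}I_\la(u_\la+tu_\e)<I_\la(u_\la)+\frac1n S^\frac np,
\]
where the strict inequality is exactly Lemma \ref{lemm8} (its hypotheses $\frac{2n}{n+2}<p<3$, $\la\in(0,\La)$ and $\ba\in(0,\ba_*)$ coincide with the ones assumed here). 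Thus the whole statement reduces to showing that the ray issuing from $u_\la$ in the direction $u_\e$ crosses $N_\la^-$.

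To locate $t^*$ I would use the fibering projection onto $N_\la^-$. Because $\la<\la_*$ (recall $\La=\min\{\la_*,\tilde{\la}_*\}$), the analysis of $M_u$ carried out before Lemma \ref{lemm1} applies to every nonzero direction, so for each $t\ge0$ the (strictly positive, hence nonzero) function $u_\la+tu_\e$ admits a unique scaling $\tau(t)>0$ with $\tau(t)(u_\la+tu_\e)\in N_\la^-$, namely the larger of the two admissible scalings furnished there. An implicit function theorem argument identical to Lemma \ref{lemm3}(ii), relying on $J^{\prime\prime}_{\tau(t)(u_\la+tu_\e)}(1)<0$ on $N_\la^-$, shows that $t\mapsto\tau(t)$ is continuous on $[0,\infty)$. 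It then suffices to check $\tau(0)>1$ and $\tau(t)<1$ for large $t$: the intermediate value theorem will supply $t^*$ with $\tau(t^*)=1$, that is $u_\la+t^*u_\e\in N_\la^-$.

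The two endpoint estimates are read off from the fibering maps. At $t=0$ the direction is $u_\la\in N_\la^+$, so $s=1$ is precisely the local minimum of $J_{u_\la}$ and the $N_\la^-$ scaling is the strictly larger local maximum, whence $\tau(0)>1$. For the other end I would exploit the homogeneity $\tau(\mu w)=\tau(w)/\mu$ of the projection (immediate from uniqueness of the $N_\la^-$ scaling) together with $\frac1t(u_\la+tu_\e)=\frac1t u_\la+u_\e\ra u_\e$ in $W^{1,p}_0(\Om)$: continuity of the projection gives $\tau\big(\tfrac1t(u_\la+tu_\e)\big)\ra\tau(u_\e)\in(0,\infty)$, hence $\tau(t)=\frac1t\,\tau\big(\tfrac1t(u_\la+tu_\e)\big)\ra 0$, so $\tau(t)<1$ for all large $t$. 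I expect this last limit to be the only delicate point, since it requires the continuity of the $N_\la^-$-projection to survive the degeneration $t\ra\infty$ and a uniform control of the lower order terms $\ba\|\na(u_\la+tu_\e)\|_q^q$ and $\la\int_\Om(u_\la+tu_\e)^{1-\de}$; the homogeneity gaps $q<p<p^*$ and $1-\de<p$ are exactly what make the balance go through. With $t^*$ in hand, the displayed chain of inequalities above closes the argument.
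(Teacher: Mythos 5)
Your argument is correct and is essentially the one the paper relies on: the paper's proof of this lemma simply defers to \cite[Lemma 8]{hirano}, whose content is precisely your scheme of showing that the ray $t\mapsto u_\la+tu_\e$ must cross $N_\la^-$ and then invoking the level estimate of Lemma \ref{lemm8}. Your implementation via the fibering projection $\tau$ (continuity by the implicit function theorem as in Lemma \ref{lemm3}(ii), $\tau(0)>1$ since $u_\la\in N_\la^+$, $\tau(t)\to0$ by homogeneity, then the intermediate value theorem) is sound.
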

\begin{proof}
 The proof follows exactly on the same lines of \cite[Lemma 8]{hirano}.\QED
\end{proof}
 \begin{Lemma}\label{lemm7}
 	There exists a constant $D_0>0$ such that for all $u\in N_\la$,
 	  \begin{align*}
 	  	I_\la(u)\ge -D_0 \la^\frac{p}{p-1+\de}.
 	  \end{align*}
 \end{Lemma}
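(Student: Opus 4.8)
The plan is to establish a uniform lower bound for $I_\la$ on the whole Nehari manifold $N_\la$ that scales like $-\la^{p/(p-1+\de)}$. The key observation is that every $u\in N_\la$ satisfies $J_u'(1)=0$, which couples the gradient terms, the singular term, and the $L^r$ term. I would start from the expression for $I_\la(u)$ obtained by eliminating the $\|u\|_r^r$ term via the Nehari constraint \eqref{eqb21}. Writing $I_\la(u)=I_\la(u)-\tfrac{1}{r}J_u'(1)$ gives
\begin{align*}
I_\la(u)=\Big(\frac1p-\frac1r\Big)\|\na u\|_p^p+\ba\Big(\frac1q-\frac1r\Big)\|\na u\|_q^q-\la\Big(\frac{1}{1-\de}-\frac1r\Big)\int_\Om |u|^{1-\de}\,dx.
\end{align*}
Since $\ba,q,r$ make the second term nonnegative, I can drop it to get the cleaner bound
\begin{align*}
I_\la(u)\ge \Big(\frac1p-\frac1r\Big)\|u\|^p-\la\Big(\frac{1}{1-\de}-\frac1r\Big)\int_\Om |u|^{1-\de}\,dx.
\end{align*}

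Next I would control the singular integral by H\"older and Sobolev: $\int_\Om|u|^{1-\de}\le |\Om|^{1-\frac{1-\de}{p^*}}S^{-\frac{1-\de}{p}}\|u\|^{1-\de}$, exactly the estimate already used in the proof that $N_\la^0=\emptyset$. This reduces the problem to minimizing over $t:=\|u\|\ge 0$ the scalar function
\begin{align*}
\phi(t)=A\,t^p-\la B\,t^{1-\de},
\end{align*}
where $A=\big(\tfrac1p-\tfrac1r\big)$ and $B=\big(\tfrac{1}{1-\de}-\tfrac1r\big)|\Om|^{1-\frac{1-\de}{p^*}}S^{-\frac{1-\de}{p}}$ are positive constants independent of $\la$. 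A direct one-variable calculus argument (setting $\phi'(t)=0$) locates the unique minimizer at $t_0=\big(\tfrac{(1-\de)\la B}{pA}\big)^{1/(p-1+\de)}$, and substituting back yields $\phi(t_0)=-D_0\,\la^{p/(p-1+\de)}$ for an explicit constant $D_0>0$ depending only on $p,q,r,\de,|\Om|,S$ but not on $\la$ or $\ba$. Since $I_\la(u)\ge \phi(\|u\|)\ge \min_{t\ge0}\phi(t)=-D_0\la^{p/(p-1+\de)}$, the lemma follows.

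I do not expect a genuine obstacle here: the argument is elementary once the gradient-$q$ term is discarded and the singular term is bounded by a single power of $\|u\|$. The only point requiring mild care is verifying that the exponent $1-\de<p$ guarantees $\phi$ is coercive and attains its minimum (so the infimum over $N_\la$ is bounded below rather than $-\infty$), and checking that the resulting exponent on $\la$ is precisely $p/(p-1+\de)$ — this comes out automatically from the scaling $\min_t(At^p-\la Bt^{1-\de})$. The uniformity in $\ba$ is immediate because the discarded $q$-term carried the only $\ba$-dependence, so $D_0$ is genuinely $\ba$-independent, which is exactly what later energy-level comparisons will require.
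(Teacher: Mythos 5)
Your proposal is correct and follows essentially the same route as the paper: eliminate the $\|u\|_r^r$ term via the Nehari constraint, discard the nonnegative $\ba$-term, bound the singular integral by H\"older and Sobolev, and then optimize in $\|u\|$. The paper carries out that last optimization via a Young-type inequality with weights chosen so the $\|u\|^p$ contributions cancel, which is exactly equivalent to your one-variable minimization of $At^p-\la Bt^{1-\de}$ and produces the same exponent $\la^{p/(p-1+\de)}$ and a $\ba$-independent constant $D_0$.
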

\begin{proof} Let $u\in N_\la$, then since $J^\prime_u(1)=0$, we have
	 \begin{equation}\label{eqb46}
	\begin{aligned}
	I_{\la}(u) &= \left(\frac{1}{p}-\frac{1}{p^*}\right) \|\na u\|_p^p +\ba \left(\frac{1}{q}- \frac{1}{p^*}\right) \|\na u\|_q^q- \la\left(\frac{1}{1-\de}-\frac{1}{p^*}\right)\int_\Om |u|^{1-\de} dx  \\
	& \geq \left(\frac{1}{p}-\frac{1}{p^*}\right)\|u\|^{p}-\la   \left(\frac{1}{1-\de}-\frac{1}{p^*}\right)\int_\Om |u|^{1-\de}dx.
	\end{aligned}
	\end{equation}
	\noi Using H\"older inequality, Sobolev embeddings and Young inequality, we deduce that
	\begin{equation}\label{eqb47}
	\begin{aligned}
	\la \int_\Om |u|^{1-\de}dx &\leq \la S^{-\frac{1-\de}{p}} |\Om|^{1-\frac{1-\de}{p^*}} \|u\|^{1-\de}\\
	&= \left(\frac{p}{1-\de}\left(\frac{1}{p}-\frac{1}{p^*}\right) \left(\frac{1}{1-\de}-\frac{1}{p^*}\right)^{-1}\right)^\frac{1-\de}{p}\|u\|^{1-\de}\\
	&\qquad \quad \la \left(\frac{p}{1-\de}\left(\frac{1}{p}-\frac{1}{p^*}\right) \left(\frac{1}{1-\de}-\frac{1}{p^*}\right)^{-1}\right)^{-\frac{1-\de}{p}} |\Om|^{1-\frac{1-\de}{p^*}} S^{-\frac{1-\de}{p}} \\
	& \leq \left(\frac{1}{p}-\frac{1}{p^*}\right) \left(\frac{1}{1-\de}- \frac{1}{p^*}\right)^{-1}\|u\|^{p}+ A \la ^{\frac{p}{p-1+\de}},
	\end{aligned}
	\end{equation}
	\noi where $A=  \left(\frac{p-1+\de}{p}\right)\left(\frac{p^*-1+\de}{p^*-p}\right)^{\frac{p(1-\de)}{p-1+\de}}  S^{-\frac{1-\de}{p-1+\de}} |\Om|^{\frac{p(p^*-1+\de)}{(p-1+\de)p^*}}.$
	Therefore, result follows from \eqref{eqb46} and \eqref{eqb47} with $D_0=   \left(\frac{1}{1-\de}-\frac{1}{p^*}\right)A.$ \QED
\end{proof}

\begin{Lemma}\label{lemm9}
	Let $p\in(1, \frac{2n}{n+2}\big]\cup[3,n)$. Then there exist  $\La_0,\;\ba_{0}>0$, and $u_0\in W^{1,p}_0(\Om)\setminus\{0\}$ such that for all $\la\in(0,\La_{0})$ and $\ba\in(0,\ba_{0})$
  $$ \ds\sup_{t\geq 0} I_\la(tu_0) <\frac{1}{n}S^\frac{n}{p}-D_0\la^\frac{p}{p-1+\de}.$$
	In particular $\theta_{\la}^{-} <\frac{1}{n}S^\frac{n}{p}-D_0 \la^\frac{p}{p-1+\de}\le\frac{1}{n}S^\frac{n}{p}+I_\la(u_\la).$
\end{Lemma}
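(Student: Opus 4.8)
Since a fixed test function cannot work (for any fixed $u_0$ one has $\sup_{t\ge0}\big[\frac{t^p}{p}\|\na u_0\|_p^p-\frac{t^{p^*}}{p^*}\int_\Om u_0^{p^*}\big]\ge\frac1n S^{n/p}$, while the singular gain vanishes as $\la\to0$), the plan is to use a rescaled and truncated Aubin--Talenti instanton whose concentration parameter is coupled to $\la$, and to manufacture the gap below $\frac1n S^{n/p}$ out of the singular term after rendering the $\ba$-contribution negligible. Concretely I would take $u_0=u_\e=U_\e\zeta$ with $\e>0$ small (fixed in terms of $\la$ only at the end) and analyse the fibering map $t\mapsto I_\la(tu_\e)$. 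By the fibering analysis of Section 3, for $\la<\la_*$ this map satisfies $I_\la(tu_\e)\to0^-$ as $t\to0^+$, tends to $-\infty$ as $t\to\infty$, and has a unique local maximum at some $\ov t$ with $\ov t u_\e\in N_\la^-$; consequently $\theta_\la^-\le I_\la(\ov tu_\e)\le\sup_{t\ge0}I_\la(tu_\e)$, which is exactly the reduction needed for the ``in particular'' clause. Since $\|\na u_\e\|_p^p$ and $\int_\Om u_\e^{p^*}$ stay near their finite positive whole-space values while the $\ba$- and $\la$-terms are small, the maximiser $\ov t$ remains in a fixed compact subinterval of $(0,\infty)$ uniformly for small $\la,\ba$, and this uniform lower bound on $\ov t$ is what makes the singular term usable.

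I would then estimate the supremum termwise. Splitting $I_\la(tu_\e)=\Psi_\ba(t)-\frac{\la t^{1-\de}}{1-\de}\int_\Om u_\e^{1-\de}$, with $\Psi_\ba(t)=\frac{t^p}{p}\|\na u_\e\|_p^p+\ba\frac{t^q}{q}\|\na u_\e\|_q^q-\frac{t^{p^*}}{p^*}\int_\Om u_\e^{p^*}$, the maximum of the pure critical part equals $\frac1n\big(\|\na u_\e\|_p^p/\|u_\e\|_{p^*}^{p}\big)^{n/p}$, and feeding the expansions \eqref{eqb45} into this Sobolev quotient gives $\frac1n S^{n/p}+O(\e^{(n-p)/(p-1)})$. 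On the compact range of $t$ the $q$-term contributes at most $C\ba\|\na u_\e\|_q^q$, made negligible by choosing $\ba_0$ small, and since $\ov t$ is bounded away from $0$ the singular term removes a genuine amount $\ge c\,\la\int_\Om u_\e^{1-\de}$, where a scaling computation yields $\int_\Om u_\e^{1-\de}\sim\e^{\gamma}$ for an explicit $\gamma\in\big(0,\tfrac{n-p}{p-1}\big)$. Altogether
\[
\sup_{t\ge0}I_\la(tu_\e)\le\frac1n S^{n/p}+C_1\e^{\frac{n-p}{p-1}}+C_2\ba\|\na u_\e\|_q^q-c\,\la\,\e^{\gamma}.
\]

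The lemma then reduces to choosing $\e=\e(\la)\to0$ (and $\ba_0$) so that the negative term dominates the concentration error and still leaves the prescribed margin, i.e.
\[
C_1\e^{\frac{n-p}{p-1}}+D_0\la^{\frac{p}{p-1+\de}}<c\,\la\,\e^{\gamma}.
\]
Optimising $\e$ in $c\la\e^{\gamma}-C_1\e^{(n-p)/(p-1)}$ (legitimate because $\gamma<\frac{n-p}{p-1}$) produces a gain of order $\la^{(n-p)/((n-p)-(p-1)\gamma)}$, and the argument hinges on this order being no larger than $\la^{p/(p-1+\de)}$. This exponent comparison is the crux and the only place the hypothesis $p\in(1,\frac{2n}{n+2}]\cup[3,n)$ enters: the balance is delicate (borderline for the generic scaling exponents), and the role of the hypothesis is to secure the sharp form of the exponents in $\int_\Om u_\e^{1-\de}$ and of the error in \eqref{eqb45} that keeps the admissible window for $\e(\la)$ nonempty --- the complementary range $(\frac{2n}{n+2},3)$ being exactly the one forced instead onto the shifted instanton $u_\la+tu_\e$ of Lemma \ref{lemm8}. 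Once such an $\e(\la)$ and a small $\ba_0$ are fixed, the displayed strict inequality holds for all $\la\in(0,\La_0)$ and $\ba\in(0,\ba_0)$.

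Finally, for the concluding comparison I would invoke Lemma \ref{lemm7} with $u=u_\la\in N_\la$: it gives $I_\la(u_\la)=\theta_\la^+\ge-D_0\la^{p/(p-1+\de)}$, hence $\frac1n S^{n/p}-D_0\la^{p/(p-1+\de)}\le\frac1n S^{n/p}+I_\la(u_\la)$; combining this with the inequality $\theta_\la^-\le\sup_{t\ge0}I_\la(tu_0)$ established in the first paragraph closes the chain asserted in the lemma. The principal obstacle is thus not any single estimate but the simultaneous calibration of $\e$ against $\la$ (and of $\ba_0$) in the borderline exponent inequality, which is precisely what pins down the admissible range of $p$.
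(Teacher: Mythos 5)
Your overall strategy coincides with the paper's: test with $tu_\e$ rather than with the translated instanton, isolate the pure critical part whose supremum is $\frac1n S^{n/p}+O(\e^{(n-p)/(p-1)})$, render the $q$-term harmless by taking $\ba$ small (the paper takes $\ba=\e^{\al_9}$ with $\al_9>\frac{n-p}{p-1}$), extract the gap from the singular term $\la\int_\Om u_\e^{1-\de}$ on a $t$-range bounded away from $0$ (the paper splits $t\le t_0$ and $t\ge t_0$ instead of localizing the maximiser), couple $\e$ to $\la$, and close the ``in particular'' clause with Lemma \ref{lemm7}. Your observation that a single fixed $u_0$ cannot satisfy the displayed inequality is correct and consistent with what the paper actually does, namely $u_0=u_{\e(\la)}$.

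The gap sits exactly at what you call the crux. First, the hypothesis $p\in(1,\frac{2n}{n+2}]\cup[3,n)$ is not used anywhere in the paper's proof of this lemma: the dichotomy is on $\de$ versus $\frac{2n-np-p}{n-p}$, equivalently on the sign of $n-\frac{n-p}{p-1}(1-\de)$, which decides whether $\int_{B_\mu(0)}U_\e^{1-\de}\gtrsim\e^{\frac{(n-p)(1-\de)}{p(p-1)}}$ or $\gtrsim\e^{\,n-\frac{n-p}{p}(1-\de)}$. The restriction on $p$ merely marks the complement of the range in which the expansions \eqref{eqb78}--\eqref{eqb79} for $u_\la+tu_\e$, hence Lemma \ref{lemm8}, are available; attributing the exponent inequality to it leaves that inequality unproved. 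Second, the comparison is genuinely not settled by ``no larger than'': in the generic case $\ga=\frac{(n-p)(1-\de)}{p(p-1)}$ one has $(n-p)-(p-1)\ga=(n-p)\frac{p-1+\de}{p}$, so your optimised gain has order exactly $\la^{\frac{p}{p-1+\de}}$, i.e.\ the gain, the required margin $D_0\la^{\frac{p}{p-1+\de}}$ and the concentration error $C_1\e^{\frac{n-p}{p-1}}$ all collapse to the same power of $\la$ under the only admissible scaling $\e\sim\la^{\frac{p(p-1)}{(n-p)(p-1+\de)}}$. One must then verify a comparison of constants, $\sup_{K>0}\big(c\,K^{\ga}-C_1K^{\frac{n-p}{p-1}}\big)>D_0$, which your sketch does not address (and which the paper's own Case (1) also glosses over; only in its Case (2), where $\de\le\frac{2n-np-p}{n-p}$, does the gain have strictly lower order than $\la^{\frac{p}{p-1+\de}}$ and the conclusion follow from exponents alone). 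To complete the argument you must either carry out that constant-level comparison or reproduce the paper's case analysis in $\de$; the restriction on $p$ will not rescue you.
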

\begin{proof} Let $\ga_0>0$ be such that for all $\la\in(0,\ga_0)$, $\frac{1}{n}S^\frac{n}{p}-D_0 \la^\frac{p}{p-1+\de}>0$ holds. Using H\"older inequality, we deduce that
	\begin{align*}
	I_\la(tu_{\e})&\leq \frac{t^{p}}{p}\|\na u_{\e}\|_p^p+ \ba \frac{t^{q}}{q}\|\na u_{\e}\|_q^q \\ &\leq
	\frac{t^{p}}{p}\|\na u_{\e}\|_p^p+C\ba \frac{t^{q}}{q}\|\na u_{\e}\|_p^p  \leq
	C(t^{p}+t^{q}).
	\end{align*}
	Therefore, there exists $t_0\ge 0$ such that \begin{equation*}
	\sup_{0\leq t\leq t_0} I_\la(tu_{\e}) < \frac{1}{n}S^\frac{n}{p}-D_0 \la^\frac{p}{p-1+\de}.\end{equation*}
  Let $h(t)= \frac{t^{p}}{p}\|\na u_{\e}\|_p^p +\ba \frac{t^{q}}{q}\|\na u_{\e}\|_q^q  -\frac{t^{p^*}}{p^*}\ds\int_{\Om}|u_{\e}|^{p^*}.$ We note that $h(0)=0$, $h(t)>0$ for $t$ small enough, $h(t)<0$ for $t$ large enough,
   and there exists $t_\e >0$ such that $\ds\sup_{t\ge 0}h(t)=h(t_\e),$ therefore
	$$0=h^\prime(t_\e)=t_\e^{p-1}\|\na u_{\e}\|_p^p +\ba t_\e^{q-1}\|\na u_{\e}\|_q^q -t_\e^{p^*-1}\ds\int_{\Om}|u_{\e}|^{p^*}$$
	which gives us
	\begin{align*}
	t_\e ^{p^*-q}&=\frac{1}{\|u_{\e}\|_{p^*}^{p^*}} \big(t_\e^{p-q}\|\na u_{\e}\|_p^p+ \ba\|\na u_{\e}\|_q^q\big)
	< C(1+t_\e^{p-q}).
	\end{align*}
	Since $p^*> p$, there exists $t_1> 0$ such that $t_\e <t_1$ for all  $\e >0$.
  Thus,  we  get
	\begin{equation}\label{eqb51}
	 \begin{aligned}
	    \ds\sup_{t\ge t_0} I_\la(tu_{\e})&\le \ds\sup_{t>0}h(t)-\frac{t_0^{1-\de}}{1-\de}\la\ds\int_{B_\mu(0)}|U_\e|^{1-\de}\\
	    &\leq \ds\sup_{t\ge 0}\left(\frac{t^{p}}{p}\|\na u_{\e}\|_p^p- \frac{t^{p^*}}{p^*}\|u_{\e}\|_{p^*}^{p^*}\right)
	    +\ba \frac{t_1^{q}}{q}\|\na u_{\e}\|_q^q  -\frac{t_0^{1-\de}}{1-\de}\la\ds\int_{B_\mu(0)}|U_\e|^{1-\de}.
	  \end{aligned}
	\end{equation}
  Set $g(t)=\frac{t^{p}}{p}\|u_{\e}\|^p- \frac{t^{p^*}}{p^*}\|u_{\e}\|_{p^*}^{p^*}$. A simple computation shows that $g$ attains maximum at $\tilde{t}=\left(\frac{\|u_{\e}\|^p}{\|u_{\e}\|_{p^*}^{p^*}}\right)^\frac{1}{p^*-p}$
	and \begin{align*}
	      \ds\sup_{t\ge 0}g(t)= g(\tilde{t})= \frac{1}{n}	\left(\frac{\|u_{\e}\|^{p}}{\|u_{\e}\|_{p^*}^{p^*}}\right)^\frac{n}{p},
	    \end{align*}
which on using \eqref{eqb45} reduces to
  \begin{equation*}
  	 \ds\sup_{t\ge 0}g(t)\le\frac{1}{n}S^\frac{n}{p}+C_3\;\e^\frac{n-p}{p-1}.
  \end{equation*}
  Let $\ba=\e^{\al_9}$, with $\al_9>\frac{n-p}{p-1}$. For $n-\frac{n-p}{p-1}(1-\de)>0$, we have
    \begin{align*}
       \int_{B_\mu(0)}|U_\e|^{1-\de}dx&=\e^{n-(1-\de)\frac{n-p}{p}}\int_{B_{\mu/\e}(0)} \frac{1}{\big(1+|y|^\frac{p}{p-1}\big)^{\frac{n-p}{p}(1-\de)}}dy \\
    	&\ge \e^{n-(1-\de)\frac{n-p}{p}} \int_{1}^{\mu/\e}\frac{r^{n-1}}{(1+r^\frac{p}{p-1})^{\frac{n-p}{p}(1-\de)}}dr\ge C\;\e^{\frac{n-p}{p(p-1)}(1-\de)}.
    \end{align*}
   Furthermore, for $n-\frac{n-p}{p-1}(1-\de)\le 0$, following the approach of \cite[Lemma 1.46]{willem}, we have
    \begin{align*}
    	\int_{B_\mu(0)}|U_\e|^{1-\de}dx \ge C\begin{cases}
    	         \e^{n-\frac{n-p}{p}(1-\de)}, \ \mbox{ if } n-\frac{n-p}{p-1}(1-\de)< 0,\\
    	         \e^\frac{n}{p} |\ln\e|, \ \mbox{ if } n-\frac{n-p}{p-1}(1-\de)= 0.
    	\end{cases}
    \end{align*}
   Now collecting all the informations done so far in \eqref{eqb51}, we deduce that
    \begin{equation}\label{eqb52}
    \begin{aligned}
    	 \ds\sup_{t\ge t_0} I_\la(tu_{\e})&\le \frac{1}{n}S^\frac{n}{p}+C_3\;\e^\frac{n-p}{p-1} -C_4\la\begin{cases}
    	    \e^{\frac{n-p}{p(p-1)}(1-\de)}, \mbox{ if }\de>\frac{2n-np-p}{n-p}, \\
    	    \e^{n-\frac{n-p}{p}(1-\de)}, \mbox{ if }\de<\frac{2n-np-p}{n-p}, \\
    	    \e^\frac{n}{p} |\ln\e|, \ \mbox{ if }\de=\frac{2n-np-p}{n-p}.
    	 \end{cases}
    	 \end{aligned}
    \end{equation}
    We consider the following cases:\\
   \textbf{Case(1)}: If $\frac{2n-np-p}{n-p}<\de<1$.\\
   In this case since $\frac{n-p}{p(p-1)}(1-\de)<\frac{n-p}{p-1}$, there exists $\hat\e>0$ and $\hat\ga>0$ such that for all $\e\in(0, \hat\e)$ and $\la\in(0, \hat\ga)$, we have
      \begin{align*}
      	 \sup_{t\ge t_0} I_\la(tu_\e)< \frac{1}{n}S^\frac{n}{p}-D_0\la^\frac{p}{p-1+\de},
      \end{align*}
   for all $\ba\in(0, \hat\ba)$, where $\hat\ba:=\hat\e^\frac{n-p}{p-1}$.\\ 
  \textbf{Case(2)}: If $0<\de\le\frac{2n-np-p}{n-p}$.\\
   Let $\e =\big(\la^\frac{p}{p-1+\de}\big)^\frac{p-1}{n-p}\le\mu$. Then,  \eqref{eqb52} reduces to
       \begin{equation}\label{eqb53}
        \begin{aligned}
          \ds\sup_{t\ge t_0} I_\la(tu_{\e})&\le \frac{1}{n}S^\frac{n}{p}+C_3\;\la^\frac{p}{p-1+\de} -C_4\la\begin{cases}
           \la^{\frac{p}{p-1+\de} \frac{p-1}{n-p}\big(n-\frac{n-p}{p}(1-\de)\big)}, \mbox{ if }\de<\frac{2n-np-p}{n-p}, \\
           \la^\frac{1-\de}{p-1+\de} |\ln\la^\frac{p(1-\de)}{n(p-1+\de)}|, \ \mbox{ if } \de=\frac{2n-np-p}{n-p}.
        \end{cases}
       \end{aligned}
       \end{equation}
   Subcase (2)(a): If $\de<\frac{2n-np-p}{n-p}$. \\
      In this case we have $n<\frac{n-p}{p-1}(1-\de)$, which implies that
        \begin{align*}
        	1+\frac{p}{p-1+\de} \frac{p-1}{n-p}\Big(n-\frac{n-p}{p}(1-\de)\Big)<\frac{p}{p-1+\de}.
        \end{align*}
     Therefore there exists $\ga_2>0$ such that for all $\la\in(0, \ga_2)$, we have
      \begin{align*}
      	 C_3\;\la^\frac{p}{p-1+\de}- C_4\la\cdot\la^{\frac{p}{p-1+\de} \frac{p-1}{n-p}\big(n-\frac{n-p}{p}(1-\de)\big)} < -D_0 \la^\frac{p}{p-1+\de}.
      \end{align*}
    Subcase(2)(b): If $\de=\frac{2n-np-p}{n-p}$. \\
     Since $|\ln\la^\frac{p(1-\de)}{n(p-1+\de)}|\ra\infty$ as $\la\ra 0$, there exists $\ga_3>0$ such that
       \begin{align*}
       	  C_3\;\la^\frac{p}{p-1+\de}- C_4\la\cdot\la^\frac{1-\de}{p-1+\de} |\ln\la^\frac{p(1-\de)}{n(p-1+\de)}|< -D_0 \la^\frac{p}{p-1+\de},
       \end{align*}
     for all $\la\in(0, \ga_3)$. Let $\La_{0}=\min\{ \mu^{\frac{n-p}{p-1}}, \ga_0,\la_*,\ga_2, \ga_3,\hat\ga\}>0$ and $\ba_{1} =\La_{0}^\frac{p}{p-1+\de}$. Then from \eqref{eqb53}, for all $\la\in(0,\La_{0})$ and $\ba\in(0,\ba_{1})$, we have
        \begin{align*}
        	\sup_{t\ge t_0}I_\la(tu_\e)<\frac{1}{n}S^\frac{n}{p}-D_0\la^\frac{p}{p-1+\de},
        \end{align*}
     for sufficiently small $\e>0$. Thus, for all $\la\in(0,\La_{0})$ and $\ba\in(0,\ba_{0})$, we get
      \begin{align*}
     \sup_{t\ge 0}I_\la(tu_\e)<\frac{1}{n}S^\frac{n}{p}-D_0\la^\frac{p}{p-1+\de},
     \end{align*}
     where $\ba_0=\min\{\ba_1,\hat\ba\}>0$, which proves the first part of the lemma. For the last part we observe that $u_\e\in W^{1,p}_0(\Om)\setminus\{0\}$ and since $0<\la<\la_*$, there exists $\bar{t}>0$ such that $\bar{t}u_\e\in N_\la^-$.
        Hence, \begin{align*}
             \theta_\la^- \leq I_\la(\bar{t}u_\e)\leq \ds\sup_{t\ge 0}  I_\la(tu_\e)<\frac{1}{n}S^\frac{n}{p}-D_0\la^\frac{p}{p-1+\de},
        \end{align*}
        this together with lemma \ref{lemm7} completes the proof for $u_0=u_\e$.  \QED
\end{proof}
 \begin{Proposition}\label{prop3}
 	There exists $v_\la\in N_\la^-$ such that $I_\la(v_\la)=\theta_{\la}^-$ in each of the following cases:  \begin{enumerate}
 	\item[(i)] for all $\la\in(0,\La)$ and $\ba\in(0,\ba_*)$, when $\frac{2n}{n+2}<p<3$,
 	\item[(ii)] for all $\la\in(0,\La_{0})$ and $\ba\in(0, \ba_{0})$, when $p\in(1,\frac{2n}{n+2}\big]\cup[3,n)$. \end{enumerate}
 \end{Proposition}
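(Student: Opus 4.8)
The plan is to produce $v_\la$ by direct minimization of $I_\la$ over $N_\la^-$, following the scheme of Proposition~\ref{prop2}, with the loss of compactness at the critical exponent now controlled by the energy gap estimates of Lemma~\ref{lemm11} (in case (i)) and Lemma~\ref{lemm9} (in case (ii)). Both of these furnish, on their respective parameter ranges, the single inequality
\begin{equation*}
\theta_\la^- < I_\la(u_\la)+\tfrac{1}{n}S^\frac{n}{p},
\end{equation*}
where $u_\la\in N_\la^+$ is the minimizer obtained in Proposition~\ref{prop2}. Since this is the only place where the hypotheses of (i) and (ii) enter, the two cases will be handled by one and the same argument, differing merely in the admissible set of $(\la,\ba)$.

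First I would take a minimizing sequence $\{v_k\}\subset N_\la^-$ with $I_\la(v_k)\to\theta_\la^-$. By Lemma~\ref{lemm1}(ii) it is bounded in $W^{1,p}_0(\Om)$, so along a subsequence $v_k\rightharpoonup v_\la$, with $v_k\to v_\la$ a.e.\ and in $L^m(\Om)$ for all $m<p^*$; in particular $\int_\Om|v_k|^{1-\de}\to\int_\Om|v_\la|^{1-\de}$. Writing $w_k=v_k-v_\la$ and $\|\na w_k\|_p^p\to b_1^p$, $\|\na w_k\|_q^q\to b_2^q$, $\int_\Om|w_k|^{p^*}\to d^{p^*}$, the Brezis--Lieb lemma splits $\|\na v_k\|_p^p$, $\|\na v_k\|_q^q$ and $\|v_k\|_{p^*}^{p^*}$, and yields $Sd^p\le b_1^p$ together with $\theta_\la^-=I_\la(v_\la)+\frac{b_1^p}{p}+\ba\frac{b_2^q}{q}-\frac{d^{p^*}}{p^*}$ and $J'_{v_\la}(1)+b_1^p+\ba b_2^q-d^{p^*}=0$. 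To rule out $v_\la=0$ I would observe that then the Nehari relation gives $b_1^p\le b_1^p+\ba b_2^q=d^{p^*}\le S^{-p^*/p}b_1^{p^*}$, whence $b_1^p\ge S^\frac{n}{p}$ (the lower bound of Lemma~\ref{lemm1}(ii) ensuring $b_1>0$), so that the energy relation forces $\theta_\la^-\ge\frac1n b_1^p\ge\frac1n S^\frac{n}{p}$; since $I_\la(u_\la)=\theta_\la^+<0$ by Lemma~\ref{lemm2}, this contradicts the gap estimate. Hence $v_\la\neq0$, and because $\la<\la_*$ its fibering map has critical points $0<\ul s<\ov s$ with $\ul s\,v_\la\in N_\la^+$ and $\ov s\,v_\la\in N_\la^-$.

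The hard part is the strong convergence $v_k\to v_\la$, i.e.\ $b_1=0$. Assuming $b_1>0$, I would run the shifted fibering map $f(t)=J_{v_\la}(t)+\frac{b_1^p}{p}t^p+\ba\frac{b_2^q}{q}t^q-\frac{d^{p^*}}{p^*}t^{p^*}$, which satisfies $f(1)=\theta_\la^-$ and $f'(1)=0$, through a case distinction on the location of $1$ relative to $\ul s,\ov s$ and on the sign of $\frac{b_1^p}{p}-\frac{d^{p^*}}{p^*}$, exactly parallel to Proposition~\ref{prop2}. The alternatives with $\ov s\neq1$ are excluded by monotonicity of $f$ between consecutive critical points together with the definition of $\theta_\la^-$; the alternative in which the concentration is genuine injects at least $\frac1n S^\frac{n}{p}$ into the limiting energy, forcing $\theta_\la^-\ge I_\la(u_\la)+\frac1n S^\frac{n}{p}$ and contradicting the gap estimate. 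In the only surviving case $\ov s=1$, so $v_\la\in N_\la^-$ and $I_\la(v_\la)\ge\theta_\la^-$; since $J'_{v_\la}(1)=0$ gives $d^{p^*}=b_1^p+\ba b_2^q$, the energy relation becomes $\theta_\la^-=I_\la(v_\la)+\frac1n b_1^p+\ba\big(\frac1q-\frac1{p^*}\big)b_2^q$, and comparing with $I_\la(v_\la)\ge\theta_\la^-$ forces $\frac1n b_1^p+\ba\big(\frac1q-\frac1{p^*}\big)b_2^q\le0$, hence $b_1=b_2=0$ and $d=0$. This matching of the concentration threshold $\frac1n S^\frac{n}{p}$ with the sharp level $I_\la(u_\la)+\frac1n S^\frac{n}{p}$ is where the whole argument stands or falls, and is the step I expect to require the most care.

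Finally, strong convergence yields $v_\la\in N_\la^-$ (the strict inequality $J''_{v_\la}(1)<0$ persists because $\la<\la_*$) with $I_\la(v_\la)=\theta_\la^-$, and Theorem~\ref{thmA} then shows that $v_\la$ is a weak solution of $(P_\la)$. Invoking the gap estimate of Lemma~\ref{lemm11} proves case (i), and that of Lemma~\ref{lemm9} proves case (ii).
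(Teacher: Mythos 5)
Your proposal reproduces the paper's argument for Proposition~\ref{prop3} essentially step for step: minimizing sequence, Brezis--Lieb splitting, the gap estimates of Lemma~\ref{lemm11} (case (i)) and Lemma~\ref{lemm9} (case (ii)) to exclude both $v_\la=0$ and genuine concentration, and a fibering-map analysis of the shifted functional $f$ to force strong convergence, with your endgame at $\ov s=1$ (using $I_\la(v_\la)\ge\theta_\la^-$ together with $d^{p^*}=b_1^p+\ba b_2^q$ to squeeze out $b_1=b_2=0$) a clean equivalent of the paper's cases (b)--(c). The one thin spot is the claim that monotonicity of $f$ alone excludes $\ov s>1$: in the subcase $d=0$, $b_1>0$ you additionally need the limiting inequality coming from $J''_{v_k}(1)<0$ (so that $1$ is the local maximum rather than the local minimum of the shifted fibering map), which is precisely the second-derivative information the paper invokes in its case (c).
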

\begin{proof}
	Let $\{v_k\}\subset N_\la^-$ be such that $I_\la(v_k)\ra\theta_\la^-$ as $k\ra\infty$. By Lemma \ref{lemm1}(ii), we may assume there exists $v_\la\in W^{1,p}_0$ such that $v_k\rightharpoonup v_\la$ weakly in $W^{1,p}_0(\Om)$ and $v_k(x)\ra v_\la(x)$ a.e. in $\Om$ (upto subsequence). Set $z_k=v_k-v_\la$, then by Brezis-Lieb lemma, we have
	\begin{equation}\label{eqb54}
	\begin{aligned}
	    &\theta_{\la}^-+o_k(1)=I_\la(v_\la)+\frac{1}{p}\int_{\Om}|\na z_k|^p+ \ba\frac{1}{q}\int_{\Om}|\na z_k|^q-\frac{1}{p^*}\int_{\Om}|z_k|^{p^*}dx \ \ \mbox{and} \\
	    &\int_{\Om}\big(|\na v_\la|^p+|\na z_k|^p\big)+\ba\int_{\Om}\big(|\na v_\la|^q+|\na z_k|^q\big) =\la\int_{\Om}|v_\la|^{1-\de}dx+\int_{\Om}\big(|v_\la|^{p^*}+|z_k|^{p^*}\big)dx.
	  \end{aligned}
	\end{equation}
  We assume
	\begin{align*}
	   \int_{\Om}|\na z_k|^p\ra l_1^p,\ \int_{\Om}|\na z_k|^q\ra l_2^q \ \ \mbox{and } \int_{\Om}|z_k|^{p^*}\ra d^{p^*}.
	\end{align*}
	We claim that $v_\la\neq 0$. On the contrary suppose $v_\la=0$, then  by Lemma \ref{lemm1}(ii), $l_1\neq 0$. Using the relation $Sd^p\le l_1^p$ and \eqref{eqb54}, we deduce that
	\begin{align*}
	   \theta_{\la}^-=I_\la(0)+\frac{1}{p}l_1^p+\frac{\ba}{q}l_2^q-\frac{1}{p^*}d^{p^*}\ge\Big(\frac{1}{p}-\frac{1}{p^*}\Big)(l_1^p+\ba l_2^q)\ge\frac{1}{n} S^\frac{n}{p}.
	\end{align*}
	Now we consider the following cases:\\
Case(i): If $\frac{2n}{n+2}<p<3$, then by Lemma \ref{lemm11}, we have
    \begin{align*}
    	\theta_{\la}^++\frac{1}{n}S^\frac{n}{p}=I_\la(u_\la)+\frac{1}{n} S^\frac{n}{p}>\theta_\la^-\ge \frac{1}{n} S^\frac{n}{p},
    \end{align*}
  this implies $\theta_{\la}^+>0$, which contradicts lemma \ref{lemm2}. \\
 Case(ii): If $p\in(1,\frac{2n}{n+2})\cup[3,n)$, then by lemma \ref{lemm9}, we have
    \begin{align*}
    	\frac{1}{n}S^\frac{n}{p}-D_0\la^\frac{p}{p-1+\de}>\theta_{\la}^-\ge\frac{1}{n} S^\frac{n}{p},
    \end{align*}
    which is also a contradiction. Hence in all cases we get $v_\la\neq 0$. From the assumption $0<\la<\la_*$, there exist $0<\ul t<\ov t$ such that $J^\prime_{v_\la}(\ul t)=0=J^\prime_{v_\la}(\ov t)$, and $\ul tv_\la\in N_\la^-$ and $\ov tv_\la\in N_\la^-$. We define $\eta,\; f:(0,\infty)\to\mb R$ as
      \begin{align*}
      	 \eta(t)=\frac{l_1^p}{p}t^p+\ba\frac{l_2^q}{q}t^q-\frac{t^{p^*}d^{p^*}}{p^*} \ \mbox{and } f(t)=J_{v_\la}(t)+\eta(t) \ \mbox{for }t>0.
      \end{align*}
     We consider the following cases:\begin{enumerate}
       \item[(a)]$\ov t<1$,
      \item[(b)] $\ov t\ge 1$ and $d>0$, and
      \item[(c)] $\ov t\ge 1$ and $d=0$.\end{enumerate}
      Case (a): Using \eqref{eqb54}, we get $f^\prime(1)=0$, and $f^\prime(\ov t)=\ov t^{p-1}l_1^p+\ba \ov t^{q-1}l_2^q -\ov t^{p^*-1}d^{p^*}\ge\ov t^{p-1}\big(l_1^p+\ba l_2^q-d^{p^*}\big)>0$. Therefore we see that $f$ is increasing on $[\ov t,1]$. Thus
      \begin{align*}
      	 \theta_{\la}^-= f(1)> f(\ov {t})\ge J_{v_\la}(\ov {t})+\frac{\ov t^p}{p} \big(l_1^p+\ba l_2^q- d^{p^*}\big)
      	 >J_{v_\la}(\ov t)> J_{v_\la}(\ul t)\ge \theta_{\la}^- ,
      	\end{align*}
      	which is a contradiction.\\
      Case (b): It is easy to see that there exists $t_m>0$ such that $\eta(t_m)\ge\frac{1}{n}S^\frac{n}{p}$, $\eta^\prime(t_m)=0$, $\eta^\prime(t)>0$ for all $0<t<t_m$ and $\eta^\prime(t)<0$ for all $t>t_m$. By the assumption $0<\la<\la_*$, we have $f(1)=\ds\max_{t\ge 0} f(t)\ge f(t_m)$. So, if $t_m\le 1$
        \begin{equation}\label{eqb55}
        	\theta_{\la}^-= f(1)\ge f(t_m)= J_{v_\la}(t_m)+\eta(t_m)\ge I_\la(\ul tv_\la)+\frac{1}{n}S^\frac{n}{p}\ge I_\la(u_\la)+\frac{1}{n}S^\frac{n}{p},
        \end{equation}
        which is a contradiction to lemma \ref{lemm11} and lemma \ref{lemm9}. Thus we have $t_m>1$. Since $f^\prime(t)\le 0$ for all $t\in[1,t_m]$, we have $J^\prime_{v_\la}(t)\le -\eta^\prime(t)\le 0$ for all $t\in [1,t_m]$. This gives either $t_m\le \ul t$ or $\ov t=1$. If $t_m\le\ul t$, then \eqref{eqb55} holds which yields a contradiction. Hence, $\ov t=1$, that is $v_\la\in N_\la^-$ and we have
      \begin{align*}
      	 \theta_{\la}^-= f(1)= I_\la(v_\la)+\frac{l_1^p}{p}+\ba\frac{l_2^q}{q}-\frac{d^{p^*}}{p^*}\ge I_\la(v_\la)+\frac{1}{n}S^\frac{n}{p}\ge I_\la(\ul tv_\la)+\frac{1}{n}S^\frac{n}{p}\ge I_\la(u_\la)+\frac{1}{n}S^\frac{n}{p},
      \end{align*}
     which is also a contradiction.\\
     Consequently only (c) holds. If $l_1\neq 0$, then we have $J^\prime_{v_\la}(1)<0$ and $J^{\prime\prime}_{v_\la}(1)<0$ which contradicts the fact that $\ov t\ge 1$.
     Thus $l_1=0$ that is, $v_k\ra v_\la$ strongly in $W^{1,p}_0(\Om)$. Therefore, $v_\la\in N_\la^-$ and $I_\la(v_\la)=\theta_{\la}^-$. \QED
\end{proof}
\textbf{Proof of Theorem \ref{thm3}:} Proof of the Theorem follows from Propositions \ref{prop2}, \ref{prop3} and Theorem \ref{thmA}.\QED

{
	Now we will prove the existence of second solution for all $\ba>0$ in the case $r=p^*$.
\begin{Lemma}\label{lemm19}
 Let $p\in\big(\frac{2n}{n+2},3)$, then	for all $\ba>0$ and $\la\in(0,\La)$  following holds
	$$ \ds\sup_{t\ge 0}I_\la(u_\la+tu_\e)<\frac{1}{n}S^\frac{n}{p}+I_\la(u_\la)$$
	in each of the following cases:
	\begin{enumerate}
		\item[(1)]  $\max\{p-1,1\}<q<\frac{n(p-1)}{n-1}$,
		\item[(2)]  $\frac{n(p-1)}{n-1}<q<\frac{n(p-1)+p}{n}$.
     \end{enumerate}
\end{Lemma}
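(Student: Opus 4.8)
The plan is to rerun the argument of Lemma \ref{lemm8} essentially verbatim, the only difference being that $\ba$ is now an arbitrary fixed positive constant rather than a quantity of size $\e^{\al_3}$. As there, I would first use continuity of $I_\la$ together with the coercivity coming from the $-t^{p^*}$ term to find $R_0>0$ with $I_\la(u_\la+tu_\e)<I_\la(u_\la)$ for all $t\ge R_0$, reducing matters to $0\le t\le R_0$. On that range I would subtract $t\langle I_\la'(u_\la),u_\e\rangle=0$ (valid since $u_\la$ solves $(P_\la)$) and insert the expansions \eqref{eqb78}, \eqref{eqb79} and the singular estimate \eqref{eqb42}. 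This cancels the linear-in-$t$ cross terms in the $p$-gradient, singular and critical parts exactly as before, and also cancels the $q$-gradient cross term $\ba t\int_\Om|\na u_\la|^{q-2}\na u_\la\na u_\e$, leaving for $0\le t\le R_0$
\[
I_\la(u_\la+tu_\e)-I_\la(u_\la)\le g_0(t)+\ba\frac{t^q}{q}\int_\Om|\na u_\e|^q-t^{p^*-1}C\e^{\frac{n-p}{p}}+o\big(\e^{\frac{n-p}{p}}\big),
\]
where $g_0(t)=\frac{t^p}{p}\int_{\mb R^n}|\na U_1|^p-\frac{t^{p^*}}{p^*}\int_{\mb R^n}U_1^{p^*}$ satisfies $\ds\sup_{t\ge0}g_0(t)=\frac1n S^{\frac{n}{p}}$, the negative term arises from $-t^{p^*-1}\int_\Om u_\la u_\e^{p^*-1}\sim-t^{p^*-1}C\e^{(n-p)/p}$ in \eqref{eqb79}, and the $o$-terms collect \eqref{eqb45}, $Lt^\rho\int_\Om u_\e^\rho$ and the errors $O(\e^{\al_1}),O(\e^{\al_2})$, all of order strictly higher than $\e^{(n-p)/p}$. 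Everything therefore reduces to showing that the one genuinely new contribution, $\ba\frac{t^q}{q}\int_\Om|\na u_\e|^q$, is $o(\e^{(n-p)/p})$ uniformly on $[0,R_0]$; since $\ba$ is a fixed constant it suffices to prove $\int_\Om|\na u_\e|^q=o(\e^{(n-p)/p})$.

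Next I would compute the size of $\int_\Om|\na u_\e|^q$ from the scaling $U_\e(x)=\e^{-\frac{n-p}{p}}U_1(x/\e)$, which gives $\int_\Om|\na u_\e|^q\approx\e^{\frac{n(p-q)}{p}}\int_{B_{2\mu/\e}}|\na U_1|^q$, together with the decay $U_1(y)\sim|y|^{-\frac{n-p}{p-1}}$, hence $|\na U_1(y)|\sim|y|^{-\frac{n-1}{p-1}}$ as $|y|\to\infty$. The integral $\int_{\mb R^n}|\na U_1|^q$ converges precisely when $q\frac{n-1}{p-1}>n$, that is $q>\frac{n(p-1)}{n-1}$. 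In case (2) the integral converges and $\int_\Om|\na u_\e|^q=O(\e^{n(p-q)/p})$, while in case (1) it diverges and truncation at radius $\sim\mu/\e$ yields $\int_\Om|\na u_\e|^q=O(\e^{q(n-p)/(p(p-1))})$.

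It then remains to match these orders against $(n-p)/p$. A direct computation shows $\frac{n(p-q)}{p}>\frac{n-p}{p}$ exactly when $q<\frac{n(p-1)+p}{n}$, which is the hypothesis of case (2), whereas $\frac{q(n-p)}{p(p-1)}>\frac{n-p}{p}$ exactly when $q>p-1$, which holds in case (1) since $q>\max\{p-1,1\}$. In both situations $\int_\Om|\na u_\e|^q=o(\e^{(n-p)/p})$, so for the fixed $\ba$ the positive term is dominated by $-t^{p^*-1}C\e^{(n-p)/p}$ once $\e$ is small, and the conclusion follows exactly as in Lemma \ref{lemm8}.

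The main technical point, and the reason the two disjoint windows for $q$ appear, is precisely this sharp matching of exponents: the stated constraints on $q$ are exactly what forces $\int_\Om|\na u_\e|^q$ to decay faster than $\e^{(n-p)/p}$. A secondary subtlety occurs when $q\ge2$, where the Taylor expansion of $\|\na(u_\la+tu_\e)\|_q^q$ beyond the cancelled linear term produces an intermediate quantity $\int_\Om|\na u_\la|^{q-2}|\na u_\e|^2\le C\int_{B_{2\mu}}|\na u_\e|^2$ that must also be $o(\e^{(n-p)/p})$; this needs no extra hypothesis, since $q\ge2$ together with $q<\frac{n(p-1)+p}{n}$ forces $p>\frac{3n}{n+1}$, which is exactly the condition guaranteeing $\int_{B_{2\mu}}|\na u_\e|^2=o(\e^{(n-p)/p})$ (in both the convergent and cut-off regimes for $\int|\na U_1|^2$). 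Verifying this compatibility is the one place where care is required; the rest is the routine machinery already set up for Lemma \ref{lemm8}.
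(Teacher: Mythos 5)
Your proposal is correct and follows essentially the same route as the paper: reduce to $0\le t\le R_0$, subtract the weak formulation of $(P_\la)$ to cancel the linear cross terms, and show that the surviving $\ba$-dependent contributions $\int_\Om|\na u_\e|^q$ (and the Taylor remainder of the $q$-gradient term) decay strictly faster than the leading negative term of order $\e^{(n-p)/p}$, with the two $q$-windows arising exactly from the two scaling regimes of $\int_\Om|\na u_\e|^l$ as you identify. The only (immaterial) difference is that the paper parametrizes the remainder as $C|\na u_\la|^{q-\nu}|\na u_\e|^\nu$ with $\nu$ chosen case by case in $(1,q)$ or $[q-1,2]$, whereas you fix $\nu=2$ for $q\ge 2$ and absorb the remainder into the $|\na u_\e|^q$ term for $1<q<2$; both verifications of the exponent inequalities agree.
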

\begin{proof}
	Using the following one dimensional inequality
	\begin{align*}
	(1+t^2+2t\cos\al)^\frac{q}{2}\le \begin{cases}
	1+t^q+qt\cos\al+Ct^\nu, \ \mbox{ if } 1<q<2, \mbox{ for all }\nu\in(1,q),\\
	1+t^q+qt\cos\al+Ct^\nu, \ \mbox{ if } 2\le q<3, \mbox{ for all }\nu\in[q-1,2], \end{cases}
	\end{align*}
	we can prove
	\begin{equation}\label{eqb81}
	  \int_\Om |\na(u_\la+tu_\e)|^q \le
	  \int_\Om\big(|\na u_\la|^q+t^q|\na u_\e|^q+qt|\na u_\la|^{q-2}\na u_\la \na u_\e+C|\na u_\la|^{q-\nu}|\na u_\e|^\nu \big),
	\end{equation}
	for all $\nu\in(1,q)$ if $1<q<2$ and $\nu\in[q-1,2]$ if $2\le q<3$.  Moreover, we have
	\begin{align}\label{eqb84}
	\int_{\Om}|\na u_\e|^l \le C\begin{cases}
	\e^{\frac{n-p}{p(p-1)}l}, \quad \mbox{if } 1\le l<\frac{n(p-1)}{n-1} \\
	\e^{n-\frac{n}{p}l}, \quad \ \mbox{ if } \frac{n(p-1)}{n-1}<l<p.
	\end{cases}
	\end{align}
	From \eqref{eqb50} it follows that we need to prove
	  $$\sup_{0\le t\le R_0}I_\la(u_\la+tu_\e)<\frac{1}{n}S^\frac{n}{p}+I_\la(u_\la).$$
	Using \eqref{eqb78}, \eqref{eqb79}, \eqref{eqb42} and \eqref{eqb81}, we deduce that
	\begin{align}\label{eqb83}
	I_\la(u_\la+tu_\e)-I_\la(u_\la)&= I_\la(u_\la+tu_\e)-I_\la(u_\la)\nonumber\\
	&-t\int_{\Om}\big(|\na u_\la|^{p-2}\na u_\la\na u_\e+\ba|\na u_\la|^{q-2}\na u_\la\na u_\e-\la u_\la^{-\de}u_\e-u_\la^{p^*-1}u_\e\big)dx\nonumber\\
	&\le \frac{t^p}{p}\int_\Om |\na u_\e|^p+\ba\frac{t^q}{q}\int_\Om |\na u_\e|^q +C\int_{\Om}|\na u_\la|^{q-\nu} |\na u_\e|^\nu+ L\;t^\rho\int_{\Om}u_\e^\rho \nonumber\\ &\quad-\frac{t^{p^*}}{p^*}\int_{\Om}u_\e^{p^*}-t^{p^*-1}\int_{\Om}u_\la u_\e^{p^*-1}+O(\e^{\al_4}),
	\end{align}
	where $\al_4>(n-p)/p$.
	Now we consider following cases:\\
	\noi\textbf{Case (1)}: If $\max\{p-1,1\}<q<\frac{n(p-1)}{n-1}$.\\
	Since $\max\{p-1,1\}<q<\frac{n(p-1)}{n-1}$, we choose $\nu>1$ such that
	\begin{align*}
	  \nu\in\begin{cases}
		(1,q)\cap(p-1,\frac{n(p-1)}{n-1}\big) \qquad \mbox{if }1<q<2,\\
		[q-1,2]\cap(p-1,\frac{n(p-1)}{n-1}\big) \ \mbox{ if }2\le q<3.
		\end{cases}
	\end{align*}
	Then, using the fact $|\na u_\la|\in L^\infty_\text{loc}(\Om)$ and \eqref{eqb84}, we obtain
	\begin{align*}
	\int_{\Om}|\na u_\e|^q \le C \e^{l_1} \ \mbox{ and } \int_{\Om}|\na u_\la|^{q-\nu} |\na u_\e|^\nu\le C\e^{l_2},
	\end{align*}
	where $l_1, l_2>\frac{n-p}{p}$. Thus, for $0\le t\le R_0$, taking into account \eqref{eqb45} and \eqref{eqb83}, we deduce that
	\begin{align*}
	I_\la(u_\la+tu_\e)-I_\la(u_\la)&\le \frac{t^p}{p}\int_\Om |\na U_1|^p  -\frac{t^{p^*}}{p^*} \int_{\Om}|U_1|^{p^*} -t^{p^*-1}C\e^\frac{n-p}{p}+O(\e^{l_3}),
	\end{align*}
	where $l_3>\frac{(n-p)}{p}$. Following the approach as in Lemma \ref{lemm8} we get the required result in this case.
	
	\noi\textbf{Case(2)}: If $\frac{n(p-1)}{n-1}<q<\frac{n(p-1)+p}{n}$.\\
	 We note that there exists
	 \begin{align*}
	 	\nu\in\begin{cases}
	 	  (1,q)\cap\big(\frac{n(p-1)}{n-1},\infty) \qquad \mbox{if }1<q<2,\\
	 	   [q-1,2]\cap\big(\frac{n(p-1)}{n-1},\infty) \ \mbox{ if }2\le q<3.
	 	\end{cases}
	 \end{align*}
	  In this case using \eqref{eqb84} and \eqref{eqb45} in \eqref{eqb83}, we deduce that
	 \begin{align*}
	   I_\la(u_\la+tu_\e)-I_\la(u_\la)&\le \frac{t^p}{p}\int_\Om |\na U_1|^p + C_1\e^{n-\frac{n}{p}q}+C_2\e^{n-\frac{n}{p}\nu} -\frac{t^{p^*}}{p^*} \int_{\Om}|U_1|^{p^*}-t^{p^*-1}C\e^\frac{n-p}{p}\\ &\qquad+O(\e^{l_4}),
	 \end{align*}
	where $l_4>(n-p)/p$. Using the fact that $\nu<q<\frac{n(p-1)+p}{n}$, we have $n-\frac{n}{p}\nu>n-\frac{n}{p}q>\frac{n-p}{p}$, and hence
	 \begin{align*}
	 I_\la(u_\la+tu_\e)-I_\la(u_\la)&\le \frac{t^p}{p}\int_\Om |\na U_1|^p  -\frac{t^{p^*}}{p^*} \int_{\Om}|U_1|^{p^*} -t^{p^*-1}C\e^\frac{n-p}{p}+O(\e^{l_6}),
	 \end{align*}
	 where $l_6>(n-p)/p$. Now approaching as Case(1) we can complete the proof.\QED
	\end{proof}

\textbf{Proof of Theorem \ref{thm6}}: With the help of Lemma \ref{lemm19} approaching the proof in same way as in Lemma \ref{lemm11} we can show that $\theta_{\la}^-<I_\la(u_\la)+\frac{1}{n}S^\frac{n}{p}$ for all $\la\in(0,\La)$ and $\ba>0$. Then following the proof of Proposition \ref{prop3} we get $v_\la\in N_\la^-$ such that $I_\la(v_\la)=\theta_{\la}^-$ for all $\la\in(0,\La)$ and $\ba>0$. Now with the help of Theorem \ref{thmA} we see that $v_\la$ is a solution of $(P_\la)$.\QED
}

\section{Global existence result}
In this section we prove the global existence and non existence result (for all $\la$ and $\ba$) for problem $(P_\la)$.  Let us define \[\La^*=\ds\sup\{\la>0: (P_\la) \text{ has a solution} \}.\]
\begin{Lemma}\label{lemm16}
	We have $0<\La^*<\infty$.
\end{Lemma}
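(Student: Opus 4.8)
The plan is to establish the two inequalities separately: $\La^*>0$ by exhibiting solutions for small $\la$ together with a monotonicity (downward closedness) property of the solvability set, and $\La^*<\infty$ by proving nonexistence for large $\la$ via a pointwise lower bound on the nonlinearity combined with a first--eigenfunction test.

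For the lower bound, Theorem \ref{thm1} and Theorem \ref{thm2} already give a solution of $(P_\la)$ for all sufficiently small $\la>0$, so the solvability set is nonempty. I would then observe that it is an interval: if $u$ solves $(P_\la)$ and $0<\la'<\la$, then $\la' s^{-\de}+s^{r-1}\le\la s^{-\de}+s^{r-1}$ shows that $u$ is a weak supersolution of $(P_{\la'})$, while by Lemma \ref{lemm10} the function $\ul u_{\la'}$ solves $(S_{\la'})$ and hence is a subsolution of $(P_{\la'})$ lying below $u$ by Lemma \ref{lemm18}. The sub--supersolution scheme then produces a solution of $(P_{\la'})$, so $\{\la>0:(P_\la)\text{ is solvable}\}$ is downward closed and contains an interval $(0,\eta)$, giving $\La^*>0$.

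For $\La^*<\infty$ the decisive point is a pointwise estimate for $f(s):=\la s^{-\de}+s^{r-1}$. Since $q-1+\de>0$ and $r>q$, the ratio $f(s)/s^{q-1}=\la s^{1-q-\de}+s^{r-q}$ tends to $+\infty$ both as $s\to0^+$ and as $s\to\infty$, so it attains a positive minimum $\La_0(\la)$; computing the minimiser $s_*\sim\la^{1/(r-1+\de)}$ gives $\La_0(\la)=c_0\,\la^{(r-q)/(r-1+\de)}\to\infty$ as $\la\to\infty$. In particular there is a finite threshold beyond which $f(s)>\la_1(q,\ba)\,s^{q-1}$ for every $s>0$. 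Assuming $u$ solves $(P_\la)$ for such a $\la$, let $\psi>0$ be the first eigenfunction of $-\ba\Delta_q$, i.e. $-\ba\Delta_q\psi=\la_1(q,\ba)\psi^{q-1}$, and test \eqref{eqbw} with $\phi=\psi^q/u^{q-1}\ge0$. Picone's inequality for the $q$-Laplacian yields $\ba\int_\Om|\na u|^{q-2}\na u\cdot\na\phi\le\la_1(q,\ba)\int_\Om\psi^q$, whereas the right--hand side of the tested identity is $\int_\Om f(u)\,\psi^q u^{-(q-1)}\ge\La_0(\la)\int_\Om\psi^q$; combining the two, the remaining $p$-Laplacian term is forced to satisfy $\int_\Om|\na u|^{p-2}\na u\cdot\na\phi\ge(\La_0(\la)-\la_1(q,\ba))\int_\Om\psi^q>0$.

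The crux is therefore the control of this leftover $p$-Laplacian term, which carries no definite sign (for instance it equals $\int_\Om|\na\psi|^p$ when $u=\psi$), so a single--power Picone inequality does not close the argument by itself. To overcome this I would use the a priori bound $u\ge\e_\la\hat{\phi}$ of Theorem \ref{thm4}, where $\e_\la$ can be taken to diverge as $\la\to\infty$: writing $w:=\psi/u$ (so that $w\le C\e_\la^{-1}$, since $u\ge\e_\la\hat{\phi}\ge c\e_\la\psi$) and expanding, $\int_\Om|\na u|^{p-2}\na u\cdot\na\phi=q\int_\Om w^{q-1}|\na u|^{p-2}\na u\cdot\na\psi-(q-1)\int_\Om w^q|\na u|^p\le C\,\e_\la^{-(q-1)}\|\na u\|_p^{p-1}\|\na\psi\|_p$. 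Thus the leftover term is forced to be small while the lower bound it must exceed grows like $\La_0(\la)\to\infty$; quantifying the growth of $\|\na u\|_p$ (through the energy identity obtained by testing \eqref{eqbw} with $u$) against the factor $\e_\la^{-(q-1)}$ then yields a contradiction for $\la$ large, proving $\La^*<\infty$. This balancing of the lower--order cross term against the blow-up of $\La_0(\la)$ is the main obstacle, and it is exactly here that the $(p,q)$-structure (rather than a single operator) makes the argument delicate.
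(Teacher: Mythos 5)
Your lower bound $\La^*>0$ is fine and matches the paper: Theorems \ref{thm1} and \ref{thm2} give solvability for all small $\la$, so the supremum is positive (the downward-closedness via sub/supersolutions that you add is not needed here, though it is correct and is exactly what the paper uses later, in Lemma \ref{lemm17}). You also identify the same key pointwise fact as the paper for the upper bound, namely that $\la s^{-\de}+s^{r-1}\ge \La_0(\la)\,s^{q-1}$ for all $s>0$ with $\La_0(\la)\sim\la^{(r-q)/(r-1+\de)}\to\infty$, so that for large $\la$ any solution dominates, in the weak sense, the generalized eigenvalue problem at a level above $\la_1(q,\ba)$.

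However, the way you try to exploit this fact has a genuine gap. After testing with $\psi^q/u^{q-1}$ and applying Picone to the $q$-Laplacian part, you are left with the cross term $\int_\Om|\na u|^{p-2}\na u\cdot\na(\psi^q/u^{q-1})$, and you propose to show it is small by bounding $w=\psi/u\le C\e_\la^{-1}$ and then controlling $\|\na u\|_p$ ``through the energy identity obtained by testing \eqref{eqbw} with $u$.'' That identity reads $\|\na u\|_p^p+\ba\|\na u\|_q^q=\la\int_\Om u^{1-\de}+\int_\Om u^{r}$ with $r>p$, so the dominant term sits on the wrong side and yields no upper bound on $\|\na u\|_p$ whatsoever; in fact, since any solution satisfies $u\ge \ul u_\la\ge\e_\la\hat\phi$ with $\e_\la\to\infty$ (Lemmas \ref{lemm10} and \ref{lemm18}), one expects $\|\na u\|_p\to\infty$ along any putative family of solutions, so the required smallness of the leftover term is simply not available. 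The step you yourself flag as ``the crux'' is therefore not closed. The paper sidesteps this entirely: from the same pointwise inequality, a solution $u_m$ of $(P_{\la_m})$ with $\la_m$ large is a weak \emph{supersolution} of $(Q_\e):\ -\Delta_p u-\ba\Delta_q u=(\la_1(q,\ba)-\e)u^{q-1}$, while $\varrho\hat\phi$ with $\varrho$ small is a subsolution lying below $u_m$ (Theorem \ref{thm4}); monotone iteration then produces a positive solution of $(Q_\e)$ with parameter strictly below $\la_1(q,\ba)$, contradicting the nonexistence part of Tanaka's theorem for the generalized eigenvalue problem. No quantitative control of $u$ is needed. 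To repair your argument you should replace the Picone computation by this sub/supersolution comparison (or find an independent a priori bound on $\|\na u\|_p$, which the equation does not readily provide).
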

\begin{proof}
  With the help of Theorems \ref{thm1} and \ref{thm2}, we infer that $\La^*\ge\la_*\ge\La>0$. Next, we will show $\La^*<\infty$. On the contrary suppose there exists a non-decreasing sequence $\{\la_k\}$ such that $\la_k\ra\infty$ as $k\ra\infty$ and $(P_{\la_k})$ has a solution $u_k$.  There exists $\ul \la>0$ such that
	  \begin{align*}
	  	\frac{\la}{t^\de}+t^{r-1}\ge(\la_1(q,\ba)-\e)t^{q-1}, \ \text{for all }t>0, \e>0 \ \text{and } \la>\ul \la.
	  \end{align*}
	Choose $\la_m>\ul\la$, then $u_m$ is a super solution of
	  \begin{equation*}
	  (Q_\e)\left\{\begin{array}{rllll}
	  -\Delta_{p}u-\ba\Delta_{q}u & = (\la_1(q,\ba)-\e) u^{q-1}, \ u>0 \;  \text{ in } \Om \\ u&=0 \quad \text{ on } \pa\Om,
	  \end{array}
	  \right.
	  \end{equation*}
	  that is, for all $\phi\in W^{1,p}_0(\Om)$ with $\phi\ge 0$, we have
	   \begin{align*}
	   	  \int_\Om(|\na u_m|^{p-2}\na u_m+\ba|\na u_m|^{q-2}\na u_m)\na\phi ~dx\ge \int_\Om \big( (\la_1(q,\ba)-\e)u_m^{q-1}\big)\phi~ dx.
	   \end{align*}
	   We choose $\varrho>0$ small enough such that $\varrho\hat{\phi}<u_m$ (this can be done because of Theorem \ref{thm4}) and $\varrho\hat{\phi}$ is a subsolution of $(Q_\e)$. That is, for all $\phi\in W^{1,p}_0(\Om)$ with $\phi\ge 0$, we have
	   \begin{align*}
	   	  \int_\Om\big(|\na (\varrho\hat{\phi})|^{p-2}\na (\varrho\hat{\phi})+\ba|\na (\varrho\hat{\phi})|^{q-2}\na(\varrho\hat{\phi}) \big)\na\phi ~dx\le  \big( (\la_1(q,\ba)-\e)\int_\Om(\varrho\hat{\phi})^{q-1}\big)\phi~ dx.
	   \end{align*}
	  By monotone iteration procedure we obtain a solution $w$ for $(Q_\e)$ for $\e>0$ such that $0<\varrho\hat{\phi}\le w\le u_m$, which contradicts \cite[Theorem 1]{tanaka}. This completes the proof of Lemma.\QED
\end{proof}

\begin{Lemma}\label{lemm12}
	Let $\ul u, \ov u\in W^{1,p}_0(\Om)$ be such that $\ul u$ is a weak subsolution  and $\ov u$ is a weak supersolution of $(P_\la)$ satisfying $\ul u\le\ov u$ a.e. in $\Om$. Then there exists a weak solution $u\in W^{1,p}_0(\Om)$ of $(P_\la)$ such that $\ul u\le u\le \ov u$ a.e. in $\Om$.
\end{Lemma}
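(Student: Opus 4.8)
The plan is to produce the solution as the global minimizer of a truncated energy functional, and then to recover both the order bounds $\ul u\le u\le\ov u$ and the equation itself from the sub/supersolution inequalities together with the strict monotonicity of $A_{p,q}$.

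First I would truncate the reaction term in the $u$-variable. Writing $f(x,s)=\la s^{-\de}+s^{r-1}$ for $s>0$, set $T(x,s):=\max\{\ul u(x),\min\{s,\ov u(x)\}\}$ and $b(x,s):=f(x,T(x,s))$; this is well defined because $\ul u\le\ov u$, and $0\le b(x,s)\le \la\,\ul u(x)^{-\de}+\ov u(x)^{r-1}=:\Phi(x)$ for a.e.\ $x$ and every $s\in\R$. With $B(x,s):=\int_0^s b(x,\tau)\,d\tau$ I define $\Psi:W^{1,p}_0(\Om)\to\R$ by $\Psi(v)=\frac1p\|\na v\|_p^p+\frac{\ba}{q}\|\na v\|_q^q-\int_\Om B(x,v)\,dx$. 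The gain of the truncation is that $b$ is no longer genuinely singular in its second argument: the only singularity that survives sits in the fixed weight $\Phi$, and $|B(x,s)-B(x,t)|\le\Phi(x)|s-t|$.

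Second, I would run the direct method on $\Psi$. The crucial estimate is $\int_\Om\Phi|v|\,dx\le C\|v\|$: the part $\int\ov u^{r-1}|v|$ is controlled by H\"older and Sobolev because $\ov u\in L^{p^*}$ and $r\le p^*$ force $\ov u^{r-1}\in L^{(p^*)'}$, while the singular part $\int\ul u^{-\de}|v|$ is handled using a lower bound $\ul u\ge\e\hat\phi\gtrsim\mathrm{dist}(\cdot,\pa\Om)$ (valid for the positive subsolutions in question, cf.\ Lemmas \ref{lemm10} and \ref{lemm18}) by writing $\ul u^{-\de}|v|\le C\,\mathrm{dist}^{1-\de}\,(|v|/\mathrm{dist})$ and invoking Hardy's inequality and $\de<1$. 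This yields $\Psi(v)\ge\frac1p\|v\|^p-C\|v\|$, so $\Psi$ is coercive and bounded below; it is also weakly lower semicontinuous, since the gradient part is convex and $v\mapsto\int_\Om B(x,v)$ is weakly continuous (along a subsequence $v_k\to u$ strongly in $L^s$ for $s<p^*$ and a.e., the integrands are equi-integrable by a Vitali argument, with $\ov u^{r-1}\in L^{(p^*)'}$ and the singular weight again tamed by Hardy's inequality). Hence $\Psi$ attains a global minimum at some $u\in W^{1,p}_0(\Om)$, and the same bounds make $\Psi$ G\^ateaux differentiable with $\langle\Psi'(v),\phi\rangle=\int_\Om(|\na v|^{p-2}\na v+\ba|\na v|^{q-2}\na v)\na\phi-\int_\Om b(x,v)\phi$, so $u$ solves the truncated equation $A_{p,q}u=b(x,u)$ weakly. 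I expect this integrability/coercivity step — keeping the singular weight under control on all of $W^{1,p}_0(\Om)$ — to be the main obstacle.

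Finally I would prove $\ul u\le u\le\ov u$, which deactivates the truncation. Testing the truncated equation with $(\ul u-u)_+\ge0$ and subtracting the subsolution inequality for $\ul u$ tested against the same function, I note that on $\{\ul u>u\}$ one has $T(x,u)=\ul u$, hence $b(x,u)=f(x,\ul u)$, so the reaction terms cancel and only the difference of the operators remains:
\[
\int_{\{\ul u>u\}}\big(|\na\ul u|^{p-2}\na\ul u-|\na u|^{p-2}\na u+\ba(|\na\ul u|^{q-2}\na\ul u-|\na u|^{q-2}\na u)\big)\cdot\na(\ul u-u)\,dx\le0 .
\]
By the strict monotonicity of $A_{p,q}$ (the inequality recalled in the proof of Lemma \ref{lemm18}) the integrand is nonnegative, so the set $\{\ul u>u\}$ has measure zero and $\ul u\le u$ a.e.; the symmetric computation with $(u-\ov u)_+$ and the supersolution inequality for $\ov u$ gives $u\le\ov u$ a.e. Consequently $T(x,u)=u$, whence $b(x,u)=f(x,u)=\la u^{-\de}+u^{r-1}$, so $u$ is a weak solution of $(P_\la)$ with $u\ge\ul u>0$ a.e., and $\ul u\le u\le\ov u$, as required.
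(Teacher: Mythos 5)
Your proof is correct in substance but takes a genuinely different route from the paper. The paper does not truncate the nonlinearity: it minimizes the original functional $I_\la$ directly over the closed convex order interval $M=\{v\in W^{1,p}_0(\Om):\ul u\le v\le\ov u\}$ (where coercivity and the control of $\int u^{1-\de}$ and $\int u^r$ are automatic, since every competitor is squeezed between $\ul u$ and $\ov u$), and then removes the obstacle constraint by the Haitao-type variational-inequality argument: it tests with $v_\e=u+\e\phi-\phi^\e+\phi_\e$, where $\phi^\e=(u+\e\phi-\ov u)_+$ and $\phi_\e=(u+\e\phi-\ul u)_-$, and shows $\frac1\e E^\e\ge o(1)$, $\frac1\e E_\e\le o(1)$ using the sub/supersolution inequalities together with the strict monotonicity of the vector fields $\xi\mapsto|\xi|^{p-2}\xi$, $|\xi|^{q-2}\xi$. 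Your scheme instead minimizes a truncated functional over all of $W^{1,p}_0(\Om)$ and deactivates the truncation afterwards by the comparison step; the monotonicity inequality enters for you at that final stage rather than in the $E^\e$ estimates. The trade-off is that your route must establish coercivity and weak lower semicontinuity of $\Psi$ on the whole space, which forces you to control $\int_\Om\ul u^{-\de}|v|$ for arbitrary $v\in W^{1,p}_0(\Om)$; your Hardy-inequality argument does this, but only under the boundary lower bound $\ul u\gtrsim\mathrm{dist}(\cdot,\pa\Om)$, which is not part of the literal hypotheses of the lemma. You flag this honestly, and it is harmless in practice: in the only place the lemma is applied (Lemma \ref{lemm17}) one has $\ul u=\ul u_\la\ge\e_\la\hat\phi$ by Lemma \ref{lemm10}, and the paper's own proof likewise needs $\ul u^{-\de}\phi\in L^1(\Om)$ implicitly for its weak formulations to make sense. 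Your Vitali/equi-integrability treatment of the critical term $\ov u^{p^*-1}|v_k-u|$ and of the singular weight is the right way to get weak continuity of the potential term, so no step of your argument actually fails.
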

\begin{proof}
	The proof given here is an adaptation of \cite{haitao}. Set
	 $M:= \{ u\in W^{1,p}_0(\Om): \ul u\le u\le \ov u \text{ a.e. in }\Om\},$
	 then $M$ is closed and convex. It is easy to verify that $I_\la$ is weakly lower semicontinuous on $M$. Therefore, there exists a relative minimizer $u$ of $I_\la$ on $M$. We will show that $u$ is a weak solution of $(P_\la)$. For $\phi\in W^{1,p}_0(\Om)$ and $\e>0$, let $v_\e=u+\e\phi-\phi^\e+\phi_\e\in M$, where
	   \begin{align*}
	       \phi^\e:=(u+\e\phi-\ov u)_+\ge 0 \ \mbox{and } \phi_\e:=(u+\e\phi-\ul u)_-\ge 0.
	   \end{align*}
	For $0<t<1$ we see that $u+t(v_\e-u)\in M$. Therefore using the fact that $u$ is a relative minimizer of $I_\la$ on $M$, we have
	  \begin{align*}
	 	0&\le \lim_{t\ra 0}\frac{I_\la(u+t(v_\e-u))-I_\la(u)}{t}\\
	 	&= \int_\Om \big(|\na u|^{p-2}\na u+\ba|\na u|^{q-2}\na u\big)\na(v_\e-u)-\int_\Om (\la u^{-\de}+u^{r-1})(v_\e-u)dx,
	 \end{align*}
	which on using definition of $v_\e$ simplifies to
	 \begin{equation}\label{eqb71}
	 	\int_{\Om}\big(|\na u|^{p-2}\na u\na\phi+\ba|\na u|^{q-2}\na u\na\phi-\la u^{-\de}\phi-u^{r-1}\phi\big)dx\ge \frac{1}{\e}\big(E^\e-E_\e\big),
	 \end{equation}
	where
	 \begin{align*}
	 	&E^\e =\int_{\Om}\big(|\na u|^{p-2}\na u\na\phi^\e+\ba|\na u|^{q-2}\na u\na\phi^\e-\la u^{-\de}\phi^\e-u^{r-1}\phi^\e\big)dx \ \mbox{ and}\\
	 	&E_\e=\int_{\Om}\big(|\na u|^{p-2}\na u\na\phi_\e+\ba|\na u|^{q-2}\na u\na\phi_\e-\la u^{-\de}\phi_\e-u^{r-1}\phi_\e\big)dx.
	 \end{align*}
  Now we will estimate $\frac{1}{\e}E^\e$. For this, set $\Om^\e=\{x\in\Om: (u+\e\phi)(x)\ge\ov u(x)>u(x)\}$. Then
   \begin{align*}
   	 \int_{\Om}|\na u|^{p-2}\na u\na\phi^\e&=\int_{\Om^\e}|\na u|^{p-2}\na u\na(u+\e\phi-\ov u) \\
   	 &=\int_{\Om^\e}\big(|\na u|^{p-2}\na u-|\na\ov u|^{p-2}\na\ov u\big)\na(u-\ov u)  +\int_{\Om^\e}|\na\ov u|^{p-2}\na\ov u\;\na(u-\ov u) \\
   	 &\qquad +\e \int_{\Om^\e}|\na u|^{p-2}\na u\na\phi \\
   	 &\ge C_p\begin{cases}
   	    \ds\int_{\Om^\e}|\na(u-\ov u)|^p, \ \mbox{ if }p\ge 2,\\
   	    \ds\int_{\Om^\e}\frac{|\na(u-\ov u)|^2}{\big(|\na u|+|\na\ov u|\big)^{2-p}}, \ \mbox{ if } 1<p<2,
   	 \end{cases}
   	 +\int_{\Om^\e}|\na\ov u|^{p-2}\na\ov u\;\na(u-\ov u)\\
   	 & \qquad +\e \int_{\Om^\e}|\na u|^{p-2}\na u\na\phi \\
   	 &\ge \int_{\Om^\e}|\na\ov u|^{p-2}\na\ov u\;\na(u-\ov u)+\e \int_{\Om^\e}|\na u|^{p-2}\na u\na\phi.
   \end{align*}
   Similar result holds for $\ds \int_{\Om}|\na u|^{q-2}\na u\na\phi^\e$ also. Thus we obtain
    \begin{align*}
    	E^\e&\ge \int_{\Om^\e}\big(|\na\ov u|^{p-2}\na\ov u+\ba|\na\ov u|^{q-2}\na\ov u\big)\na(u-\ov u)+\e \int_{\Om^\e}\big(|\na u|^{p-2}\na u+\ba|\na u|^{q-2}\na u\big)\na\phi \\
    	&\qquad-\int_{\Om^\e}(\la u^{-\de}+u^{r-1})\phi^\e \\
    	&\ge\int_{\Om}\big(|\na\ov u|^{p-2}\na\ov u+\ba|\na\ov u|^{q-2}\na\ov u\big)\na\phi^\e-\int_{\Om^\e}(\la u^{-\de}+u^{r-1})\phi^\e \\
    	&\qquad+\e\int_{\Om^\e}\big(|\na u|^{p-2}\na u+\ba|\na u|^{q-2}\na u-|\na\ov u|^{p-2}\na\ov u-\ba|\na\ov u|^{q-2}\na\ov u\big)\na\phi,
    \end{align*}
    which on using the fact that $\ov u$ is a weak super solution of $(P_\la)$, implies
     \begin{align*}
     	E^\e&\ge \e\int_{\Om^\e}\big(|\na u|^{p-2}\na u-\ba|\na u|^{q-2}\na u-|\na\ov u|^{p-2}\na\ov u+\ba|\na\ov u|^{q-2}\na\ov u\big)\na\phi \\
     	&\qquad+\int_{\Om^\e}(\la\ov u^{-\de}+\ov u^{r-1}-\la u^{-\de}-u^{r-1})\phi^\e.
     \end{align*}
   Thus,
   \begin{align*}
   	\frac{1}{\e}E^\e\ge& \int_{\Om^\e}\big(|\na u|^{p-2}\na u-\ba|\na u|^{q-2}\na u-|\na\ov u|^{p-2}\na\ov u+\ba|\na\ov u|^{q-2}\na\ov u\big)\na\phi \\
   	&\quad-\la\int_{\Om^\e}|\ov u^{-\de}-u^{-\de}||\phi| \\
   	 &=o(1), \ \mbox{as }\e\ra 0,
   \end{align*}
   since $|\Om^\e|\ra 0$ as $\e\ra 0$. An analogous argument shows that
     \begin{align*}
     	\frac{1}{\e}E_\e\le o(1), \ \mbox{ as }\e\ra 0.
     \end{align*}
   Thus, from \eqref{eqb71} letting $\e\ra 0$, we obtain
    \begin{align*}
    		\int_{\Om}\big(|\na u|^{p-2}\na u\na\phi+\ba|\na u|^{q-2}\na u\na\phi-\la u^{-\de}\phi-u^{r-1}\phi\big)dx\ge 0.
    \end{align*}
  Since $\phi\in W^{1,p}_0(\Om)$ was arbitrary, so taking $-\phi$ in place of $\phi$, we get
  \begin{align*}
  \int_{\Om}\big(|\na u|^{p-2}\na u\na\phi+\ba|\na u|^{q-2}\na u\na\phi-\la u^{-\de}\phi-u^{r-1}\phi\big)dx= 0, \ \mbox{ for all } \phi\in W^{1,p}_0(\Om).
  \end{align*}
  \QED
\end{proof}

\begin{Lemma}\label{lemm17}
	For $\la\in(0,\La^*]$, $(P_\la)$ has a weak solution $u_\la$ in $W^{1,p}_0(\Om)$.
\end{Lemma}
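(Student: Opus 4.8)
The plan is to treat the open range $\la\in(0,\La^*)$ and the endpoint $\la=\La^*$ separately: the former is a one-step sub--supersolution argument, while the latter needs a monotone limiting procedure.

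For $\la\in(0,\La^*)$, I would first use the definition of $\La^*$ to pick $\mu$ with $\la<\mu<\La^*$ for which $(P_\mu)$ has a solution $u_\mu$. Because $\mu>\la$ and $u_\mu>0$ we have $\la u_\mu^{-\de}+u_\mu^{r-1}\le\mu u_\mu^{-\de}+u_\mu^{r-1}$, so $u_\mu$ is a weak supersolution of $(P_\la)$. The unique solution $\ul u_\la$ of $(S_\la)$ from Lemma \ref{lemm10} obeys $-\Delta_p\ul u_\la-\ba\Delta_q\ul u_\la=\la\ul u_\la^{-\de}\le\la\ul u_\la^{-\de}+\ul u_\la^{r-1}$ and is thus a weak subsolution; moreover Lemma \ref{lemm18} gives $\ul u_\la\le u_\mu$ a.e. in $\Om$. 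The ordered pair $\ul u_\la\le u_\mu$ then feeds into Lemma \ref{lemm12}, which yields a weak solution $u_\la$ with $\ul u_\la\le u_\la\le u_\mu$. This settles every $\la<\La^*$.

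For $\la=\La^*$ I would approximate from below. Choose $\la_k\uparrow\La^*$ with $\la_k<\La^*$ and, using the construction above together with monotone (Perron) iteration started at $\ul u_{\la_k}$, take $u_k$ to be the \emph{minimal} solution of $(P_{\la_k})$. Since $u_{k+1}$ is a supersolution of $(P_{\la_k})$ lying above $\ul u_{\la_k}$, minimality forces $u_k\le u_{k+1}$, so $\{u_k\}$ increases; simultaneously $u_k\ge\ul u_{\la_k}\ge\ul u_{\la_1}\ge\e_{\la_1}\hat\phi$ provides a uniform strictly positive lower barrier that neutralizes the singularity. Granting a uniform bound $\|u_k\|\le C$ (discussed below), I extract $u_k\rightharpoonup u^*$ in $W^{1,p}_0(\Om)$, identify the pointwise limit through monotonicity, bound the singular integrand by $\La^*(\e_{\la_1}\hat\phi)^{-\de}|\phi|\in L^1(\Om)$ and apply dominated convergence, and recover the remaining terms by strong convergence; the limit $u^*$ is then a weak solution of $(P_{\La^*})$ with $u^*\ge\e_{\la_1}\hat\phi>0$.

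The crux is the uniform estimate, which is delicate precisely in the critical case $r=p^*$ where $\int_\Om u_k^{p^*}$ resists a direct bound and concentration threatens compactness. I would control it by observing that each minimal solution $u_k$ is semistable, so $J_{u_k}''(1)\ge 0$, i.e. $u_k\in\ov{N_{\la_k}^+}$; on this set the computation of Lemma \ref{lemm2} gives $I_{\la_k}(u_k)\le 0$. Since $u_k\in N_{\la_k}$, writing $I_{\la_k}(u_k)=I_{\la_k}(u_k)-\tfrac1r\langle I_{\la_k}'(u_k),u_k\rangle$ turns this energy bound into $(\tfrac1p-\tfrac1r)\|u_k\|^p\le C\|u_k\|^{1-\de}$ after absorbing $\int_\Om u_k^{1-\de}\le C\|u_k\|^{1-\de}$, and $1-\de<p$ yields $\|u_k\|\le C$. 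The same negativity of the energy keeps the limiting fibering analysis below the threshold $\tfrac1n S^{n/p}$, so that strong convergence $u_k\to u^*$ follows exactly as in Proposition \ref{prop2}; establishing semistability of the minimal branch is the one genuinely technical point, while in the subcritical case $r<p^*$ the compactness of $W^{1,p}_0(\Om)\hookrightarrow L^r(\Om)$ makes the whole endpoint argument routine.
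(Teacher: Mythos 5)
Your treatment of the range $\la\in(0,\La^*)$ coincides with the paper's: a supersolution obtained from a solution of $(P_{\mu})$ with $\la<\mu<\La^*$, the subsolution $\ul u_\la$ from Lemma \ref{lemm10}, the ordering from Lemma \ref{lemm18}, and Lemma \ref{lemm12} to produce a solution in between. That part is fine.

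The endpoint $\la=\La^*$ is where there is a genuine gap. Your uniform bound $\|u_k\|\le C$ rests on the claim that the minimal solution $u_k$ of $(P_{\la_k})$ is semistable, so that $J_{u_k}''(1)\ge 0$ and hence $I_{\la_k}(u_k)\le 0$ via the computation of Lemma \ref{lemm2}. Nothing in the paper or in your sketch establishes semistability of a minimal branch for this singular $(p,q)$-Laplacian; even making sense of a second variation is delicate because of the term $u^{-\de}$ and the degeneracy of the operator, and you yourself flag this as the one unproven technical point --- but it is precisely the point on which your bound rests. The paper gets the needed inequality for free from the construction: the solution produced by Lemma \ref{lemm12} is, by construction, a minimizer of $I_{\la_k}$ over the order interval $M$ between $\ul u_{\la_k}$ and the supersolution, and $\ul u_{\la_k}\in M$, so
\begin{align*}
I_{\la_k}(u_k)\le I_{\la_k}(\ul u_{\la_k})\le \tilde{I}_{\la_k}(\ul u_{\la_k})<0,
\end{align*}
since dropping $-\frac{1}{r}\int_\Om |\cdot|^r$ only increases the energy and $\ul u_{\la_k}$ is the (negative-energy) global minimizer of $\tilde{I}_{\la_k}$. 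Combined with the Nehari identity $J_{u_k}'(1)=0$, this yields boundedness exactly as in your final display. Once you adopt this observation, the insistence on minimal solutions, the monotonicity of the sequence, and the concentration--compactness discussion below the level $\frac{1}{n}S^{n/p}$ all become unnecessary for this lemma: the paper passes to the limit in the weak formulation using only weak and a.e.\ convergence together with the uniform lower barrier $u_k\ge\ul u_{\la_1}>0$ to dominate the singular term.
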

\begin{proof}
	Fix $\la\in(0,\La^*)$. Let $\ul u_\la$ be the solution of the purely singular problem $(S_\la)$ (obtained in Lemma \ref{lemm10}). By definition of $\La^*$, there exists $\bar{\la}\in(\la,\La^*)$ such that $(P_{\bar\la})$ has a solution $u_{\bar\la}$. Then, by the weak formulations of $(P_{\bar\la})$ and $(S_\la)$, it is easy to see that $u_{\bar\la}$ is a supersolution and $\ul u_\la$ is a subsolution of $(P_\la)$. Applying Lemma \ref{lemm18} for $\bar{ u}= u_{\bar\la}$ and $\ul u_\la$, we get $\ul u_\la\le u_{\bar\la}$ a.e. in $\Om$.
	Then employing Lemma \ref{lemm12} for $\ul u=\ul u_\la$ and $\ov u= u_{\bar\la}$ when $\la\in(0, \La^*)$, we get a solution $u_\la$ of $(P_\la)$ such that $\ul u_\la\le u_\la\le u_{\bar\la}$. Moreover, by the fact that $u_\la$ is a minimizer of $I_\la$ on $M$, we deduce that $I_\la(u_\la)\le I_\la(\ul u_\la)\le\tilde{I_\la}(\ul u_\la)<0$.\\
	For $\la=\La^*$, let $\la_k\in(0, \La^*)$ be an increasing sequence such that $\la_k\ra\La^*$ and $u_k$ be the solution of $(P_{\la_k})$ obtained above. Moreover,
	\begin{align*}
	&I_{\la_k}(u_k)=\frac{1}{p}\int_\Om |\na u_k|^p+\ba\frac{1}{q}\int_{\Om}|\na u_k|^q -\frac{\la_k}{1-\de}\int_\Om |u_k|^{1-\de}~dx-\frac{1}{r}\int_\Om |u_k|^r~dx<0, \ \mbox{ and} \\
	&\|\na u_k\|_p^p+\ba\|\na u_k\|_q^q-\la_k\int_{\Om}|u_k|^{1-\de}~dx-\int_\Om |u_k|^r~dx=0,
	\end{align*}
	implies that $\{u_k\}$ is bounded in $W^{1,p}_0(\Om)$. Thus, there exists $u_{\La^*}\in W^{1,p}_0(\Om)$ such that $u_k\rightharpoonup u_{\La^*}$ weakly in $W^{1,p}_0(\Om)$ and $u_k(x)\ra u_{\La^*}(x)$ a.e. in $\Om$ (upto subsequence). By Lemma \ref{lemm18}, we have $u_{\La^*}\ge \ul u_{\la_1}> 0$ in $\Om$. Letting $k\ra \infty$ in the weak formulation of $(P_{\la_k})$ and using Lebesgue dominated convergence theorem, we get that $u_{\La^*}$ is a weak solution of $(P_{\La^*})$.\QED
\end{proof}
\textbf{Proof of Theorem \ref{thm5}:} Proof of the Theorem follows from Lemmas \ref{lemm16} and \ref{lemm17}.\QED

\subsection{Final remarks and perspectives}
(i) As it has been kindly suggested by one of the referees of this paper, we intend to continue and extend the analysis developed in this paper to singular double-phase problems with nonstandard growth of the type
$$\left\{\begin{array}{rllll}
   -{\rm div}\, (|\nabla u|^{p-2}+a(x)|\nabla u|^{p-2}\log (e+|\nabla u|)\nabla u) & = \la u^{-\de}+ u^{r-1} \quad  \text{ in } \Om \\   u&> 0 \quad \text{ in } \Om \\ u&=0 \quad \text{ on } \partial\Om.
\end{array}
\right.
$$

\smallskip
(ii) The same referee has also recommended to study further more general double-phase problems of the type
$$\left\{\begin{array}{rllll}
   -{\rm div}\, (|\nabla u|^{p-2}+a(x)|\nabla u|^{q-2}\nabla u) & = \la g(x,u)+ f(x,u) \quad  \text{ in } \Om \\  u&> 0 \quad  \text{ in } \Om \\ u&=0 \quad \text{ on } \partial\Om,
\end{array}
\right.
$$
where $0\leq a(\cdot)\in C^{0,\alpha}(\overline\Omega)$ and
$$\frac qp<1+\frac \alpha N\,.$$

The energy functional associated to this model contains the unbalanced variational integral
\begin{equation}\label{eenerg}u\mapsto \int_\Omega (|\nabla u|^p+a(x)|\nabla u|^q)dx.\end{equation}
The meaning of this functional is also to give a sharper version of the following energy
$$u\mapsto \int_\Omega |\nabla u|^{p(x)}dx,$$
thereby describing sharper phase transitions. 
In nonlinear elasticity and material science, composite materials with locally different hardening exponents $p$ and $q$ can be described using the energy defined in (\ref{eenerg}). Problems of this type are also motivated by applications to elasticity, homogenization, modelling of strongly anisotropic materials, Lavrentiev phenomenon, etc.

Accordingly, a new double phase model can be given by potentials of the form
$$\Phi_d(x,|\xi|):=\left\{
\begin{array}{lll}
& |\xi|^{p}+a(x)|\xi|^{q}&\quad\mbox{if}\ |\xi|\leq 1\\
& |\xi|^{p_1}+a(x)|\xi|^{q_1}&\quad\mbox{if}\ |\xi|\geq 1,
\end{array}\right.
$$
with $a(x)\geq 0$.

\smallskip
(iii) A new research direction corresponds to {\it anisotropic double-phase} operators with {\it singular} reaction. In this framework, we aim to develop the qualitative analysis performed in this paper to {\it singular} nonlinear boundary value problems with {\it variable exponents} of the type
\begin{equation}\label{aniso}\left\{\begin{array}{rllll}
   -{\rm div}\, \mathbf{A}(x,\nabla u) & = \la u^{-\de}+ u^{r-1}\\  u&> 0 \quad  \text{ in } \Om \\ u&=0 \quad \text{ on } \partial\Om,
\end{array}
\right.
\end{equation}
where
$$\mathbf{A}(x,\nabla
u)\mathbf{=}\left\{
\begin{array}{c}
\left\vert \nabla u\right\vert ^{p(x)-2}\nabla u\text{, if }\left\vert
\nabla u\right\vert >1 \\
\left\vert \nabla u\right\vert ^{q(x)-2}\nabla u\text{, if }\left\vert
\nabla u\right\vert \leq 1.
\end{array}%
\right. $$

This anisotropic model with unbalanced growth was introduced by Zhang and R\u adulescu \cite{zhang}. 

We conclude by pointing out that an important feature of nonlinear problems with {\it variable exponents} is that they can allow a ``subcritical-critical-supercritical" multiple regime, in the sense that $\Omega=\Omega_1\cup\Omega_2\cup\Omega_3$ and the problem is subcritical in $\Omega_1$, critical in $\Omega_2$ and supercritical in $\Omega_3$. We refer to Alves and R\u adulescu \cite{alves} for more details. A very interesting open problem corresponds to the analysis of the anisotropic singular case described by problem \eqref{aniso} in the multiple regime described above.

A related very interesting research direction corresponds to double-phase transonic flow problems  with variable growth driven by elliptic-hyperbolic  Baouendi-Grushin operators
with variable coefficients; see Bahrouni, R\u adulescu and Repov\v{s} \cite{bahr}.

\medskip
{\bf Acknowledgments.} The authors thank both anonymous referees for the careful reading of this paper and for their remarks  and  comments,  which  have   improved  the  initial  version  of  our  work.

\end{document}